\documentclass[reqno, 12pt, reqno]{amsart}

\usepackage{amsfonts, amsthm, amsmath, amssymb}
\usepackage{hyperref}
\hypersetup{colorlinks=false}

\usepackage[margin=1.5in]{geometry}

\RequirePackage{mathrsfs} \let\mathcal\mathscr

\numberwithin{equation}{section}

\newtheorem{theorem}{Theorem}[section]
\newtheorem{lemma}[theorem]{Lemma}

\newtheorem{prop}[theorem]{Proposition}

\theoremstyle{remark}
\newtheorem*{ack}{Acknowledgements}
\newtheorem{remark}[theorem]{Remark}

\theoremstyle{definition}
\newtheorem{defi}[theorem]{Definition}

\renewcommand{\d}{\mathrm{d}}
\renewcommand{\phi}{\varphi}
\renewcommand{\rho}{\varrho}

\newcommand{\0}{\mathbf{0}}

\newcommand{\PP}{\mathbb{P}}
\renewcommand{\AA}{\mathbb{A}}

\newcommand{\FF}{\mathbb{F}}
\newcommand{\ZZ}{\mathbb{Z}}

\newcommand{\NN}{\mathbb{N}}

\newcommand{\RR}{\mathbb{R}}

\newcommand{\TT}{\mathbb{T}}

\renewcommand{\leq}{\leqslant}

\renewcommand{\geq}{\geqslant}

\renewcommand{\bar}{\overline}

\newcommand{\h}{\mathbf{h}}

\newcommand{\g}{\mathbf{g}}

\newcommand{\x}{\mathbf{x}}

\renewcommand{\u}{\mathbf{u}}

\newcommand{\ve}{\varepsilon}

\DeclareMathOperator{\rank}{rank}

\DeclareMathOperator{\meas}{meas}

\usepackage[usenames,dvipsnames]{color}

\def \bfg { {\bf g}}
\def \bfh {{\bf h}}

\begin{document}

\title[Free rational curves and the circle method]{Free rational curves on low degree hypersurfaces and the circle method}

\author{Tim Browning}

\address{IST Austria\\
Am Campus 1\\
3400 Klosterneuburg\\
Austria}
\email{tdb@ist.ac.at}

\author{Will Sawin}
\address{Columbia University\\ 
Department of Mathematics\\
2990 Broadway\\ New York\\ NY 10027\\ USA}
\email{sawin@math.columbia.edu}

\subjclass[2010]{14H10 (11D45, 11P55, 14G05, 14J70)}

\begin{abstract}
We use a function field version of the Hardy--Littlewood circle method to 
study the locus of free rational curves  
on an arbitrary smooth projective 
hypersurface of sufficiently low degree.  On the one hand this  allows us to 
bound the dimension of the singular locus of the moduli space of rational curves on such hypersurfaces and, on the other hand, it sheds light on Peyre's reformulation of the Batyrev--Manin conjecture in terms of slopes with respect to the tangent bundle. 
\end{abstract}

\date{\today}

\maketitle

\thispagestyle{empty}
\setcounter{tocdepth}{1}
\tableofcontents

\section{Introduction}

Let $X\subset \PP^{n-1}$ be a smooth hypersurface of degree $d\geq 3$, over a field $K$
whose characteristic exceeds $d$ if it is positive.  
This paper has two aspects. On the one hand, motivated by questions in algebraic geometry,  we shall be interested in the locus  of points corresponding to free rational curves inside the 
  moduli space   $\mathcal M_{0,0}(X,e)$
of degree $e$ rational curves on $X$.  On the other hand, by working over a finite field, we shall 
establish 
a function field analogue of a recent conjecture  due to Peyre \cite{peyre-freedom} about 
the distribution of ``sufficiently free'' rational points of bounded height on Fano varieties.

\subsection{Geometry}
 
The expected dimension 
of $\mathcal M_{0,0}(X,e)$   is $(n-d)e+n-5$, a fact that is known to hold 
for generic $X$ if $n\geq d+3$, 
thanks to Riedl and Yang \cite{RY}. 
It follows from work of Browning and Vishe \cite{BV'} that
 $\mathcal M_{0,0}(X,e)$  
 is irreducible and 
 has the expected dimension 
for any smooth $X$, provided that  
$n> (5d-4)2^{d-1}.$ 
Our first result strengthens this.

\begin{theorem}\label{t:BV} 
Let $d\geq 3$, let $e\geq 1$ and let  $n>(2d-1) 2^{d-1}$. Then 
$\mathcal{M}_{0,0}(X,e)$ 
is an irreducible locally complete intersection of the expected dimension. 
\end{theorem}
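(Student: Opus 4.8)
The plan is to use the function field circle method to count degree $e$ rational curves on $X$ over a finite field $\FF_q$, and extract geometric information about the moduli space from the asymptotic. A degree $e$ map $\PP^1 \to \PP^{n-1}$ is given (up to scaling) by an $n$-tuple of binary forms of degree $e$, i.e.\ a point of $\AA^{n(e+1)}$, and the condition that the image lies on $X$ translates into the vanishing of a single form of degree $d$ in these $n(e+1)$ coordinates; more precisely, substituting the $n$ forms into the defining equation $F$ of $X$ produces a binary form of degree $de$ in the variables on $\PP^1$, whose $de+1$ coefficients must all vanish. So the relevant affine variety is cut out by $de+1$ equations in $\AA^{n(e+1)}$, of expected dimension $n(e+1) - (de+1) = (n-d)e + n - 1$, which matches the expected dimension of $\mathcal{M}_{0,0}(X,e)$ after subtracting $4$ for the $\mathrm{PGL}_2$ action and the scaling. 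First I would set up this incidence variety carefully and reduce the theorem to showing it is an irreducible local complete intersection of the expected dimension, away from a locus of small dimension that can be handled separately (e.g.\ the degenerate maps, or maps into a linear subspace).

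Next I would invoke the function field circle method, as developed in the Browning--Vishe framework, to count $\FF_q[t]$-points of this incidence variety of bounded degree. The key output is an asymptotic formula for the number of such points with a power-saving error term, valid in the stated range $n > (2d-1)2^{d-1}$; the main term is a product of local densities (a singular integral times a singular series) and is positive. The improvement over the range $n > (5d-4)2^{d-1}$ in \cite{BV'} should come from a sharper treatment of the minor arcs — presumably a more efficient Weyl differencing or a better bound on the relevant exponential sums over $\FF_q[t]$, exploiting that one only needs the count up to the precision required to detect the dimension and irreducibility rather than a full Manin-type asymptotic. I would then combine the point count over $\FF_q$ with the Lang--Weil estimates (and their refinements that detect irreducibility and the number of top-dimensional components) applied uniformly over finite fields $\FF_{q^k}$: matching the leading power of $q$ forces the dimension to equal the expected value, and the leading constant being exactly $1$ forces geometric irreducibility. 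That the variety is a local complete intersection then follows because it is cut out by the expected number $de+1$ of equations and has the expected dimension everywhere.

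There is a characteristic-zero-to-positive-characteristic spreading-out step: the theorem is stated over an arbitrary field $K$ of characteristic $0$ or $> d$, so after establishing the result over $\overline{\FF_q}$ for all sufficiently large $q$, I would spread $X$ out over a finitely generated $\ZZ$-algebra and use constructibility of the locus where a morphism has irreducible fibres of a given dimension that are local complete intersections, together with the fact that this locus, being constructible and containing all large-characteristic fibres, must contain the generic (characteristic zero) fibre as well. Finally, one passes from $\overline{\FF_q}$ or $\overline{\QQ}$ back to an arbitrary $K$ by flat base change, since all the properties in question (irreducibility, dimension, being a local complete intersection) are insensitive to further field extension once they hold over the algebraic closure of the prime field in $K$.

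The main obstacle I expect is twofold. First, extending the circle method to handle the full range $n > (2d-1)2^{d-1}$ requires genuinely sharper minor-arc estimates than in \cite{BV'}; getting the right van der Corput / Weyl differencing bound for the exponential sums attached to $F$ evaluated on tuples of binary forms, with explicit and optimal dependence on $d$, is the technical heart. Second, one must control the ``bad'' locus — maps whose image is contained in a hyperplane, or multiply-covered curves, or maps where the Jacobian criterion fails — and show it has dimension strictly smaller than the expected dimension, so that it does not interfere with the count or with the identification of the main component; this requires a separate, more elementary dimension-counting argument that nonetheless needs the same lower bound on $n$ to be effective.
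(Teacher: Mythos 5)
Your overall architecture is exactly the paper's: reduce to counting $n$-tuples $\g\in\FF_q[T]^n$ of degree $\le e$ with $f(\g)=0$, show via the function field circle method that this count is at most $(1+o(1))q^{(n-d)e+n-1}$ as $q\to\infty$, deduce irreducibility and expected dimension from Lang--Weil (the paper invokes the criterion \cite[Eq.~(3.3)]{BV'} which packages this), obtain the locally complete intersection property from the fact that the space is cut out by $de+1$ equations in the smooth stack $\mathcal{M}_{0,0}(\PP^{n-1},e)$ of dimension $ne-4$ once the dimension is known, and spread out to arbitrary $K$. Your dimension bookkeeping ($n(e+1)-(de+1)$, minus $4$ for scaling and $\mathrm{PGL}_2$) is also correct, and your worry about a separate ``bad locus'' is unnecessary: since one only needs an upper bound of $1$ on the limit, overcounting degenerate tuples is harmless, and the lower bound on every component's dimension is automatic from the equation count.

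There is, however, a genuine gap at the step you yourself flag as the technical heart, and your guess at how to fill it points in the wrong direction. The improvement from $n>(5d-4)2^{d-1}$ in \cite{BV'} to $n>(2d-1)2^{d-1}$ does \emph{not} come from a more efficient Weyl differencing scheme, nor from needing less precision than a Manin-type asymptotic (\cite{BV'} already needed only this precision for exactly this statement). The Weyl differencing in the paper is the same $d-1$-fold differencing as in \cite{BV'}, leading to the counting function $N(\alpha,r)$ of \eqref{eq:Nar}. The actual source of the saving is a refinement of the geometry-of-numbers ``shrinking lemma'': Lemma \ref{new-geometry} exploits that the matrix $\gamma$ of multilinear forms is \emph{symmetric}, so that the lattice $\Lambda_{a,c}$ is, up to units, proportional to its own adjoint; the resulting duality $\hat R_i=q^{c-a}/\hat R_{2n+1-i}$ among successive minima shows that only the top $n$ (rather than all $2n$) minima can contribute the dilation factor, replacing an exponent $n\max(a-c,0)$ by $n\max(\lfloor\frac{a-c}{2}\rfloor,0)$. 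It is precisely this halving, fed through Lemmas \ref{lem:shrink} and \ref{lem:N1} into the minor arc bound, that converts the constant $5d-4$ into $2d-1$. Without this (or an equivalent) input your argument only recovers the range of \cite{BV'}, so the proposal as written does not prove the theorem in the stated range.
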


We can also bound  the dimension of the singular locus of $\mathcal M_{0,0}(X,e)$, 
as follows.

\begin{theorem}\label{t:starr} 
Let $d\geq 3$, let $e \geq 1 $ and let
$
n> 3 (d-1)  2^{d-1}.$ 
Then the 
space $\mathcal{M}_{0,0}(X,e)$ is smooth outside a set of codimension at least
$$  \left(\frac{n}{2^{d-2}}-6(d-1)\right) \left( 1 + \left \lfloor \frac{e+ 1}{d-1} \right\rfloor \right) .$$
In particular, whenever these inequalities are satisfied, it is generically smooth and reduced.
\end{theorem}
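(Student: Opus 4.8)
The plan is to derive the theorem from a function-field circle-method count of the \emph{non-free} rational curves on $X$, after a chain of standard reductions. First I would note that the dimension of the singular locus, as well as generic smoothness and generic reducedness, are unchanged under extension of $K$ and so may be checked over $\bar K$; choosing a model of $X$ over a finitely generated subring of $K$ and reducing modulo a suitable place (all but finitely many work, and one can keep the residue characteristic exceeding $d$, as required), it suffices to treat $K=\FF_q$. Since any variety $Y$ over $\FF_q$ with $\#Y(\FF_{q^k})\ll q^{km}$ for all $k\ge 1$, the implied constant being independent of $k$, has every component of dimension $\le m$, and since $\mathcal{M}_{0,0}(X,e)$ is irreducible of dimension $(n-d)e+n-5$ by Theorem~\ref{t:BV}, the theorem follows from
\[
\#\bigl\{\mathbf{f}\in\mathrm{Mor}_e(\PP^1,X)(\FF_{q^k}):f\text{ non-free}\bigr\}\ \ll\ q^{k\left((n-d)e+n-1-B\right)},\qquad B:=\left(\frac{n}{2^{d-2}}-6(d-1)\right)\left(1+\left\lfloor\frac{e+1}{d-1}\right\rfloor\right),
\]
uniformly in $k$. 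Here I use that, again by Theorem~\ref{t:BV}, $\mathrm{Mor}_e(\PP^1,X)$ is the locally complete intersection in $\AA^{n(e+1)}$ cut out by the $de+1$ coefficients of $F(\mathbf{f})$, of dimension $(n-d)e+n-1$, and that the forgetful map $\mathrm{Mor}_e(\PP^1,X)\to\mathcal{M}_{0,0}(X,e)$ is smooth and surjective, so it identifies singular loci and preserves codimensions.

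Next I would reduce the singular locus to the non-free locus. At a point $\mathbf{f}=(f_1,\dots,f_n)$ the scheme $\mathrm{Mor}_e(\PP^1,X)$ fails to be smooth exactly when the Jacobian map $(g_i)_i\mapsto\sum_i g_i\,\partial_i F(\mathbf{f})$ from $(\FF_q[s,t]_e)^n$ to $\FF_q[s,t]_{de}$ is not surjective; combining the Euler sequence of $\PP^{n-1}$ with the conormal sequence of $X$ identifies the cokernel of this map with $H^1(\PP^1,f^{*}T_X)$. Writing $f^{*}T_X\cong\bigoplus_i\mathcal{O}(a_i)$, non-surjectivity forces some $a_i\le-2$, so in particular $f^{*}T_X$ is not globally generated; conversely the free locus, where all $a_i\ge0$, is smooth. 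Hence it suffices to bound the codimension of the non-free locus from below by $B$.

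This last bound is the heart of the matter, and I expect it to be the main obstacle. The point of departure is that non-freeness becomes \emph{more} restrictive as the degree grows: the defect is measured by the graded pieces of the ideal $\bigl(\partial_1 F(\mathbf{f}),\dots,\partial_n F(\mathbf{f})\bigr)\subseteq\FF_q[s,t]$, and for a non-free curve the codimension of its degree-$m$ piece stays at least $de-m$ throughout $(d-1)e\le m\le de-1$ — a window of length $\asymp e$ — which refines $h^1(\PP^1,f^{*}T_X(-1))>0$. A single circle-method application to this would only yield a codimension of order $\tfrac{n}{2^{d-2}}$ minus an $O(e)$ loss, which degrades for large $e$; so I would instead run a d\'evissage that replaces $e$ by $e-(d-1)$ — realized via the geometry of gluing on a general rational curve of degree $d-1$ and tracking how the splitting type changes — so that iteration produces $1+\lfloor(e+1)/(d-1)\rfloor$ nested conditions, each of ``size $d-1$''. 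At each stage, after fixing $q^{O(d)}$ admissible pieces of auxiliary linear data witnessing the relevant drop of rank (e.g. a functional, or equivalently a low-complexity point of a secant variety of bounded dimension), the surviving condition is the solvability of an augmented polynomial system in the $n(e+1)$ coordinates of $\mathbf{f}$: the equations $F(\mathbf{f})=0$ together with a nontrivial collection of linear relations among the products $g\,\partial_i F(\mathbf{f})$, relations of degree $d-1$ in those coordinates. I would estimate the number of $\FF_{q^k}$-points of this system using the paper's function-field version of Birch's circle method; the Weyl-differencing (minor-arc) bound for a form of degree $d-1$ in $N$ variables gives a power saving of order $N\cdot 2^{\,2-d}$, which here (with $N\asymp n$) produces a codimension gain of $\tfrac{n}{2^{d-2}}-O(d)$ per stage, and the constant $6(d-1)$ is calibrated to absorb all the $O(d)$ losses — the number of equations in the augmented system, the deficit in Birch's $h$-invariant coming from its singular locus, the archimedean and congruence densities, and the $q^{O(d)}$ cost of the auxiliary choices. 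The gains of the $1+\lfloor(e+1)/(d-1)\rfloor$ stages add up to $B$, and since the circle-method constants depend only on $X,d,n,e$ and not on $q$, we may let $k\to\infty$.

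The step I would expect to fight hardest for is verifying that imposing the non-freeness conditions leaves each augmented system non-degenerate enough for the minor-arc estimate to deliver the full saving — uniformly over \emph{all} smooth $X$, where the Hessian of $F$ can be highly degenerate — which requires a genuinely Birch-style analysis of the singular locus of the augmented system (presumably exploiting that the affine cone over $X$ is smooth away from its vertex), together with bookkeeping that keeps the d\'evissage overhead $O(d)$ per stage rather than accumulating with $e$. Granting this, the theorem closes formally: the hypothesis $n>3(d-1)2^{d-1}$ gives $\tfrac{n}{2^{d-2}}>6(d-1)$, hence $B>0$, so the non-free locus — and with it the singular locus — is a proper closed subset of the irreducible scheme $\mathcal{M}_{0,0}(X,e)$; its generic point is therefore smooth, and irreducibility then forces it to be generically reduced.
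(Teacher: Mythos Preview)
Your reductions are sound and match the paper: spreading out to a finite field, passing to $\mathrm{Mor}_e(\PP^1,X)$, and identifying the singular locus with the locus of curves that are not $(-1)$-free via $H^1(\PP^1,c^*\mathcal T_X)$. From this point on, however, your proposal diverges from the paper's argument and contains a genuine gap.

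You assert that ``a single circle-method application \dots\ would only yield a codimension of order $n/2^{d-2}$ minus an $O(e)$ loss'', and on that basis propose an iterative d\'evissage in which $e$ is replaced by $e-(d-1)$ via gluing a degree-$(d-1)$ rational curve, repeated $1+\lfloor (e+1)/(d-1)\rfloor$ times. This premise is incorrect, and the d\'evissage is neither needed nor, as you describe it, viable: there is no mechanism that turns ``track how the splitting type changes under gluing'' into a codimension bound on the non-free locus inside the \emph{original} moduli space, and the ``augmented systems'' you would have to control at each stage are never pinned down.

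The paper instead performs a \emph{single} circle-method computation that already captures the full factor $1+\lfloor(e+1)/(d-1)\rfloor$. The trick is to linearise non-freeness by introducing an auxiliary vector $\h$ of polynomials of degree $\le e-1-\rho$ (here $\rho=-1$) and counting solutions to the pair
\[
f(\g)=0,\qquad \h\cdot\nabla f(\g)=0.
\]
By Riemann--Roch on $c^*\hat{\mathcal T}_X$, the $\h$-fibre over a $\rho$-free curve has exactly the expected size, and is strictly larger otherwise; hence the excess of the paired count $N_\rho(q,e,f)$ over $q^{e(n-d)-\rho(n-1)}N(q,e,f)$ bounds $\#Z_\rho(\FF_q)$ (this is Proposition~\ref{unfree-bound}). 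One then runs the circle method with \emph{two} characters $(\alpha,\beta)$ detecting the two equations. The major arcs reproduce the expected term exactly. On the minor arcs one exploits the bilinear shape $\beta\,\h\cdot\nabla f(\g)$: after $d-2$ Weyl differencings in $\g$ and one Cauchy--Schwarz bringing the $\h$-sum inside, one reaches a lattice-point count governed by $\beta$ alone, to which a refined shrinking lemma (Lemma~\ref{new-geometry}) applies. It is precisely this shrinking step, with parameter $l_2=1+\lfloor K/(d-1)\rfloor$ and $K\ge e-\rho$, that produces the factor $1+\lfloor(e-\rho)/(d-1)\rfloor$ in one stroke; no iteration occurs. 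Setting $\rho=-1$ then gives Theorem~\ref{t:starr} as a direct corollary of Theorem~\ref{t:1}.

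In short, the missing idea is the auxiliary-variable reformulation $(\g,\h)$ together with the two-character circle method; once you have it, the $\lfloor(e+1)/(d-1)\rfloor$ comes from geometry of numbers, not from a degree d\'evissage.
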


For $n\geq 2d+1$ and  generic $X$ of degree $d\geq 3$,   Harris, Roth and Starr  \cite{HRS} have also  shown that  $\mathcal M_{0,0}(X,e)$ is generically smooth. 
Note that, provided  $n> 3 (d-1) 2^{d-1}$, the codimension goes to $\infty$ 
in Theorem \ref{t:starr}
when either $e$ or $n$ does, with $d$ fixed. Moreover, when both $e$ and $n$ are large with respect to $d$, the codimension is at least approximately $\frac{1}{ 2^{d-2} (d-1)}$ of the total dimension.  

Our work 
addresses some questions of Eisenbud and Harris \cite[\S 6.8.1]{3264}
concerning the Fano variety of lines  $F_1(X)=\mathcal M_{0,0}(X,1)$ associated to 
a smooth hypersurface
$X\subset \PP^{n-1}$  of degree $d$.
Specifically, their question (a) asks whether  $F_1(X)$ is reduced and irreducible if $n> d+1$ and
(b) asks  whether the dimension of the singular locus of $F_1(X)$ can be bounded in terms of $d$ alone. 
Theorems \ref{t:BV} and \ref{t:starr} answer the first question  affirmatively for $n>3(d-1)2^{d-1}$ and 
give some weak evidence in support of the second question, by showing that it grows with $n$  more slowly than the dimension of the whole space. Furthermore, we handle the analogous conjectures with higher degree curves, with no loss in the dependence on $n$, meaning that for large enough
 $e$ we do better than their predicted bound $d \leq n/e$. 
 
By comparison, Starr \cite{Starr} has  proved that if $n\geq d+e$ and $X$ is generic, then  $\mathcal M_{0,0}(X,e)$ has canonical singularities, which implies in particular that it is smooth outside a set of codimension at least $2$. 
It does not seem possible that our method will prove that $\mathcal M_{0,0}(X,e)$ has canonical singularities. By 
Mustat\v{a} \cite{Mustata} and Lang-Weil \cite{LW} this  is 
equivalent to the conjunction of an infinite sequence of Diophantine estimates (in the spirit of Definition \ref{def:Nrho}),  
but for fixed $n,d$ and $e$ it seems unlikely that the circle method is able to handle 
more than finitely many of them. 
In unpublished work, Starr and Tian use a bend-and-break approach to produce a less restrictive lower bound for the codimension of the singular locus for a general hypersurface $X\subset \PP^{n-1}$ 
of degree $d$. However, their method never proves a lower bound for the codimension greater than $n$, whereas our work achieves this if $e$ is sufficiently large. 
 
 Comparing the various results, we see that Theorem \ref{t:starr}  holds for a much more restricted range of $n$ (unless $e$ is very large relative to $d$) but it is valid for an arbitrary smooth hypersurface, rather than just a general one. 
 
 It should be possible to adapt our strategy to prove results about moduli spaces of genus $g$ curves on $X$. However, the codimension we obtain for the whole moduli space will not be any better than the codimension we can prove for the space of maps from a fixed genus $g$ curve to $X$. 
 In particular the codimension  will shrink  as  $g$ grows, 
 so the bound obtained would only be suitable for $e$ sufficiently large with respect to $g$. 

Let $\mathcal T_X$ be the tangent bundle associated to the smooth hypersurface $X\subset \PP^{n-1}$ (as defined in \cite[p.~180]{hart}, for example). 
Our remaining result deals specifically with free curves and so we recall the definition here.

\begin{defi} \label{def-free}
Let $c: \mathbb P^1 \to X$ be a rational curve and let $\rho\in \ZZ$. We 
 say that $c$ is $\rho$-free if $c^* \mathcal T_X \otimes \mathcal O_{\mathbb P^1}(-\rho)$ is globally generated.
\end{defi} 

We shall follow common convention and say  that $c$ is free if it is $0$-free, and very free if it is $1$-free. One easily checks that this  agrees with the standard definition that $c$ is free if $c^*\mathcal T_X$ is globally generated and very free if $c^* \mathcal T_X$ is ample.
The definition of free curves goes back to pioneering work of Koll\'ar--Miyaoka--Mori \cite[\S~1]{KMM'} on rational connectedness for Fano varieties, and they feature heavily  in work of 
Koll\'ar \cite[\S~II.3]{kollar}. 
We have taken Definition \ref{def-free} from work of Debarre \cite[Def.~4.5]{Debarre}, 
which appears to be the first time that the notion  of being $\rho$-free occurs, for varying $\rho\in \ZZ$.
 
\begin{remark}\label{rem:1.4}
If $c$ is a $\rho$-free  rational curve on $X$ then it follows from Definition~\ref{def-free} that 
$\deg(c^*\mathcal{T}_X)\geq \rank (c^* \mathcal{T}_X) \rho$. 
In general, the pull-back of the tangent bundle has rank $n-2$ and degree $e (n-d)$. In this way we see that no degree $e$ rational curve on $X$ is ever $(\lfloor \frac{ e(n-d)}{n-2} \rfloor +1)$-free. If $d\geq 2$ then this implies that $\rho\leq e$, for any $\rho$-free rational curve $\PP^1\to X$.
 \end{remark}

We let $U_\rho\subset 
\mathcal{M}_{0,0}(X,e)$ 
be the Zariski open set  
that parameterises  degree $e$ maps from $\mathbb P^1$ to $X$ that are  $\rho$-free. We write 
  $Z_\rho=\mathcal{M}_{0,0}(X,e)\setminus U_\rho$ for the complement.
This is the closed set parameterising 
degree $e$ maps $\mathbb P^1\to X$ that are not $\rho$-free.
We shall prove the following bound for its dimension.
 
\begin{theorem}\label{t:1}
Let $d\geq 3$ and $n>3(d-1)2^{d-1}$.
Assume that $\rho\geq -1$  and 
\begin{equation}
\label{eq:e-condition}
e\geq  (\rho+1) \left(2 + \frac{1}{d-2} \right ).
\end{equation}
Then 
\begin{equation}
\begin{split}
\label{eq:main-bound} \dim Z_\rho\leq~& 
(n-d)e+n-5  
+ 2(d-1)\left\lfloor \frac{\rho+1}{2}\right\rfloor
\\&-\left(\frac{n}{2^{d-2}}-6(d-1)\right)
\left(1+
\left\lfloor\frac{e-\rho}{d-1}\right\rfloor
- \left\lfloor \frac{\rho+1}{2}\right\rfloor\right). 
\end{split}
\end{equation}
\end{theorem}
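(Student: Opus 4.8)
The plan is to convert the dimension bound into a point count over finite fields and to extract that count from the function field circle method. I would first reduce, by a standard specialisation argument, to the case $K=\FF_q$ for a suitable fixed prime power $q$, with $X$ cut out by a smooth degree $d$ form $F$ in $n$ variables; then, by the Lang--Weil estimates \cite{LW}, it suffices to bound $\#Z_\rho(\FF_{q^k})$ as $k\to\infty$ with a bound that is uniform over \emph{all} smooth $X$ of the given $n$ and $d$ — this uniformity is exactly what lets one conclude over an arbitrary base field. A degree $e$ morphism $\PP^1\to X$ is a primitive $n$-tuple $\mathbf f=(f_1,\dots,f_n)$ of binary forms of degree $e$ with $F(\mathbf f)=0$, taken modulo the $4$-dimensional group generated by rescaling and $\operatorname{Aut}(\PP^1)=\mathrm{PGL}_2$, and by Theorem \ref{t:BV} (applicable since $3(d-1)>2d-1$) the affine cone over $\mathcal M_{0,0}(X,e)$ in $\AA^{n(e+1)}$ is irreducible of dimension $(n-d)e+n-1$. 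Writing $D$ for the right-hand side of \eqref{eq:main-bound}, the target is then
\[
\#\bigl\{\mathbf f\in(\FF_{q^k}[s,t]_e)^{n}:\ \mathbf f\ \text{primitive},\ F(\mathbf f)=0,\ c_{\mathbf f}\in Z_\rho\bigr\}=O_{d,e,\rho}\!\left(q^{k(D+4)}\right)\quad(k\ge1).
\]

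\textbf{Making $\rho$-freeness Diophantine.} Next I would turn membership in $Z_\rho$ into an explicit condition on $\mathbf f$. By Serre duality on $\PP^1$, $c_{\mathbf f}$ is not $\rho$-free iff $H^1(\PP^1,c_{\mathbf f}^*\mathcal T_X(-\rho-1))\neq0$, iff $H^0(\PP^1,c_{\mathbf f}^*\Omega_X(\rho-1))\neq0$, iff $c_{\mathbf f}^*\mathcal T_X$ admits a quotient line bundle of degree at most $\rho-1$. Pulling the Euler sequence of $\PP^{n-1}$ and the conormal sequence of $X$ back along $c_{\mathbf f}$, twisting, and using that in the range \eqref{eq:e-condition} all the line bundles involved have vanishing $H^1$, this becomes the statement that the multiplication map
\[
\Psi_{\mathbf f}\colon\ \bigoplus_{i=1}^{n}H^0\bigl(\mathcal O_{\PP^1}(e-\rho-1)\bigr)\ \longrightarrow\ H^0\bigl(\mathcal O_{\PP^1}(de-\rho-1)\bigr),\qquad (g_i)_i\longmapsto\sum_{i=1}^{n}g_i\,\partial_i F(\mathbf f),
\]
fails to be surjective (equivalently, a dual linear system has a nonzero solution). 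Thus $Z_\rho$ sits inside a degeneracy locus cut out by minors that are polynomial in the coefficients of $\mathbf f$ and in the $\partial_i F(\mathbf f)$ — precisely the type of auxiliary condition the circle method can carry along.

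\textbf{Stratify and count.} I would then stratify $Z_\rho$ according to the ``location'' of the destabilising quotient: an effective divisor on $\PP^1$ of degree $\lfloor(\rho+1)/2\rfloor$ recording where it is concentrated, together with a residual curve of degree essentially $e-\rho$. Summing over the polynomially many choices of this data is responsible for the positive term $2(d-1)\lfloor(\rho+1)/2\rfloor$ and for the correction $-\lfloor(\rho+1)/2\rfloor$ to the main saving. On each stratum the remaining $\mathbf f$ are counted by the circle method, via a family of Diophantine estimates — valid for every smooth $X$ once $n>3(d-1)2^{d-1}$, which is where that hypothesis enters, and which are of the kind indicated by Definition \ref{def:Nrho} — asserting that imposing $r$ independent extra linear conditions on a degree $e$ curve on $X$ multiplies the count by a factor $q^{-kr(\frac{n}{2^{d-2}}-6(d-1))}$. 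One checks that the destabilising data supplies exactly $r=1+\lfloor(e-\rho)/(d-1)\rfloor-\lfloor(\rho+1)/2\rfloor$ such conditions, and the hypothesis \eqref{eq:e-condition} is precisely what guarantees $r\ge1$ (in particular that the residual curve has positive degree, so that the estimate applies to it). Assembling the strata and passing back through Lang--Weil then yields \eqref{eq:main-bound}.

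\textbf{The main obstacle.} The hard part will be the circle-method estimate itself: running the major/minor arc analysis for $F(\mathbf f)=0$ in the $\approx n(e+1)$ variables that are the coefficients of $\mathbf f$, differencing $d-1$ times, and bounding the resulting exponential sums \emph{uniformly over all smooth} $X$ with a saving strong enough to cross the threshold $n>3(d-1)2^{d-1}$ — and doing so with the destabilising linear conditions switched on, so that they genuinely multiply the saving by $r$ rather than being swallowed by the error term. By comparison the remaining ingredients — that $r$ independent conditions are actually available, that the destabilising data costs only $2(d-1)\lfloor(\rho+1)/2\rfloor$ in dimension, the cohomology vanishing that holds exactly in the range \eqref{eq:e-condition}, and the bookkeeping of the $\mathrm{PGL}_2$- and scaling-quotients — should be routine, but need to be carried through with care.
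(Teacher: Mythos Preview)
Your overall frame — reduce to finite fields, turn ``not $\rho$-free'' into a Diophantine condition, count by the circle method, then Lang--Weil and spreading out — matches the paper. Your Diophantine translation is also essentially the right one: the map $\Psi_{\mathbf f}$ is the map whose kernel is $H^0(\PP^1,c_{\mathbf f}^*\hat{\mathcal T}_X(-\rho-1))$ (Lemma~\ref{polynomial-expression}), and by Lemma~\ref{dimension-free} this kernel is bigger than its generic value exactly on the locus of not strongly $\rho$-free curves, which contains $Z_\rho$.

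The gap is in what you do next. There is no stratification by a ``destabilising divisor of degree $\lfloor(\rho+1)/2\rfloor$'' in the actual argument, and I do not see how to make one work: the quantity $\lfloor(\rho+1)/2\rfloor$ is not the degree of any natural divisor attached to a non-free curve, and the claimed black box ``$r$ extra linear conditions on $\mathbf f$ save a factor $q^{-r(n/2^{d-2}-6(d-1))}$'' is not a known circle-method estimate. Standard Birch on $F(\mathbf f)=0$ with side conditions would at best give savings of size $n/2^{d-1}$ per step, which is too weak for the threshold $n>3(d-1)2^{d-1}$; the extra factor of $2$ in the exponent $n/2^{d-2}$ is the whole point and does not come for free.

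What the paper does instead is avoid stratifying $Z_\rho$ altogether. One counts \emph{pairs} $(\g,\h)$ with $f(\g)=0$ and $\h\cdot\nabla f(\g)=0$; summing over $\g$ this is $\sum_\g q^{\dim\ker\Psi_\g}$, which equals $q^{e(n-d)-\rho(n-1)}N(q,e,f)$ plus an excess that dominates $(q-1)$ times the number of not-strongly-$\rho$-free $\g$ (Proposition~\ref{unfree-bound}(2)). One then runs a two-phase circle method on the full system with characters $\psi(\alpha f(\g)+\beta\,\h\cdot\nabla f(\g))$. The major arcs in $\beta$ reproduce $q^{e(n-d)-\rho(n-1)}N(q,e,f)$ exactly (Lemma~\ref{lem:pebble}), and on the minor arcs one has two independent Weyl bounds: the usual $(d-1)$-fold differencing in $\g$ that sees only $\alpha$, and a $(d-2)$-fold differencing followed by Cauchy--Schwarz and execution of the linear $\h$-sum, which sees only $\beta$ (equation~\eqref{eq:weyl2'}). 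It is this second bound, with one fewer differencing, that produces the exponent $n/2^{d-2}$. The term $\lfloor(\rho+1)/2\rfloor$ then appears not geometrically but analytically, from the refined shrinking lemma (Lemma~\ref{new-geometry}) applied with the lopsided parameters $(a,c)=(e-j+1,e-\rho)$, where the penalty $\max(0,\lfloor(a-c)/2\rfloor)=\max(0,\lfloor(\rho-j+1)/2\rfloor)$ is exactly the loss you are trying to account for. The factor $1+\lfloor(e-\rho)/(d-1)\rfloor$ is the least value of the shrinking parameter $l_2$ forced by the minor-arc condition $K\geq e-\rho$, and the condition \eqref{eq:e-condition} is one of two inequalities (see \eqref{eq:final-e}) needed to make the shrinking lemma applicable, the other being rendered harmless by Theorem~\ref{t:BV}.

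In short: keep your reduction and your reformulation of freeness, but replace the stratification-plus-linear-conditions plan by the pair-counting trick and the two-phase Weyl analysis; the $\lfloor(\rho+1)/2\rfloor$ will then fall out of the geometry of numbers rather than from any moduli of divisors.
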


The notion of free rational curves was originally introduced as a tool to study uniruled and rational connectedness properties of varieties. Taking $\rho=1$  it follows from Theorems \ref{t:BV} and \ref{t:1} that $U_1\neq \emptyset$ if $K$ is algebraically closed and $e$ is sufficiently large.  Hence, by appealing to  \cite[Cor.~4.17]{Debarre}, we deduce that any smooth hypersurface $X\subset  \PP^{n-1}$ of degree $d$ is 
rationally connected if $d\geq 3$ and 
$n>  3(d-1) 2^{d-1} $. 
This recovers a weak form of 
the well-known result, independently  
due 
to Campana \cite{campana} and 
Koll\'ar--Miyaoka--Mori \cite{KMM} 
that Fano varieties  are rationally connected. 
In fact  both proofs use reduction to characteristic $p$, but they use different properties of characteristic $p$ varieties, with   \cite{KMM} relying on  Frobenius pull-back and our work using  the Lang--Weil estimates.

\medskip

Theorem \ref{t:starr} is derived from  
Theorem  \ref{t:1}, which is proved using analytic number theory and builds on an approach 
employed by Browning and Vishe \cite{BV'}.  (Theorem \ref{t:BV} uses essentially the same approach as \cite{BV'}, with one improvement to a key lemma.) 
One  begins by working over a finite field $K=\FF_q$ of characteristic $>d$. We bound the dimension of $Z_\rho$ by counting the  number of  points defined over a finite extension of $\FF_q$
that lie in it.  In \S \ref{geometry-diophantine}, we will give an explicit description of this locus in terms 
of a system of two  Diophantine equations defined over the function 
field $\mathbb F_q(T)$.  Let $f\in \FF_q[x_1,\dots,x_n]$ be a non-singular form of 
degree $d$ that defines the hypersurface $X\subset \PP^{n-1}$. 
Given $\rho\in \ZZ$, we shall see that the  primary counting function of interest to us, denoted 
$N_\rho(q,e,f)$, is  the one that counts vectors  $(\g,\h)\in \FF_q[T]^{2n}$, where
$g_1,\dots,g_n$ have degree at most $e$ and no common zero, 
 with at least  one of degree exactly $e$, 
 and where $h_1,\dots,h_n$ have degree at most $e-1 - \rho$, such that 
\begin{equation}
\label{eq:system}
f(g_1,\dots,g_n)=0
\quad \text{
 and }\quad 
 \sum_{i=1}^n h_i \frac{\partial f}{\partial x_i}(g_1,\dots,g_n)=0.
 \end{equation}
 Since each partial derivative of $f$ is a degree $d-1$ polynomial,
 we obtain a linear equation for $\h\in \FF_q[T]^n$ 
where the coefficients have degree at most $(d-1)e$ in $T$.
Standard heuristics lead us to expect that, for typical $\g$, the number of available 
$\h$ is $q^{(e-\rho)(n-1)-(d-1)e}=q^{e(n-d)-\rho(n-1)}$. (In fact, we shall see in Lemmas \ref{dimension-free} and \ref{polynomial-expression} that this is true  only if the map $\PP^1\to X$ represented by 
$\g$ is $\rho$-free.) 
Thus we expect that $N_\rho(q,e,f)$ is approximated by  $q^{e(n-d)-\rho(n-1)}N(q,e,f)$, where 
 $N(q,e,f)$ is the number of vectors $\g\in \FF_q[T]^n$ such that $f(\g)=0$,  where
$g_1,\dots,g_n$ have degree at most $e$ and no common zero, 
 with at least one of degree exactly $e$.

In \S \ref{diophantine-exponential}, we apply the function field version of the Hardy--Littlewood circle method to study the system of degree $d$   equations \eqref{eq:system}, expressing the number of solutions as an integral of an exponential sum. We shall show that the major arc contribution to this integral cancels almost exactly with the expected approximation
$q^{e(n-d)-\rho(n-1)}N(q,e,f)$. 
In \S \ref{s:minor}, we prove an upper bound on all other arcs, taking special care to make 
all of our implied constants depend explicitly  on the size of the finite field $q$.
The standard way of proceeding  involves $d-1$ applications of Weyl differencing, a process
that  would ultimately 
 require 
$n>3(d-1)2^{d}$  variables overall. We shall gain a 50\% reduction in the number of  variables
 by exploiting the special shape of the Diophantine system \eqref{eq:system}.
Finally,  we  bring everything together and apply the Lang--Weil estimates \cite{LW} to turn the bound for  $\#Z_\rho(\FF_q)$ into a bound for the dimension of $Z_\rho$. An application of    spreading-out shows that the dimension bound holds over an arbitrary base field $K$ such that $\mathrm{char}(K)>d$ if it is positive.

\subsection{Arithmetic}

In our  geometric investigation of $Z_\rho$ we take the point of view that $e$ and $\rho$ are fixed and $q\to \infty$. In this subsection we assume that the finite field is fixed, but we allow  the parameters $e$ and $\rho$ to tend to infinity appropriately.

Suppose  that $V$ is a  smooth projective geometrically integral Fano variety 
defined over a number field $K$. 
For suitable Zariski open subsets $U\subset V$ the Batyrev--Manin conjecture \cite{FMT89}
makes a precise prediction about the asymptotic behaviour of the counting function 
$$
N_U(B)=\#\{x\in U(K): H_{\omega_V^{-1}}(x)\leq B\},$$ 
as $B\to \infty$, where $H_{\omega_V^{-1}}:V(K)\to \RR$ is an anticanonical height function. These conjectures are flawed, however, since  it has been discovered 
that the presence of Zariski dense thin sets in $V(K)$ may skew the expected asymptotics. Recently, Peyre \cite{peyre-freedom}  has embarked on an ambitious programme to repair the conjecture by associating a measure of ``freeness'' $\ell(x)\in [0,1]$ to any $x\in V(K)$ and only counting those rational points for which $\ell(x)\geq \ve_B$, where $\ve_B$ is a function of $B$ decreasing to zero sufficiently slowly. (See \cite[Def.~6.11]{peyre-freedom} for a precise statement.) Peyre's function $\ell(x)$ is defined using 
Arakelov geometry and 
the  theory of slopes associated to the tangent bundle $\mathcal{T}_V$. 

We can lend support to Peyre's freedom prediction \cite[\S 6]{peyre-freedom} by studying smooth hypersurfaces of low degree in the setting of global fields of positive characteristic. Let 
$X\subset \PP^{n-1}$ be a smooth hypersurface of degree $d$ defined over a finite field $\FF_q$ whose characteristic exceeds $d.$  We put 
\begin{equation}\label{eq:NX}
N_X^{\ve\text{-free}}(B)=\#\left\{x\in X(K): \ell(x)\geq \ve, ~H_{-\omega_X}(x)\leq q^B\right\},
\end{equation}
where $K=\FF_q(T)$ is the rational function field and 
 $\ell(x)$ will be defined in 
\S \ref{s:peyre}. The expectation is that for a suitable range of $\ve$, 
$N_X^{\ve\text{-free}}(B)$ should have the same asymptotic behaviour as the usual counting function 
$N_X(B)$, as $B\to \infty$. 
The following result 
confirms this and will be proved in  \S \ref{s:peyre}.

\begin{theorem}\label{t:P}
Let $d\geq 3$, let $n>3(d-1)2^{d-1}$ and let
$$
0\leq \ve< \frac{n-1}{(n-d)(d-1)^22^{d-1}}.
$$
Then  there exists $\delta>0$ such that 
$$
N_X^{\ve\text{-free}}(B)=c_X
 q^{B} +O\left(q^{(1-\delta)B} \right),
$$
as $B\to \infty$, 
where $c_X$ is the  
function field analogue of the  
constant predicted by Peyre \cite{peyre-duke}.
Furthermore, 
the implied constant only depends on $q$ and $f$.
\end{theorem}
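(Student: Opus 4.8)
The plan is to reduce the counting function $N_X^{\ve\text{-free}}(B)$ to the Diophantine quantity $N_\rho(q,e,f)$ already introduced, and then to insert the asymptotic for $N_\rho(q,e,f)$ that comes out of the circle-method analysis of \S\ref{diophantine-exponential}--\S\ref{s:minor}. First I would set up the dictionary between rational points $x\in X(K)$ of anticanonical height $H_{-\omega_X}(x)\leq q^B$ and vectors $\g\in\FF_q[T]^n$ of degree at most $e$ with no common factor and at least one coordinate of degree exactly $e$: a standard computation with the anticanonical sheaf $\omega_X^{-1}=\mathcal O_X(n-d)$ shows that $H_{-\omega_X}(x)=q^{(n-d)e}$, so the height condition $H_{-\omega_X}(x)\leq q^B$ is essentially $e\leq B/(n-d)$, and $N_X(B)$ is (up to the action of $\FF_q^\times$ and lower-order contributions from smaller $e$) a sum of $N(q,e,f)$ over $e$ in this range. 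In \S\ref{s:peyre} I would then unwind Peyre's definition of $\ell(x)$ in this function-field setting and show, using Lemmas \ref{dimension-free} and \ref{polynomial-expression}, that the condition $\ell(x)\geq\ve$ translates into a lower bound on the ``freeness parameter'' $\rho$ of the associated curve of the shape $\rho\geq \ve\cdot\frac{(n-d)(d-1)}{?}\cdot e$ — more precisely that $\ell(x)$ is controlled by $\rho/e$ up to the normalising factors coming from the slopes of $\mathcal T_X$. This is where the hypothesis $\ve<\frac{n-1}{(n-d)(d-1)^2 2^{d-1}}$ is used: it guarantees that the forced value of $\rho$ is small enough, relative to $e$, that condition \eqref{eq:e-condition} of Theorem \ref{t:1} is satisfied for all relevant $e$, so that the dimension bound \eqref{eq:main-bound} (equivalently, the point count from the circle method) applies on the entire range.

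With the dictionary in place, the next step is to write
\[
N_X^{\ve\text{-free}}(B)=\frac{1}{q-1}\sum_{e}\ \sum_{\rho\geq \rho_0(e,\ve)}\Big(N_\rho(q,e,f)-N_{\rho+1}(q,e,f)\Big)+(\text{boundary terms}),
\]
i.e.\ telescoping the stratification of $\mathcal M_{0,0}(X,e)$ by $U_\rho\setminus U_{\rho+1}$, which isolates the points of exact freeness $\rho$. The main term is then extracted from the major-arc analysis: as stated in \S\ref{diophantine-exponential}, the major-arc contribution to $N_\rho(q,e,f)$ cancels almost exactly with the heuristic approximation $q^{e(n-d)-\rho(n-1)}N(q,e,f)$, and $N(q,e,f)$ itself has an asymptotic of the form $c\, q^{(n-d)e}\cdot q^{e}$-type main term via the circle method applied to the single equation $f(\g)=0$ (this is essentially the function-field Birch/BV result, valid since $n>3(d-1)2^{d-1}$). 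Summing the geometric-type series in $\rho$ and then in $e$ (the dominant $e$ being the largest one allowed by the height bound, $e\approx B/(n-d)$) produces the main term $c_X q^B$, where $c_X$ is assembled from the singular integral and singular series at each place of $\FF_q(T)$; matching this product to Peyre's constant \cite{peyre-duke} is a bookkeeping exercise identifying the local densities with the Tamagawa-type factors.

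The error term $O(q^{(1-\delta)B})$ comes from two sources that must both be shown to be power-saving in $q^B$: the minor-arc bound for $N_\rho(q,e,f)$ from \S\ref{s:minor}, and the tail of the sum over $e$ and over $\rho$. The minor-arc estimate, by the discussion in the introduction, saves a positive power of $q^e$ per equation once $n>3(d-1)2^{d-1}$, and since $q^e\leq q^{B/(n-d)}$ this is a power saving in $q^B$; summing over $O(B)$ values of $e$ and $O(e)$ values of $\rho$ costs only a polynomial factor in $B$, which is absorbed into $\delta$. The hard part will be the second step above: carefully translating Peyre's Arakelov-theoretic definition of $\ell(x)$ over the function field $\FF_q(T)$ into the elementary quantity $\rho/e$ with the correct normalisation, since this requires identifying the relevant slopes of $c^*\mathcal T_X$ with Harder--Narasimhan data of the vector bundle and checking that the ``sufficiently free'' threshold $\ve_B\to 0$ is compatible with the range in which Theorem \ref{t:1} gives a nontrivial codimension; a secondary technical point is ensuring uniformity of all implied constants in $e$ and $\rho$ (not just in $q$), which is what makes the summation over the stratification legitimate.
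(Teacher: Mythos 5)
Your overall frame—translate $\ell(x)<\ve$ into failure of $\rho$-freeness, then control the bad locus with the two-variable circle method—is the right one, but there are two concrete problems. First, the way you propose to extract the main term does not work. The telescoping identity $\sum_{\rho}\bigl(N_\rho(q,e,f)-N_{\rho+1}(q,e,f)\bigr)$ does not isolate the curves of exact freeness $\rho$: by Lemmas \ref{dimension-free} and \ref{polynomial-expression}, $N_\rho(q,e,f)$ is a \emph{weighted} count in which each tuple $\g$ contributes $q^{\dim H^0(\PP^1,\,c^*\hat{\mathcal T}_X\otimes\mathcal O_{\PP^1}(-1-\rho))}$, and this weight exceeds the "expected" value $q^{e(n-d)-\rho(n-1)}$ by an uncontrolled amount on the non-free locus. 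That is precisely why Proposition \ref{unfree-bound}(2) yields only a one-sided inequality for $\#Z_\rho(\FF_q)$, not an exact stratified count. The paper sidesteps this entirely: it writes $N_X^{\ve\text{-free}}(B)=N_X(B)-E_\ve(B)$, obtains the main term $c_Xq^B$ from the \emph{ordinary} counting function $N_X(B)$ alone via the one-variable circle method (Lee's asymptotic \eqref{eq:lee}, together with the M\"obius/zeta factor), and then only needs an \emph{upper bound} for $E_\ve(B)$, obtained from Proposition \ref{unfree-bound}(2), a sharpened Lemma \ref{lem:pebble} (with \eqref{eq:lee} replacing the trivial bound) and Lemma \ref{lem:minor}. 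An asymptotic for each freeness stratum, which your decomposition requires, is not available from the stated results.

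Second, you misplace the role of the hypothesis on $\ve$. Condition \eqref{eq:e-condition} only guarantees that Theorem \ref{t:1} is applicable; it does not guarantee that the resulting saving is a positive power of $q^{B}$, which is what the asymptotic formula needs. One must show $E_\ve(B)=O(q^{(1-\delta)B})$, i.e.\ that the minor-arc exponent $\Gamma_0$ (equivalently $\gamma_0$) is at least $\delta e$; this forces the much stronger inequality $e\geq (d-1)^2 2^{d-1}\rho$ of \eqref{eq:final-hyp}. Since Peyre's slope computation gives $\ell(x)=\frac{(n-1)\rho}{e(n-d)}$ when the curve is $\rho$-free but not $(\rho+1)$-free, the condition $\ell(x)<\ve$ corresponds to failure of $\rho$-freeness with $\rho$ as in \eqref{eq:rho-eps}, roughly $\rho\approx \ve e(n-d)/(n-1)$, and substituting this into \eqref{eq:final-hyp} is exactly what produces the threshold $\ve<\frac{n-1}{(n-d)(d-1)^2 2^{d-1}}$. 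The "$?$" in your normalisation is therefore not a minor bookkeeping point: without pinning down $\ell(x)=\frac{(n-1)\rho}{e(n-d)}$ and routing the hypothesis through \eqref{eq:final-hyp} rather than \eqref{eq:e-condition}, the stated range of $\ve$ is not justified.
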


Note that this result does not require $\ve_B$ to decrease to zero, but only to stay below some fixed constant. This may be because the hypersurface $X$ has Picard rank one, since Peyre 
has shown in  \cite[\S 7.2]{peyre-freedom} that for the product 
 $\mathbb P^1 \times \mathbb P^1$ one requires  $\ve_B\to 0$ for the asymptotic formula to be true.
Finally, one can see from the arguments in Theorem \ref{t:P} that we can take the upper bound for $\ve$ to be significantly greater than $\frac{n-1}{(n-d)(d-1)^22^{d-1}}$ when $n$ is large. 
(In fact, the cutoff is allowed to approach $\frac{1}{d+1}$ as $n\to \infty$.)

With appropriate adjustments to the proof of Theorem \ref{t:P}, it is also possible to handle the
corresponding result for smooth hypersurfaces of low degree defined over $\mathbb{Q}$, with Poisson summation taking the place of the Riemann--Roch arguments that feature in \S \ref{geometry-diophantine}.   This is the object of our concurrent work~\cite{sawin3}.

\begin{ack}
The authors are grateful to 
Paul Nelson, Per Salberger and Jason Starr
for useful  comments. 
While working on this paper the first  author was
supported by EPRSC 
grant \texttt{EP/P026710/1}. The research was partially conducted during the period the second author served as a Clay Research Fellow, and partially conducted during the period he  was supported by Dr. Max R\"{o}ssler, the Walter Haefner Foundation and the ETH Zurich Foundation.
\end{ack}

 \section{Examples}

As usual,  $X\subset \PP^{n-1}$ is assumed to  be a smooth hypersurface of degree $d\geq 3$, over a field $K$ whose characteristic is either $0$ or $>d$.
While the latter condition  arises very naturally in our argument (as explained in Remark \ref{:P}), 
the following result shows that 
the statement of Theorem \ref{t:1} is actually false when it is dropped. 

\begin{lemma}
Let $K=\bar \FF_p$ for a prime $p$  and let $X\subset \PP^{n-1}$ be the 
Fermat hypersurface 
$$
x_1^{d}+\dots+x_n^{d}=0.
$$
Assume that $p\nmid d$ and $d\neq a p^r-1$ 
for any $r\in \NN$ and $a\in\{0,\dots, p-1\}$.
Then $X$ is smooth, none of the curves in $ \mathcal{M}_{0,0}(X,1)$ are $(-1)$-free, and 
$\dim \mathcal{M}_{0,0}(X,1)>2n-d-5$.
\end{lemma}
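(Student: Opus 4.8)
The plan is to describe degree-one maps to $X$ explicitly and translate both assertions into facts about binomial coefficients modulo $p$, using $(as+bt)^m=\sum_k\binom{m}{k}a^{m-k}b^k\,s^{m-k}t^k$ together with Lucas' theorem. A degree-one morphism $c\colon\PP^1\to\PP^{n-1}$ is $c=(\ell_1:\cdots:\ell_n)$ with $\ell_i=a_is+b_it$ linear forms spanning $V:=\langle s,t\rangle$, uniquely up to the $\mathrm{GL}_2$-action on $(s,t)$, and its image lies on $X$ if and only if $\sum_{i=1}^n\ell_i^{\,d}=0$ in $\mathrm{Sym}^dV$. Smoothness of $X$ is immediate and uses only $p\nmid d$: the partials of $x_1^d+\cdots+x_n^d$ are $dx_i^{d-1}$, which have no common zero in $\PP^{n-1}$.

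\textbf{No line is $(-1)$-free.} By Definition~\ref{def-free}, $c$ is $(-1)$-free iff $c^*\mathcal{T}_X\otimes\mathcal{O}_{\PP^1}(1)$ is globally generated, i.e. (writing $c^*\mathcal{T}_X\cong\bigoplus\mathcal{O}(a_i)$) iff every $a_i\ge-1$, i.e. iff $H^1(\PP^1,c^*\mathcal{T}_X)=0$. Combining the pull-back of $0\to\mathcal{T}_X\to\mathcal{T}_{\PP^{n-1}}|_X\to\mathcal{O}_X(d)\to0$ with that of the Euler sequence, and using $c^*\mathcal{T}_{\PP^{n-1}}\cong\mathcal{O}(2)\oplus\mathcal{O}(1)^{n-2}$, this cohomology group is identified with
\[
\operatorname{coker}\left(H^0(\PP^1,\mathcal{O}(1))^{\oplus n}\to H^0(\PP^1,\mathcal{O}(d)),\ (u_i)\mapsto\sum_i u_i\,\tfrac{\partial f}{\partial x_i}(c)\right).
\]
For the Fermat form $\tfrac{\partial f}{\partial x_i}(c)=d\,\ell_i^{d-1}$, so (as $p\nmid d$) the image is $\sum_i V\!\cdot\!\ell_i^{d-1}$. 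By Lucas, $\ell_i^{d-1}=(a_is+b_it)^{d-1}$ is supported on the monomials $s^{d-1-k}t^k$ with $k\in A:=\{k:p\nmid\binom{d-1}{k}\}$, hence $\sum_i V\!\cdot\!\ell_i^{d-1}$ lies in the span of the $s^{d-j}t^j$ with $j\in A\cup(A+1)$. Therefore $c$ is \emph{not} $(-1)$-free as soon as $A\cup(A+1)\neq\{0,\dots,d\}$, i.e. as soon as there is a $j\in\{1,\dots,d\}$ with $p\mid\binom{d-1}{j}$ and $p\mid\binom{d-1}{j-1}$. A base-$p$ digit analysis shows that, under the hypotheses ($p\nmid d$ and $d+1$ not of the form $c\,p^{r}$ with $1\le c\le p$, which is the translation of $d\neq ap^r-1$), such a $j$ always exists; as $c$ was arbitrary, no line on $X$ is $(-1)$-free.

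\textbf{The dimension bound.} Let $Y\subseteq\AA^{2n}$ be the variety $\{(a_i,b_i)_i:\sum_i(a_is+b_it)^d=0\}$ and $Y'\subseteq Y$ the open locus where $(\ell_i)$ spans $V$; then $Y'\to\mathcal{M}_{0,0}(X,1)=F_1(X)$ is a $\mathrm{GL}_2$-torsor, so $\dim\mathcal{M}_{0,0}(X,1)=\dim Y'-4$. Expanding $\sum_i\ell_i^d=\sum_k\binom{d}{k}\bigl(\sum_i a_i^{d-k}b_i^k\bigr)s^{d-k}t^k$, Lucas shows $Y$ is cut out in $\AA^{2n}$ by exactly the $P(d):=\#\{k:p\nmid\binom{d}{k}\}$ equations $\sum_i a_i^{d-k}b_i^k=0$, so every component of $Y$ has dimension $\ge 2n-P(d)$ by Krull's height theorem. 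For $n\ge4$ a "paired" configuration ($\ell_{2j}=\zeta\ell_{2j-1}$ with $\zeta^d=-1$, with $\ell_1,\ell_3$ independent, and one $\ell_i=0$ if $n$ is odd) lies in $Y'$, so $\dim Y'\ge 2n-P(d)$ and $\dim\mathcal{M}_{0,0}(X,1)\ge 2n-P(d)-4$. Finally $P(d)=\prod_j(d_j+1)$ for $d=\sum_j d_jp^j$, and an elementary estimate gives $P(d)\le d$ unless $d+1=c\,p^{r}$ with $1\le c\le p$ — exactly the excluded case — whence $\dim\mathcal{M}_{0,0}(X,1)\ge 2n-d-4>2n-d-5$.

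\textbf{Main obstacle.} The geometry — smoothness, the cohomological rewriting of $(-1)$-freeness, the torsor description of $F_1(X)$, and the Krull bound on $\dim Y$ — is routine. The real content is the combinatorial input: for $d+1$ not a single base-$p$ digit times a power of $p$, one must produce a $j$ with $\binom{d-1}{j}$ and $\binom{d-1}{j-1}$ both divisible by $p$ (for the freeness statement) and establish $\prod_j(d_j+1)\le d$ (for the dimension statement), and verify that the exceptional cases of both assertions match the hypothesis $d\neq ap^r-1$ precisely. Both follow from Lucas' theorem by a short but slightly fiddly digit-by-digit argument, and that is where the care is needed.
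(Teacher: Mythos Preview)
Your proposal is correct, but the paper takes a shorter route to the non-$(-1)$-freeness. Both arguments establish the dimension bound the same way: expand $\sum_i(a_is+b_it)^d$ and observe via Lucas' theorem that only $P(d)=\#\{k:p\nmid\binom{d}{k}\}\le d$ of the $d+1$ coefficient equations survive, so the $\mathrm{GL}_2$-cover of $\mathcal M_{0,0}(X,1)$ has dimension $>2n-d-1$. The paper then invokes Lemma~\ref{free-examples}: a curve is $(-1)$-free if and only if the moduli space is smooth of the expected dimension $2n-d-5$ near it, and since the dimension exceeds this everywhere, no line can be $(-1)$-free. This makes the freeness statement a one-line corollary of the dimension bound, with no further combinatorics.

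Your approach instead computes $H^1(\PP^1,c^*\mathcal T_X)$ directly and reduces to finding $j$ with $p\mid\binom{d-1}{j}$ and $p\mid\binom{d-1}{j-1}$. This is valid, and the extra combinatorial claim is in fact equivalent to the paper's: since $p\nmid d$, the base-$p$ digits of $d-1$ agree with those of $d$ except that the units digit drops by one, and a short check shows that $\{i:p\nmid\binom{d}{i}\}=A\cup(A+1)$ where $A=\{k:p\nmid\binom{d-1}{k}\}$. So the two conditions ``some $\binom{d}{i}$ vanishes mod $p$'' and ``two consecutive $\binom{d-1}{\cdot}$ vanish mod $p$'' coincide. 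The paper's route is more economical since it reuses the dimension computation; yours is more hands-on and avoids appealing to the deformation-theoretic characterisation of $(-1)$-freeness, and is slightly more careful in exhibiting an explicit point of $Y'$ to ensure the non-degenerate locus is nonempty.
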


\begin{proof}
The moduli space of $n$-tuples of polynomials of degree $\leq 1$ satisfying the equation $
x_1^{d}+\dots+x_n^{d}=0
$ is a $\mathrm{GL}_2$-bundle over the moduli stack $\mathcal M_{0,0}(X,1)$ parameterising lines in $X$, because for each line we can choose any basis of the corresponding two-dimensional vector space. Thus  its dimension is equal to $4+ \dim \mathcal M_{0,0}(X,1)$. This space is is cut out by $d+1$ equations in $2n$ variables, where $\binom{d}{i}$ divides all coefficients of the $i$th equation, for $0\leq i\leq d$. 
By Lucas' theorem it follows that $p\mid \binom{d}{i}$ if and only if at least one of the base $p$ digits of $i$ is greater than the corresponding base $p$ digit of $d$. In this way we see that $p\mid \binom{d}{i}$ for some $0\leq i\leq d$ if and only if $d$ does not take the form $ap^r-1$ for some 
$a\in \{0,\dots,p-1\}$.
But then the space is cut out by fewer than $d+1$ equations in $2n$ variables. This implies that it has dimension greater than $2n-d-1$, whence $\dim \mathcal{M}_{0,0}(X,1)>2n-d-5$. 
Furthermore, 
since the dimension near each curve is greater than the expected dimension, it follows from Lemma \ref{free-examples} that they are not $(-1)$-free.   Finally, 
the Fermat hypersurface is smooth over $K$ if and only if  $p\nmid d$.
\end{proof}

This example generalises a discussion of Debarre \cite[\S 2.15]{Debarre}. It shows 
 that for typical $p<d$ the 
 statements of Theorems \ref{t:BV} and \ref{t:1} are false for fields of characteristic $p$.

Returning to the general setting, the following result provides examples  
of curves that are not $\rho$-free.

\begin{lemma}
\label{lem:ex1}	
 Let $d,m,n\in \NN$ with $d \geq 3$ and $ m \leq n/2$. Let $K$ be an infinite field. There exists a non-singular form  $f(x_1,\dots,x_n)$ over $K$ of degree $d$, such that 
$$
f(x_1,\dots,x_m,0,\dots,0)=\frac{\partial f}{\partial x_j} (x_1,\dots,x_m,0\dots,0)=0$$ 
for all $x_1,\dots,x_m$ and all $j \leq n-m$. 
For such a polynomial, every map $c:\mathbb P^1 \to  X$ of degree $e$ that factors through $\mathbb P^{m-1} \subseteq X \subseteq \mathbb P^{n-1}$ fails to be $(\lfloor   \frac{e (m-d)}{m-1} \rfloor +1)$-free. The moduli 
space of such rational curves  has dimension $m(e+1)-4$. 
\end{lemma}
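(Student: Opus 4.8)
The plan is to split the statement into three parts: (i) constructing a non-singular form $f$ with the stated vanishing along the coordinate subspace $\PP^{m-1}$; (ii) showing that any degree $e$ map factoring through $\PP^{m-1}$ fails to be $\rho$-free for $\rho = \lfloor \frac{e(m-d)}{m-1}\rfloor$; and (iii) computing the dimension of the moduli space of such curves. For part (i), I would write $f = x_{n-m+1} q_1 + \dots + x_n q_m + (\text{terms supported away from } \PP^{m-1})$ is too crude; instead, take $f = \sum_{i=1}^{m} x_{n-m+i} \, g_i(x_1,\dots,x_n) + r(x_{n-m+1},\dots,x_n)$ arranged so that every monomial of $f$ either involves at least two of the variables $x_{n-m+1},\dots,x_n$, or involves exactly one of $x_{n-m+1},\dots,x_n$ to the first power. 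Since $m \le n/2$ there are at least $m$ "extra" variables $x_{n-m+1},\dots,x_n$, which gives room to do this: setting $x_{n-m+1}=\dots=x_n=0$ kills $f$ and also kills $\partial f/\partial x_j$ for $j \le n-m$ (each such derivative still has every monomial divisible by one of $x_{n-m+1},\dots,x_n$). The remaining task is to check that a generic such $f$ is non-singular; this is where the hypothesis that $K$ is infinite enters, via a Bertini-type argument — the non-singular forms of this shape form a nonempty Zariski open subset of the (affine) space of all forms of this shape, and one exhibits one explicit smooth member (for instance a suitable Fermat-like combination of the allowed monomials) to see the open set is nonempty, hence $K$-points exist because $K$ is infinite.

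For part (ii), observe that if $c:\PP^1 \to X$ factors as $\PP^1 \xrightarrow{c'} \PP^{m-1} \hookrightarrow \PP^{n-1}$, then the image lies in the linear subspace $\{x_{n-m+1}=\dots=x_n=0\}$, which by part (i) is contained in $X$. Along this subspace, the tangent space to $X$ contains the tangent directions of $\PP^{m-1}$ plus the directions $x_{n-m+1},\dots,x_n$ that are \emph{not} cut down, because the relevant partials $\partial f/\partial x_j$ for $j \le n-m$ vanish identically there. Concretely, $c^*\mathcal{T}_X$ contains $(c')^*\mathcal{T}_{\PP^{m-1}}$ as a subbundle, and the latter has rank $m-1$ and degree $em$ (Euler sequence for $\PP^{m-1}$ with $c'$ of degree $e$). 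If $c$ were $\rho$-free then $c^*\mathcal{T}_X \otimes \mathcal{O}(-\rho)$ is globally generated, hence so is every quotient, but more to the point one uses the subbundle: a globally generated bundle on $\PP^1$ has all its summands of non-negative degree, so twisting down by $\rho$ forces every summand of $c^*\mathcal{T}_X$ to have degree $\ge \rho$; restricting to the rank $m-1$ subbundle $(c')^*\mathcal{T}_{\PP^{m-1}}$ forces $em \ge (m-1)\rho$, i.e. $\rho \le \frac{em}{m-1}$. Being more careful with how the $\mathcal{O}(-d)$ twist from the defining equation interacts — the image curve has degree $e$ in $\PP^{m-1}$ and the normal direction of $X$ inside $\PP^{m-1}$-worth of ambient is trivial here — one sharpens this to $\rho \le \lfloor \frac{e(m-d)}{m-1}\rfloor$; I expect this sharpening, getting the "$-d$" correctly, to be the fiddly point and it should follow by writing the restricted Euler/normal sequence for $X \cap \PP^{m-1}$... wait, $\PP^{m-1}\subseteq X$ so there is no intersection; rather the point is that $c^*\mathcal{T}_X|$ sits in an exact sequence relating it to $(c')^*\mathcal{T}_{\PP^{m-1}}$, the trivial bundle $\mathcal{O}^{\oplus m}$ (normal directions $x_{n-m+1},\dots,x_n$), and one copy of $\mathcal{O}(de)$ coming from the equation $f$, so a Chern-class / degree count on the rank $(m-1)$-relevant part yields the $e(m-d)$ numerator. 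So the bottleneck of the whole proof is tracking this twist precisely, and I would do it by the same local computation used in Lemma \ref{free-examples} (which relates excess dimension of the moduli space to failure of $\rho$-freeness), applied to the sub-moduli-space of curves in $\PP^{m-1}$.

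For part (iii), the moduli space of degree $e$ rational curves through $\PP^{m-1} \subseteq X$ is identified with $\mathcal{M}_{0,0}(\PP^{m-1}, e)$, which is well known to be irreducible of dimension $em + m - 4$: a degree $e$ map $\PP^1 \to \PP^{m-1}$ is given by $m$ binary forms of degree $e$ up to scalar and up to $\mathrm{PGL}_2$, giving $m(e+1) - 1 - 3 = em + m - 4 = m(e+1)-4$. That matches the claimed $m(e+1)-4$. The only subtlety is that I want this to be the dimension of the locus \emph{inside} $\mathcal{M}_{0,0}(X,e)$, which is immediate since the inclusion $\PP^{m-1}\hookrightarrow X$ induces a closed immersion of the corresponding moduli spaces (a map to $X$ landing in $\PP^{m-1}$ is the same as a map to $\PP^{m-1}$). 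This gives the dimension $m(e+1)-4$ with no further work, completing the proof.
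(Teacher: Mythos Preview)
Your outline for parts (i) and (iii) is broadly reasonable, though for (i) the paper does not exhibit an explicit smooth member; it instead bounds the codimension of the singular locus inside the linear space of admissible forms by an incidence-variety count, using the test family $x_j\ell^{d-1}$ to show the $n$ vanishing conditions at a point off $\PP^{m-1}$ are independent. Your ``Fermat-like'' member is not specified, and producing one explicitly is not entirely trivial under the monomial constraints, so the paper's codimension argument is the cleaner route.

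Part (ii) contains a genuine gap. You argue via the \emph{subbundle} $(c')^*\mathcal{T}_{\PP^{m-1}}\subset c^*\mathcal{T}_X$, claiming that if all summands of $c^*\mathcal{T}_X$ have degree $\ge\rho$ then the same holds for the subbundle, hence $em\ge(m-1)\rho$. This is false: subbundles of globally generated bundles on $\PP^1$ need not be globally generated (e.g.\ $\mathcal{O}(-1)\hookrightarrow\mathcal{O}^2$), so no lower bound on the summands of the subbundle follows. Moreover, even if the implication held, the bound $\rho\le em/(m-1)$ is far from the claimed $\rho\le\lfloor e(m-d)/(m-1)\rfloor$; your ``sharpening'' paragraph does not close this, and invoking Lemma~\ref{free-examples} only handles $(-1)$-freeness, not arbitrary $\rho$.

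The correct mechanism, and what the paper does, is to produce a low-degree \emph{quotient} of $\mathcal{T}_X|_{\PP^{m-1}}$. Using the Euler sequence and the vanishing of $\partial f/\partial x_j$ for $j\le n-m$ along $\PP^{m-1}$, one sees that the map $df:\mathcal{T}_{\PP^{n-1}}\to\mathcal{O}(d)$, restricted to $\PP^{m-1}$, factors through the projection onto the last $m$ coordinates $\mathcal{O}_{\PP^{m-1}}(1)^m$. This yields a surjection $\mathcal{T}_X|_{\PP^{m-1}}\twoheadrightarrow\mathcal{V}$, where $\mathcal{V}$ is the kernel of $\mathcal{O}_{\PP^{m-1}}(1)^m\to\mathcal{O}_{\PP^{m-1}}(d)$, a rank $m-1$ bundle of degree $m-d$ (note: $\mathcal{O}(1)^m$, not $\mathcal{O}^m$ as you wrote). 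Pulling back along $c$, the quotient $c^*\mathcal{V}$ has degree $e(m-d)$ and rank $m-1$, so some summand has degree at most $\lfloor e(m-d)/(m-1)\rfloor$. Since quotients of globally generated bundles are globally generated, this summand obstructs $(\lfloor e(m-d)/(m-1)\rfloor+1)$-freeness. The ``$-d$'' you were hunting for comes precisely from the $\mathcal{O}(d)$ in this short exact sequence for $\mathcal{V}$.
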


Let $X\subset \PP^{n-1}$ be a smooth hypersurface with underlying polynomial $f$, as in the lemma. 
Taking $m=d$ and $\rho=0$, we see that when $n>2d$ 
the space $Z_1$ of non-very-free rational curves $\PP^1\to X$ of degree $e$ has dimension 
at least $
d(e+1)-4.
$

\begin{proof}[Proof of Lemma \ref{lem:ex1}] 
Without the non-singularity condition, 
the space of such polynomials is linear. The singular polynomials form a closed subset. To prove the existence, it is sufficient to show that this subset has codimension $1$. The set of singular polynomials is the projection from the product of this linear space with $\mathbb P^{n-1}$ of the set of pairs of a point and a polynomial singular at that point.  For elements in $\mathbb P^{m-1} \subseteq \mathbb P^{n-1}$, the space of polynomials singular at that point has codimension $m$, as it is defined by the
$m$ 
independent conditions  $\frac{\partial f}{\partial x_j} (x_1,\dots,x_m,0\dots,0)=0$ for 
$n-m+1 \leq j\leq n$.  
For all other elements, we claim that the $n$ conditions $\frac{\partial f}{\partial x_j}(x_1,\dots,x_n)=0 $ for $1\leq j\leq n$  define a codimension $n$ subspace. 
To see this we may take a linear form 
$l$ 
in the last $n-m$ coordinates that is nonzero at that point.
Then  the $n$-dimensional space of polynomials generated by 
$x_j l^{d-1} $  
for $1\leq j\leq n$ lie in the linear subspace, since  $d-1 \geq 2$. But only the zero element in that subspace satisfies all $n$  conditions.
It follows that the singular locus is the union of the projection of a codimension 
$m$ 
bundle on  $\mathbb P^{m-1}$ and a codimension $n$ bundle on its complement in $\mathbb P^{n-1}$. Thus the singular locus has  codimension at least one, as desired.

For the freeness, we use the Euler exact sequence 
\begin{equation}\label{eq:euler}
0 \to \mathcal O_{\mathbb P^{n-1}} \to \mathcal O_{\mathbb P^{n-1}}(1)^n \to \mathcal T_{\mathbb P^{n-1}}\to 0.
\end{equation}
 Consider the map $\mathcal O_{\mathbb P^{n-1}}(1)^n  \to \mathcal O_{\mathbb P^{n-1}}(1)^m$ given by projection onto the last $m$ factors. Because $m \leq n/2$ the composition of this projection with the map $\mathcal O_{\mathbb P^{n-1}} \to \mathcal O_{\mathbb P^{n-1}}(1)^n $ vanishes on $\mathbb P^{m-1}$. So over $\mathbb P^{m-1}$, we obtain a map $\mathcal T_{\mathbb P^{n-1}} \to 
 \mathcal O_{\mathbb P^{n-1}}(1)^m$.

Next consider the exact sequence $0 \to \mathcal T_{\mathbb P^{n-1}} \to \mathcal T_X \to \mathcal O_{X} (d) \to 0$ on $X$. The second map of this sequence is the dot product with the derivative of $f$. By assumption on $f$, restricted to $\mathbb P^{m-1}$, this map factors through the projection onto the last $m$ vectors. Hence we obtain an exact sequence 
$$
0 \to 
\mathcal{V} 
 \to \mathcal O_{\mathbb P^{m-1} }(1)^m \to \mathcal O_{\mathbb P^{m-1}} (d) \to 0
$$ 
whose kernel $\mathcal{V}$ is a vector bundle on $\mathbb P^{m-1}$ of degree $m-d$, which arises as a quotient of $\mathcal T_{X}$.

For $c : \mathbb P^1 \to X$ a map of degree $e$ whose image lies in $\mathbb P^{m-1}$,  $c^* \mathcal{V}$ is a vector bundle of degree $e(m-d)$ on $\mathbb P^1$ which arises as a quotient of $c^* \mathcal T_X$. Because $c^* \mathcal{V}$ splits as a direct sum of $m-1$ line bundles, it must contain some line bundle summand of degree at most $\frac{ e(m-d)}{m-1}$, and we can round down to the nearest integer. Hence $c^* \mathcal T_X$ has some line bundle summand of degree at most $\lfloor \frac{ e(m-d)}{m-1} \rfloor$ and hence $c$ is not  $(\lfloor \frac{ e(m-d)}{m-1} \rfloor+1)$-free.

The dimension estimate is the standard calculation for the moduli space of rational curves in projective space.
\end{proof}

Even for a general hypersurface there are some non-very-free curves. Indeed, for such a variety, the moduli space of lines has dimension $2n-d-5$, and each line admits a $(2e+1)$-dimensional moduli space of degree $e$ maps from $\mathbb P^1$ to that line. Because the pull-back of the tangent bundle to a line has rank $n-2$ and degree $n-d$, it contains some summand of degree at most $0$ as soon as $d \geq 2$, and so every pull-back of it has a summand of the same degree, and so these degree $e$ coverings of lines fail to be $1$-free. Hence, for a general hypersurface
$X\subset \PP^{n-1}$ of degree $d,$ we have 
$\dim Z_1\geq  2(n+e)-d-7$.

These examples show that the dimension of the moduli space of non-very-free curves can grow linearly in $n$ and it can grow linearly in $e$. We do not know if it can grow linearly in $ne$, as the dimension of 
$\mathcal{M}_{0,0}(X,e)$ does.

\section{Vector bundles on $\mathbb P^1$}\label{geometry-diophantine}

Let $f$ be a homogeneous polynomial of degree $d$ in $n$ variables over a field $K$ and let $X\subset\PP^{n-1}$ be its projective zero locus. Assume that $X$ is smooth and let $\mathcal T_X$ be its tangent bundle. In this section we investigate the geometry of $\rho$-free rational curves  $c: \mathbb P^1 \to X$, in the sense of Definition \ref{def-free}.
It turns out that there is  a natural characterization of the $(-1)$-free curves, which we recall here.

\begin{lemma}\label{free-examples} 
A rational curve
$c: \mathbb P^1 \to X$ of degree $e$  is $(-1)$-free if and only if, in a neighborhood of $c$, the moduli space of rational curves on $X$ is smooth of dimension $(n-d)e+n-5 $.\end{lemma}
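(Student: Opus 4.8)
The plan is to reduce the statement to a cohomological computation via the deformation theory of the space of maps, together with Grothendieck's splitting theorem on $\mathbb{P}^1$.

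First I would pass from $\mathcal{M}_{0,0}(X,e)$ to the scheme $\mathrm{Mor}_e(\mathbb{P}^1,X)$ of degree $e$ morphisms $\mathbb{P}^1\to X$: \'etale-locally near $[c]$ the former is the quotient of the latter by $\mathrm{PGL}_2$, which acts with finite stabiliser at $[c]$ because $c$ is non-constant ($e\geq 1$), so $\dim_{[c]}\mathcal{M}_{0,0}(X,e)=\dim_{[c]}\mathrm{Mor}_e(\mathbb{P}^1,X)-3$ and the two are smooth at $[c]$ simultaneously. Standard deformation theory identifies the Zariski tangent space of $\mathrm{Mor}_e(\mathbb{P}^1,X)$ at $[c]$ with $H^0(\mathbb{P}^1,c^*\mathcal{T}_X)$ and places the obstructions in $H^1(\mathbb{P}^1,c^*\mathcal{T}_X)$, so the local ring is a quotient of a power series ring in $h^0(c^*\mathcal{T}_X)$ variables by at most $h^1(c^*\mathcal{T}_X)$ relations, giving
\[
\dim_{[c]}\mathrm{Mor}_e(\mathbb{P}^1,X)\geq h^0(c^*\mathcal{T}_X)-h^1(c^*\mathcal{T}_X)=\chi(c^*\mathcal{T}_X)=(n-d)e+n-2 ,
\]
where the last equality is Riemann--Roch on $\mathbb{P}^1$ applied to $c^*\mathcal{T}_X$, a bundle of rank $n-2$ and degree $(n-d)e$ (the degree being computed from the Euler sequence of $\mathbb{P}^{n-1}$ and the normal bundle sequence of $X\subset\mathbb{P}^{n-1}$). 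Hence $\dim_{[c]}\mathcal{M}_{0,0}(X,e)\geq(n-d)e+n-5$ unconditionally, and the assertion ``$\mathcal{M}_{0,0}(X,e)$ is smooth of dimension $(n-d)e+n-5$ near $c$'' is equivalent to ``$\mathrm{Mor}_e(\mathbb{P}^1,X)$ is smooth of dimension $(n-d)e+n-2$ at $[c]$'' (smoothness is an open condition and on a smooth scheme the local dimension is locally constant, so the statement at the single point $[c]$ propagates to a neighbourhood).

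Next I would show that $\mathrm{Mor}_e(\mathbb{P}^1,X)$ is smooth of that dimension at $[c]$ if and only if $H^1(\mathbb{P}^1,c^*\mathcal{T}_X)=0$. If $H^1$ vanishes the deformation problem is unobstructed, so the local ring is regular of dimension $h^0(c^*\mathcal{T}_X)=\chi(c^*\mathcal{T}_X)=(n-d)e+n-2$. Conversely, if $H^1\neq 0$ then the tangent space has dimension $h^0(c^*\mathcal{T}_X)=\chi(c^*\mathcal{T}_X)+h^1(c^*\mathcal{T}_X)>(n-d)e+n-2$; since the local dimension is already at least $(n-d)e+n-2$, smoothness at $[c]$ would force the local dimension to equal this too-large tangent space dimension, so $\mathrm{Mor}_e(\mathbb{P}^1,X)$ is not smooth of dimension $(n-d)e+n-2$ at $[c]$ (in particular if it is smooth there at all, it is of strictly larger dimension). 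Finally I would translate the cohomological condition: writing $c^*\mathcal{T}_X\cong\bigoplus_{i=1}^{n-2}\mathcal{O}_{\mathbb{P}^1}(a_i)$ by Grothendieck's theorem, $H^1(\mathbb{P}^1,c^*\mathcal{T}_X)=\bigoplus_i H^1(\mathbb{P}^1,\mathcal{O}_{\mathbb{P}^1}(a_i))$ vanishes iff every $a_i\geq -1$, iff $\bigoplus_i\mathcal{O}_{\mathbb{P}^1}(a_i+1)=c^*\mathcal{T}_X\otimes\mathcal{O}_{\mathbb{P}^1}(1)$ is globally generated, which by Definition~\ref{def-free} with $\rho=-1$ is exactly the condition that $c$ is $(-1)$-free. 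Chaining the equivalences proves the lemma.

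The only delicate point — more bookkeeping than genuine obstacle — is the passage between $\mathrm{Mor}_e(\mathbb{P}^1,X)$ and $\mathcal{M}_{0,0}(X,e)$ (the $\mathrm{PGL}_2$-action, finiteness of $\mathrm{Aut}(c)$ for $e\geq 1$, the shift of $3$ in dimension) and invoking correctly that, because $X$ is smooth, obstructions really do lie in $H^1(\mathbb{P}^1,c^*\mathcal{T}_X)$ and hence $\dim_{[c]}$ is always at least the expected value; it is precisely this inequality that powers the contrapositive in the second step, so that an oversized tangent space is enough to rule out smoothness of the expected dimension.
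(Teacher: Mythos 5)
Your proposal is correct and follows essentially the same route as the paper: identify the tangent space with $H^0(\mathbb P^1, c^*\mathcal T_X)$ modulo reparametrisations, use $H^1$ as the obstruction space together with Riemann--Roch, and translate $H^1(\mathbb P^1, c^*\mathcal T_X)=0$ into $(-1)$-freeness (the paper cites Debarre's Remark 4.6 for this last equivalence where you rederive it from Grothendieck's splitting theorem). The only cosmetic difference is that you work on $\mathrm{Mor}_e(\mathbb P^1,X)$ and divide by $\mathrm{PGL}_2$, while the paper works directly with the quotient tangent space $H^0(\mathbb P^1, c^*\mathcal T_X)/H^0(\mathbb P^1,\mathcal T_{\mathbb P^1})$.
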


Under the assumptions of Theorem \ref{t:BV} or \cite{RY}, $\mathcal M_{0,0}(X,e)$ has dimension $(n-d)e+n-5$, so this is simply equivalent to $\mathcal M_{0,0}(X,e)$ being smooth at $c$. 

\begin{proof}[Proof of Lemma \ref{free-examples}] The tangent space of the moduli space of rational curves at $c$ is 
$$H^0(\mathbb P^1, c^* \mathcal T_X)/H^0(\mathbb P^1, \mathcal T_{\mathbb P^1}) .$$
Note that 
$H^0(\mathbb P^1, \mathcal T_{\mathbb P^1})$ has dimension $3$.   By Riemann--Roch,
\begin{align*}
  \dim H^0(\mathbb P^1, c^* \mathcal  T_X)  - \dim H^1( \mathbb P^1, c^* \mathcal T_X) &=
  \dim (c^* \mathcal T_X) + \deg (c^* \mathcal T_X) \\ &= n-2  + e (n-d) .
\end{align*}
 Hence if $c$ is a smooth point on a component of dimension $n-5 + e(n-d)$ then $H^0(\mathbb P^1, c^* \mathcal T_X)$ has dimension  $n-2 + e(n-d) $ and so $H^1( \mathbb P^1, c^* \mathcal T_X)$ vanishes. Thus  \cite[Remark 4.6]{Debarre} implies that  $c$ is ($-1$)-free. 

Conversely if $c$ is $(-1)$-free then $H^1(\mathbb P^1, c^* \mathcal T_X)$ vanishes by \cite[Remark 4.6]{Debarre}, so deformations are unobstructed. Thus  the moduli space is smooth at $c$, and the dimension of the tangent space to the moduli space is $n-5 + e(n-d) $.
\end{proof}

Let $\hat{\mathcal T}_X$  be the inverse image of $\mathcal T_X \subseteq \mathcal T_{\mathbb P^{n-1}}$ under the map $\mathcal O_{\mathbb P^{n-1}}(1)^n \to \mathcal T_{\mathbb P^{n-1}}$ in the Euler sequence \eqref{eq:euler}.  
This yields  
$$
0\to \mathcal{O}_X \to \hat{\mathcal{T}}_X \to \mathcal{T}_X\to 0,
$$
so that in particular $\hat{\mathcal{T}}_X $ is a vector bundle of rank $n-1$ on $X$. 
With this in mind, we refine Definition \ref{def-free} as follows.

\begin{defi} We say that $c: \mathbb P^1 \to X$ is strongly $\rho$-free if $c^* \hat{\mathcal T}_X \otimes \mathcal O_{\mathbb P^1}(-\rho)$ is globally generated.
\end{defi} 

We thank Paul Nelson for asking a question that suggested the above definition, and which turns out to simplify our argument compared to studying the tangent bundle directly.

\begin{lemma}\label{stronger-than} If $c$ is strongly $\rho$-free, then it is $\rho$-free. \end{lemma}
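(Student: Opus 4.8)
The plan is to deduce $\rho$-freeness of $c$ from strong $\rho$-freeness by running the defining short exact sequence
\[
0\to \mathcal{O}_X \to \hat{\mathcal{T}}_X \to \mathcal{T}_X\to 0
\]
through the operations of pullback along $c$ and twisting by $\mathcal{O}_{\mathbb{P}^1}(-\rho)$, and then invoking the elementary fact that a quotient of a globally generated sheaf is globally generated.

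First I would pull the displayed sequence back along $c\colon \mathbb{P}^1\to X$. Since all three terms are vector bundles on $X$, the sequence is locally split, so its pullback remains exact; using $c^*\mathcal{O}_X=\mathcal{O}_{\mathbb{P}^1}$ this reads
\[
0\to \mathcal{O}_{\mathbb{P}^1} \to c^*\hat{\mathcal{T}}_X \to c^*\mathcal{T}_X\to 0 .
\]
Tensoring with the line bundle $\mathcal{O}_{\mathbb{P}^1}(-\rho)$, which is an exact operation, I would obtain the exact sequence
\[
0\to \mathcal{O}_{\mathbb{P}^1}(-\rho) \to c^*\hat{\mathcal{T}}_X\otimes\mathcal{O}_{\mathbb{P}^1}(-\rho) \to c^*\mathcal{T}_X\otimes\mathcal{O}_{\mathbb{P}^1}(-\rho)\to 0 .
\]

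To finish, I would note that by hypothesis the middle term $c^*\hat{\mathcal{T}}_X\otimes\mathcal{O}_{\mathbb{P}^1}(-\rho)$ is globally generated, while the right-hand term $c^*\mathcal{T}_X\otimes\mathcal{O}_{\mathbb{P}^1}(-\rho)$ is a quotient of it and is therefore globally generated as well. By Definition~\ref{def-free} this is precisely the assertion that $c$ is $\rho$-free. I do not expect any genuine obstacle here: the argument is formal, and the only point meriting a word of care is the exactness of the pullback of the short exact sequence, which holds because it is a sequence of vector bundles and hence locally split.
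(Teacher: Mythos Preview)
Your argument is correct and is essentially the same as the paper's: the paper simply observes that $\mathcal{T}_X$ is a quotient of $\hat{\mathcal{T}}_X$ and that quotients of globally generated bundles are globally generated, which is exactly the core of what you wrote, only with the pullback and twist made explicit.
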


\begin{proof} This follows from the fact that $\mathcal T_X$ is a quotient of $\hat{\mathcal T}_X$ and if a vector bundle is globally generated then every quotient is globally generated. \end{proof}

\begin{lemma}\label{dimension-free} We have \[ \dim H^0 ( \mathbb P^1, c^* \hat{\mathcal T}_X \otimes \mathcal O_{\mathbb P^1}(-1-\rho) ) \geq e (n-d) - \rho (n-1) \] with equality if and only if $c$ is strongly $\rho$-free.   \end{lemma}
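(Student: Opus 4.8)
The plan is to reduce the statement to Riemann--Roch on $\mathbb P^1$ together with Grothendieck's splitting theorem. First I would record the numerical invariants of $c^* \hat{\mathcal T}_X$. Pulling back the locally split exact sequence $0\to \mathcal O_X \to \hat{\mathcal T}_X \to \mathcal T_X \to 0$ along $c$ gives an exact sequence
$$
0 \to \mathcal O_{\mathbb P^1} \to c^* \hat{\mathcal T}_X \to c^* \mathcal T_X \to 0
$$
of vector bundles on $\mathbb P^1$. Since $c^*\mathcal T_X$ has rank $n-2$ and degree $e(n-d)$ (as recorded in Remark \ref{rem:1.4}), it follows that $c^*\hat{\mathcal T}_X$ has rank $n-1$ and degree $e(n-d)$. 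Hence the twist $E:=c^* \hat{\mathcal T}_X \otimes \mathcal O_{\mathbb P^1}(-1-\rho)$ has rank $n-1$ and degree $e(n-d)-(1+\rho)(n-1)$, so by Riemann--Roch on $\mathbb P^1$,
$$
\dim H^0(\mathbb P^1, E) - \dim H^1(\mathbb P^1, E) = \deg E + \rank E = e(n-d)-(1+\rho)(n-1)+(n-1) = e(n-d)-\rho(n-1).
$$
This immediately yields the claimed inequality, with equality if and only if $H^1(\mathbb P^1, E)=0$.

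It remains to identify the vanishing $H^1(\mathbb P^1, E)=0$ with strong $\rho$-freeness. By Grothendieck's theorem I would write $c^*\hat{\mathcal T}_X \cong \bigoplus_{i=1}^{n-1}\mathcal O_{\mathbb P^1}(b_i)$ for suitable integers $b_i$. Then $c^*\hat{\mathcal T}_X \otimes \mathcal O_{\mathbb P^1}(-\rho) \cong \bigoplus_i \mathcal O_{\mathbb P^1}(b_i-\rho)$ is globally generated precisely when $b_i-\rho \geq 0$ for all $i$, i.e.\ when $c$ is strongly $\rho$-free; while $E \cong \bigoplus_i \mathcal O_{\mathbb P^1}(b_i-1-\rho)$ has $H^1(\mathbb P^1,E)=\bigoplus_i H^1(\mathbb P^1,\mathcal O_{\mathbb P^1}(b_i-1-\rho))=0$ precisely when $b_i-1-\rho \geq -1$ for all $i$, i.e.\ again when $b_i-\rho\geq 0$ for all $i$. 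The two conditions coincide, which completes the argument.

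There is no real obstacle here: the proof is a bookkeeping exercise. The only point requiring a moment's care is the degree computation for $c^*\hat{\mathcal T}_X$ (keeping track of the twist by $\mathcal O_{\mathbb P^1}(-1-\rho)$ in rank $n-1$ versus the rank $n-2$ of $\mathcal T_X$), and the elementary but crucial observation on $\mathbb P^1$ that a line bundle $\mathcal O(a)$ is globally generated exactly when $H^1(\mathbb P^1,\mathcal O(a-1))=0$ — it is precisely this shift by one that matches the $\mathcal O_{\mathbb P^1}(-\rho)$ appearing in the definition of strong $\rho$-freeness with the $\mathcal O_{\mathbb P^1}(-1-\rho)$ appearing in the cohomology group of interest.
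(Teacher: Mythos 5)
Your proposal is correct and follows essentially the same route as the paper: Riemann--Roch on $\mathbb P^1$ to get $\chi(E)=e(n-d)-\rho(n-1)$, then Grothendieck's splitting theorem to identify the vanishing of $H^1$ with global generation of $c^*\hat{\mathcal T}_X\otimes\mathcal O_{\mathbb P^1}(-\rho)$. The only (immaterial) difference is that you compute $\deg c^*\hat{\mathcal T}_X$ from the extension $0\to\mathcal O_X\to\hat{\mathcal T}_X\to\mathcal T_X\to 0$, whereas the paper reads it off from the presentation of $\hat{\mathcal T}_X$ as the kernel of $\mathcal O_{\mathbb P^{n-1}}(1)^n\to\mathcal O_{\mathbb P^{n-1}}(d)$.
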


\begin{proof} Because $\mathcal T_X$ is the kernel of the map $df: \mathcal T_{\mathbb P^{n-1}} \to \mathcal O_{\mathbb P^{n-1}}(d)$, $\hat{\mathcal T}_X$ is the kernel of a map $\mathcal O_{\mathbb P^{n-1}}(1)^n \to \mathcal O_{\mathbb P^{n-1}}(d)$ and hence has degree $n-d$. Thus $c^ * \hat{\mathcal T}_X$ has degree $e(n-d)$. Because it has rank $n-1$, its tensor product with  with $\mathcal O_{\mathbb P^1}(-1-\rho)$ has degree 
$e(n-d) - \rho (n-1) - (n-1)$.
Hence by Riemann--Roch, the dimension of its space of global sections is 
\begin{align*}
\dim H^0 ( \mathbb P^1, c^* \hat{\mathcal T}_X &\otimes \mathcal O_{\mathbb P^1}(-1-\rho) )
\\
&=
e (n-d) - \rho (n-1)+ \dim H^1( \mathbb P^1, c^* \hat{\mathcal T}_X \otimes \mathcal O_{\mathbb P^1}(-1-\rho) ).\end{align*}

It now suffices to show that $  H^1( \mathbb P^1, c^* \hat{\mathcal T}_X \otimes \mathcal O_{\mathbb P^1}(-1-\rho) $ vanishes if and only if $c^* \hat{\mathcal T}_X \otimes \mathcal O_{\mathbb P^1}(-\rho)$ is globally generated.
We can assume that  \[ c^*\hat{ \mathcal{T}}_X= \bigoplus_{i=1}^{n-1} \mathcal O_{\mathbb P^1}(k_i).\] Then $H^1( \mathbb P^1, c^* \hat{\mathcal T}_X \otimes \mathcal O_{\mathbb P^1}(-1-\rho) )=0$ if and only if $k_i-1-\rho \geq -1$ for all $i$, which happens if and only if $k_i -\rho \geq 0$ for all $i$, which occurs if and only if $c^*\hat{\mathcal{T}}_X \otimes \mathcal O_{\mathbb P^{1}}(-\rho)$ is globally generated. \end{proof}

Vector notation such as $\bfg$ or $\bfh$ will denote $n$-tuples of polynomials in $T$.
 Let $\bfg$ be an $n$-tuple of polynomials in $T$ of degree at most $e$, at least one of degree $e$, with no common zero, and such that $f(\bfg)=0$. These conditions ensure that $(g_1: \dots : g_n)$ defines a degree $e$ map $c: \mathbb P^1 \to X$. 
 
 \begin{lemma}\label{polynomial-expression} 
 $ H^0(\mathbb P^1, c^*\hat{ \mathcal T}_X \otimes \mathcal O_{\mathbb P^1}(-1-\rho)) $ is isomorphic to the space of $n$- tuples $\bfh$ of polynomials in $T$ of degree $\leq e-1-\rho$, such that $\nabla f(\bfg) \cdot \bfh = 0$. \end{lemma}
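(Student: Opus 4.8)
The plan is to make the identification of $H^0(\mathbb{P}^1, c^*\hat{\mathcal{T}}_X \otimes \mathcal{O}_{\mathbb{P}^1}(-1-\rho))$ with the space of $n$-tuples $\bfh$ completely explicit, by pulling back the defining exact sequence of $\hat{\mathcal{T}}_X$ and reading off global sections.

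First I would recall that $\hat{\mathcal{T}}_X$ sits inside the Euler sequence as the kernel of the composite map $\mathcal{O}_{\mathbb{P}^{n-1}}(1)^n \to \mathcal{T}_{\mathbb{P}^{n-1}} \xrightarrow{df} \mathcal{O}_{\mathbb{P}^{n-1}}(d)$ restricted to $X$, where the map $df$ is given by dotting with $\nabla f = (\partial f/\partial x_1, \dots, \partial f/\partial x_n)$. Thus on $X$ we have the short exact sequence
\begin{equation*}
0 \to \hat{\mathcal{T}}_X \to \mathcal{O}_X(1)^n \xrightarrow{\nabla f} \mathcal{O}_X(d) \to 0,
\end{equation*}
where surjectivity on the right uses that $X$ is smooth (the partials have no common zero on $X$). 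Pulling back along $c$, which is flat onto its image and in any case a map from a curve, and twisting by $\mathcal{O}_{\mathbb{P}^1}(-1-\rho)$, gives
\begin{equation*}
0 \to c^*\hat{\mathcal{T}}_X \otimes \mathcal{O}_{\mathbb{P}^1}(-1-\rho) \to \mathcal{O}_{\mathbb{P}^1}(e-1-\rho)^n \to \mathcal{O}_{\mathbb{P}^1}(de-1-\rho) \to 0,
\end{equation*}
using that $c^*\mathcal{O}_X(1) = \mathcal{O}_{\mathbb{P}^1}(e)$ since $c$ has degree $e$, and where the middle-to-right map is now multiplication by the $n$-tuple $\nabla f(\bfg)$ of polynomials of degree $de$ (each $\partial f/\partial x_i$ has degree $d-1$, evaluated at the degree-$e$ tuple $\bfg$).

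Next I would take the long exact sequence in cohomology. Since $H^0(\mathbb{P}^1, \mathcal{O}_{\mathbb{P}^1}(m))$ is the space of polynomials in $T$ of degree $\le m$ (interpreting this as the zero space if $m < 0$), the sequence identifies $H^0(\mathbb{P}^1, c^*\hat{\mathcal{T}}_X \otimes \mathcal{O}_{\mathbb{P}^1}(-1-\rho))$ with the kernel of the map sending an $n$-tuple $\bfh$ of polynomials of degree $\le e-1-\rho$ to the single polynomial $\nabla f(\bfg) \cdot \bfh$ of degree $\le de-1-\rho$. This kernel is exactly the space described in the statement, so the identification follows once the exact sequence is in place. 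One small point to check is the description of global sections on $\mathbb{P}^1$ in terms of polynomials in $T$ rather than homogeneous forms: with the standard chart $T = x/y$ on $\mathbb{P}^1$, a section of $\mathcal{O}(m)$ corresponds to a polynomial of degree $\le m$ in $T$, and the multiplication map is compatible with this dehomogenization because $\nabla f(\bfg) \cdot \bfh$ is genuinely the dehomogenization of the corresponding map of homogeneous forms (the degrees match: $de$ from the coefficient tuple plus $e-\rho-1$ from $\bfh$ minus... one should simply track that the twist by $\mathcal{O}(-1-\rho)$ is what produces the shift, and that $c^*\hat{\mathcal{T}}_X$ genuinely pulls back the kernel sheaf rather than just the kernel of pulled-back maps).

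The main obstacle, and the only genuinely non-formal point, is justifying that pulling back preserves the short exact sequence — i.e. that $c^*\hat{\mathcal{T}}_X$ is the kernel of $c^*(\nabla f)$, equivalently that $\mathrm{Tor}_1$ of the cokernel $\mathcal{O}_X(d)$ vanishes under pullback. This is immediate here because the cokernel $\mathcal{O}_X(d)$ is a line bundle, hence flat, so pullback is exact on this particular sequence; alternatively one notes $c^*\hat{\mathcal{T}}_X$ is already a sub-bundle of a vector bundle on a smooth curve, hence locally free, and the pulled-back sequence of locally free sheaves remains exact because it is exact on the level of fibres (the rank of $\nabla f(\bfg)$ as a map of fibres is $1$ at every point, since $\bfg$ has no common zero and $X$ is smooth so $\nabla f$ never vanishes on the image of $c$). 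With this in hand the rest is the routine cohomology bookkeeping sketched above.
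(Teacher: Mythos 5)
Your proof is correct and follows essentially the same route as the paper: pull back the defining sequence $0 \to \hat{\mathcal T}_X \to \mathcal O_{\mathbb P^{n-1}}(1)^n \to \mathcal O_{\mathbb P^{n-1}}(d) \to 0$ along $c$, twist by $\mathcal O_{\mathbb P^1}(-1-\rho)$, and read off $H^0$ as the kernel of multiplication by $\nabla f(\g)$ via the long exact sequence; your added care about exactness of the pullback is a fine (if unneeded in the paper's write-up) elaboration. The only slip is the parenthetical claim that the entries of $\nabla f(\g)$ have degree $de$ in $T$ — they have degree $(d-1)e$, which is what makes the target $\mathcal O_{\mathbb P^1}(de-1-\rho)$ consistent — but this does not affect the argument.
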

 
 \begin{proof} 
 In this proof it will be convenient to set 
 $\mathcal{B}=c^*\hat{ \mathcal T}_X \otimes \mathcal O_{\mathbb P^1}(-1-\rho)$.
 We have an exact sequence $0 \to \hat{\mathcal T}_X  \to \mathcal O_{\mathbb P^{n-1}}(1)^n \to \mathcal O_{\mathbb P^{n-1}}(d) \to 0$, with the last map given by multiplication by the gradient of $f$. Thus we obtain  an exact sequence 
 \[0 \to \mathcal{B} \to c^* \mathcal O_{\mathbb P^{n-1}}(1)^n \otimes \mathcal O_{\mathbb P^1}(-1-\rho) \to c^* \mathcal O_{\mathbb P^{n-1}}(d)  \otimes \mathcal O_{\mathbb P^1}(-1-\rho) \to 0 \] which simplifies to 
 \[
 0 \to 
 \mathcal{B} \to \mathcal O_{\mathbb P^1}(e-1-\rho)^n \to \mathcal O_{\mathbb P^1}(de-1-\rho)  \to 0 ,\]
 because $c$ has degree $e$. 
 Applying the cohomology long exact sequence, we see that  
 $ H^0(\mathbb P^1, \mathcal{B}) $ is the kernel of the natural map 
 $$
 H^0(\mathbb P^1,  \mathcal O_{\mathbb P^1}(e-1-\rho) ^n) \to H^0( \mathbb P^1,  \mathcal O_{\mathbb P^1}(de-1-\rho)),$$ 
 given by multiplication by the gradient of $f$.
 Since $H^0(\mathbb P^1,  \mathcal O_{\mathbb P^1}(e-1-\rho) ^n)=H^0(\mathbb P^1,  \mathcal O_{\mathbb P^1}(e-1-\rho) )^n$ is the space of $n$-tuples of polynomials of degree at most $e-1-\rho$, this is exactly the stated space. 
 \end{proof}
 
 We now assume $K= \mathbb F_q$ is a finite field. Thus  $f\in \FF_q[x_1,\dots,x_n]$  is a non-singular form of degree $d\geq 3$. We assume throughout that $\mathrm{char}(\FF_q)>d$.

\begin{defi}\label{def:1}Let $N(q,e,f)$ be the number of tuples of $n$ polynomials $g_1,\dots,g_n$ over $\mathbb F_q$, of degree at most $e$, at least one of degree exactly $e$, with no common zero, such that $f(g_1,\dots,g_n)=0$.\end{defi}

\begin{defi}\label{def:Nrho} For each integer $\rho$, let $N_\rho(q,e,f)$ be the number of pairs of a tuple of polynomials $g_1,\dots,g_n$ over $\mathbb F_q$, of degree at most $e$, at least one of degree exactly $e$, with no common zero and a tuple of polynomials $h_1,\dots,h_n$ over $\mathbb F_q$, of degree at most $e-1 - \rho$, such that \eqref{eq:system} holds. 
\end{defi}

\begin{prop}\label{unfree-bound} \begin{enumerate}
\item The number of $\mathbb F_q$-points on $\mathcal M_{0,0}(X,e)$ is \[ \frac{N(q,e,f)}{(q-1) 
(q^{3}-q)} .\] 

\item The number of $\mathbb F_q$-points on $Z_{\rho}$ is at most \[  \frac{ N_\rho(q,e,f)  q^{\rho (n-1)   - e(n-d) } - N(q,e,f)}{(q-1)^2(q^3-q) } . \]

\end{enumerate}\end{prop}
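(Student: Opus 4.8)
The plan is to relate the counting functions $N(q,e,f)$ and $N_\rho(q,e,f)$ to point counts on the moduli space $\mathcal M_{0,0}(X,e)$ by quotienting out the redundancy in the polynomial representation of a rational curve. First I would treat part (1). A tuple $\bfg = (g_1,\dots,g_n)$ as in Definition \ref{def:1} determines a morphism $c : \mathbb P^1 \to X$ of degree $e$ together with a choice of isomorphism $c^*\mathcal O_X(1) \cong \mathcal O_{\mathbb P^1}(e)$, which amounts to a choice of basis of the two-dimensional space $H^0(\mathbb P^1, c^*\mathcal O_X(1))$, i.e. a choice of the pair of linear forms underlying the degree $e$ parametrization. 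Concretely, two tuples $\bfg$ and $\bfg'$ give the same point of $\mathcal M_{0,0}(X,e)$ precisely when they differ by the combined action of $\mathrm{GL}_2$ (reparametrizing the source $\mathbb P^1$) and $\mathbb G_m$ (rescaling all coordinates simultaneously), and this action is free on the locus of tuples with no common zero and at least one of degree exactly $e$. Since $\#\mathrm{GL}_2(\mathbb F_q) = (q^2-1)(q^2-q) = q(q-1)^2(q+1)$ and $\#\mathbb G_m(\mathbb F_q) = q-1$, but the scalar matrices in $\mathrm{GL}_2$ overlap the $\mathbb G_m$-action up to the $d$-th power map... more cleanly: the stabilizer computation is exactly the one carried out in the Fermat example above, giving that $\mathcal M_{0,0}(X,e)$ is a $(\mathrm{GL}_2 \times \mathbb G_m)/\Delta$-bundle — of order $(q-1)(q^3-q)$ — over which the count fibers, so $\#\mathcal M_{0,0}(X,e)(\mathbb F_q) = N(q,e,f)/((q-1)(q^3-q))$. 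I should double-check which normalization of the quotient gives the denominator $(q-1)(q^3-q)$ rather than $(q^2-1)(q^2-q)(q-1)$; the point is that rescaling the $g_i$ by $\lambda$ and acting by the scalar matrix $\lambda^{-1}$ on $\mathbb P^1$ is the same operation, so the effective group has order $(q-1) \cdot q(q-1)(q+1) = (q-1)(q^3-q)$.

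For part (2), I would proceed similarly but now count pairs $(\bfg, \bfh)$ where $\bfh$ ranges over solutions to the linear system $\nabla f(\bfg)\cdot \bfh = 0$ with $\deg h_i \leq e-1-\rho$. By Lemma \ref{polynomial-expression}, for a fixed $\bfg$ representing $c$, the space of such $\bfh$ is exactly $H^0(\mathbb P^1, c^*\hat{\mathcal T}_X \otimes \mathcal O_{\mathbb P^1}(-1-\rho))$, a vector space over $\mathbb F_q$ whose dimension is, by Lemma \ref{dimension-free}, at least $e(n-d) - \rho(n-1)$, with equality if and only if $c$ is strongly $\rho$-free. Hence the number of $\bfh$ for a given $\bfg$ is $q^{\dim H^0} \geq q^{e(n-d)-\rho(n-1)}$, with equality iff $c$ is strongly $\rho$-free. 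Therefore
\begin{equation*}
N_\rho(q,e,f) = \sum_{\bfg} q^{\dim H^0(\mathbb P^1,\, c_{\bfg}^*\hat{\mathcal T}_X\otimes \mathcal O_{\mathbb P^1}(-1-\rho))} \geq \sum_{\bfg : c_{\bfg} \text{ strongly } \rho\text{-free}} q^{e(n-d)-\rho(n-1)},
\end{equation*}
and subtracting off the strongly-$\rho$-free contribution, $N_\rho(q,e,f) \, q^{\rho(n-1)-e(n-d)} - \#\{\bfg : c_{\bfg} \text{ strongly } \rho\text{-free}\} \geq \#\{\bfg : c_{\bfg} \text{ not strongly } \rho\text{-free}\}$, since every non-strongly-$\rho$-free $\bfg$ contributes a factor $q^{\dim H^0}$ with $\dim H^0 \geq e(n-d)-\rho(n-1)+1$, hence contributes at least $q \geq 2$ to the left side while contributing $1$ to the right. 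By Lemma \ref{stronger-than}, not $\rho$-free implies not strongly $\rho$-free, so $\#\{\bfg : c_{\bfg} \in Z_\rho\} \leq \#\{\bfg : c_{\bfg} \text{ not strongly } \rho\text{-free}\}$. Combining, $\#\{\bfg : c_{\bfg} \in Z_\rho\} \leq N_\rho(q,e,f)\, q^{\rho(n-1)-e(n-d)} - N(q,e,f)$, using that the number of strongly-$\rho$-free $\bfg$ is at most the number of all $\bfg$, which is $N(q,e,f)$.

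Finally I would divide by the size of the redundancy group to pass from tuples $\bfg$ to points of $Z_\rho \subseteq \mathcal M_{0,0}(X,e)$. Here the relevant group acting on the pairs $(\bfg,\bfh)$ — or rather just on the $\bfg$, since once we have divided by $q^{e(n-d)-\rho(n-1)}$ we are effectively counting $\bfg$'s weighted — introduces a denominator $(q-1)^2(q^3-q)$ rather than $(q-1)(q^3-q)$; the extra factor of $(q-1)$ arises because rescaling $\bfg$ by $\lambda$ forces a compensating rescaling of $\bfh$ and of the normalization factor $q^{\rho(n-1)-e(n-d)}$ in a way that is already accounted for, so that the clean statement is $\#Z_\rho(\mathbb F_q) \leq (N_\rho(q,e,f)\, q^{\rho(n-1)-e(n-d)} - N(q,e,f))/((q-1)^2(q^3-q))$. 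The main obstacle I anticipate is bookkeeping the group actions and stabilizers precisely — in particular verifying that the $\mathrm{GL}_2 \times \mathbb G_m$ action is free on the relevant locus (no common zero, at least one $g_i$ of degree exactly $e$), reconciling the scalar overlap to get the correct order $(q-1)(q^3-q)$, and tracking the one extra power of $q-1$ in part (2) that comes from the interaction of the scaling action with the linear space of $\bfh$'s. Everything else is a direct consequence of Lemmas \ref{stronger-than}, \ref{dimension-free}, and \ref{polynomial-expression}.
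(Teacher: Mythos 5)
Your part (1) and the setup of part (2) --- weighting each tuple $\g$ by $q^{\dim H^0}$ and invoking Lemmas \ref{polynomial-expression}, \ref{dimension-free} and \ref{stronger-than} --- follow the paper's route. But the final accounting in part (2) has a genuine gap, and it sits exactly where the factor $(q-1)^2$ (rather than $(q-1)$) in the denominator must come from. Write $S$ for the number of tuples $\g$ with $c_{\g}$ strongly $\rho$-free and $T$ for the number with $c_{\g}$ not strongly $\rho$-free, so that $S+T=N(q,e,f)$. Your inequality $N_\rho(q,e,f)\,q^{\rho(n-1)-e(n-d)}-S\geq T$ is true, but combining it with $S\leq N(q,e,f)$ to conclude $T\leq N_\rho(q,e,f)\,q^{\rho(n-1)-e(n-d)}-N(q,e,f)$ runs the implication the wrong way: $S\leq N$ gives $N_\rho q^{\rho(n-1)-e(n-d)}-S\geq N_\rho q^{\rho(n-1)-e(n-d)}-N$, i.e.\ you have bounded $T$ by the \emph{larger} of the two quantities, which says nothing about the smaller one.

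The correct move is to retain the full weight $q$ carried by each non-strongly-free tuple:
\[
N_\rho(q,e,f)\,q^{\rho(n-1)-e(n-d)}\;\geq\; qT+S \;=\; N(q,e,f)+(q-1)T,
\]
whence $T\leq \bigl(N_\rho(q,e,f)\,q^{\rho(n-1)-e(n-d)}-N(q,e,f)\bigr)/(q-1)$. This division by $q-1$ is the true source of the extra factor in the denominator; it does not arise from any interaction of the scaling action with the space of $\h$'s, as you suggest at the end --- the $\h$'s are already absorbed into the weight $q^{\dim H^0}$, and the group acting on the $\g$'s is the same as in part (1), contributing $(q-1)(q^3-q)$ and nothing more. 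With the inequality above in hand, Lemma \ref{stronger-than} lets you pass from $Z_\rho$ to the non-strongly-free locus, and dividing $T$ by the $(q-1)(q^3-q)$ tuples per moduli point gives the stated bound with denominator $(q-1)^2(q^3-q)$.
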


\begin{proof} Each point of $\mathcal M_{0,0}(X,e)$ corresponds to $|\mathrm{PGL}_2(\mathbb F_q)| = q^3-q$ distinct maps $\mathbb P^1 \to X$. Thus in (1) we will count the number of maps $\mathbb P^1 \to X$, and in (2) we will count the number of maps $\mathbb P^1 \to X$ that are not $\rho$-free, and  in each case then divide by $q^3-q$. 

For (1), it is sufficient to note that for any such tuple $\bfg$, $(g_1: \dots : g_n)$ are the projective coordinates of a degree $e$ map $\mathbb P^1 \to X$. All such maps arise this way, and two tuples define the same map if and only if one is the multiple of the other by a non-zero scalar. 

For (2), it follows from  Lemma \ref{stronger-than} that it suffices to consider the space of degree $e$ maps  $c:\PP^1\to X$ that are not strongly $\rho$-free. 
Note that $N_{\rho}(q,e,f)$ is the sum over tuples of polynomials $(g_1, \dots , g_n)$, defining maps $c$, of $q$ raised to the dimension of the vector space of possible $h_1,\dots,h_n$. By Lemma \ref{polynomial-expression} this exponent is 
$$\dim  H^0(\mathbb P^1, c^* \hat{\mathcal T}_X \otimes \mathcal O_{\mathbb P^1}(-1-\rho)).
$$ 
By Lemma \ref{dimension-free},   $q$ to the power of this dimension is equal to $q^{  e(n-d)-  \rho (n-1) }$ if $c$ is strongly $\rho$-free and is at least $q^{  e(n-d) - \rho(n-1) +1}$ otherwise. Hence \begin{align*}
N_\rho(q,e,f)  q^{ \rho (n-1)   - e(n-d)}
&\geq 
\sum_{\substack{
\g\in \FF_q[T]^n\\
|\g|=e\\
\text{$c$ not strongly $\rho$-free}}} 
\hspace{-0.3cm}
q+
\sum_{\substack{
\g\in \FF_q[T]^n\\
|\g|=e\\
\text{$c$ strongly $\rho$-free}}} 
\hspace{-0.3cm} 1
\\
&=N(q,e,f)+(q-1)
\sum_{\substack{
\g\in \FF_q[T]^n\\
|\g|=e\\
\text{$c$ not strongly $\rho$-free}}} 
\hspace{-0.3cm}
1.
\end{align*}
The proposition  follows on noting  that there  are $(q-1)$ tuples $\g$ for each map $c:\PP^1\to X$.
\end{proof}

\section{The circle method: identification of major arcs}\label{diophantine-exponential}

For $e\geq 1$
we have 
$$
N(q,e,f)=\#\{\g\in \FF_q[T]^n:   |\g|=q^e, ~f(\g)=0, ~\gcd(g_1,\dots,g_n)=1\},
$$
where $\g=(g_1,\dots,g_n)$ and $|\g|=\max_{1\leq i\leq n}|g_i|$.
In particular only non-zero vectors $\g$ occur. 
Similarly, we may write
$$
N_\rho(q,e,f)=
\sum_{\substack{\g\in \FF_q[T]^n\\    |\g|=q^e\\  f(\g)=0\\ \gcd(g_1,\dots,g_n)=1}}
 \sum_{\substack{\h\in \FF_q[T]^n\\
|\h|< q^{e-\rho}\\ 
\h.\nabla f(\g)=0 }}1,
$$
where once again we note that only non-zero vectors $\g$ occur.
We may use
the function field analogue of the 
M\"obius function
$\mu:\FF_q[T]\to \{0,\pm 1\}$ to detect the coprimality condition
$\gcd(g_1,\dots,g_n)=1$. This gives
\begin{align*}
N_\rho(q,e,f)
&=
\sum_{\substack{k\in \FF_q[T]\\ 
\text{$k$ monic}}} \mu(k) 
\sum_{\substack{
\g\in \FF_q[T]^n\\  
 0<|\g|=q^e/|k|\\
 f(\g)=0}}
\sum_{\substack{\h\in \FF_q[T]^n\\
|\h|< q^{e-\rho}\\ 
\h.\nabla f(\g)=0 }}1\\
&=
\sum_{j\geq 0}
\sum_{\substack{k\in \FF_q[T]\\ 
|k|=q^j\\
\text{$k$ monic}}} \mu(k) 
\sum_{\substack{
\g\in \FF_q[T]^n\\  
 0<|\g|=q^{e-j}\\
 f(\g)=0}}
\sum_{\substack{\h\in \FF_q[T]^n\\
|\h|< q^{e-\rho}\\ 
\h.\nabla f(\g)=0 }}1.
\end{align*}
In view of the elementary identity
\begin{equation}
\label{eq:elementary}
\sum_{\substack{k\in \FF_q[T]\\ 
|k|=q^j\\
\text{$k$ monic}}} \mu(k) =\begin{cases}
1 & \text{ if $j=0$,}\\
-q & \text{ if $j=1$,}\\
0 & \text{ if $j>1$,}
\end{cases}
\end{equation}
it readily follows that 
$$
N_\rho(q,e,f)
=
\sum_{j\geq 0}c_j N(e-j+1,e-\rho),
$$
where
\begin{equation}\label{eq:cj}
c_j
=\begin{cases}
1 & \text{ if $j=0$,}\\
-(q+1) & \text{ if $j=1$,}\\
q & \text{ if $j=2$,}\\
0 & \text{ if $j>2$}
\end{cases}
\end{equation}
and 
$$
N(u,v)=
\sum_{\substack{
\g\in \FF_q[T]^n\\  
0< |\g|<q^{u}\\
 f(\g)=0}}
\sum_{\substack{\h\in \FF_q[T]^n\\
|\h|< q^{v}\\ 
\h.\nabla f(\g)=0 }}1,
$$
for any integers $u,v\geq 1$.

We have 
$$
\sum_{\substack{\h\in \FF_q[T]^n\\
|\h|< q^{e-\rho}\\ 
\h.\nabla f(\g)=0 }}1
=
\int_{\TT} S(\beta)\d \beta,
$$
where
$$
S(\beta)=
\sum_{\substack{\h\in \FF_q[T]^n\\  |\h|< q^{e-\rho}}}
\psi(\beta\h.\nabla f(\g)).
$$
Here the integral is over the space $\TT$ of formal Laurent series in $T^{-1}$ of degree less than $0$, against the Haar measure with total mass $1$, and $\psi$ is the additive character of $\mathbb F_q((T^{-1}))$ that sends a formal Laurent series in $T^{-1}$ to a fixed non-trivial additive character of $\mathbb F_q$ applied to the coefficient of $T^{-1}$.
With this notation we now have 
\begin{equation}\label{eq:full-integral}
N_\rho(q,e,f)
=
\sum_{j\geq 0}c_j
\sum_{\substack{
\g\in \FF_q[T]^n\\  
0< |\g|<q^{e-j+1}\\
 f(\g)=0}}
 \int_{\TT} S(\beta)\d \beta.
\end{equation}
Our plan will be to define a set of major arcs whose total contribution to 
$q^{\rho(n-1)-e(n-d)}N_\rho(q,e,f)$ is matched by $N(q,e,f).$
We note that the sum over $\g$ is empty unless $e\geq j$, so we will be able to assume this whenever dealing with this sum. 

In what follows  we shall frequently make use of the  basic orthogonality property
\begin{equation}\label{eq:orthog}
 \sum_{\substack{b\in \FF_q[T]\\ |b|<q^B}} \psi(\gamma b) =\begin{cases}
 q^B & \text{ if $\|\gamma\|<q^{-B}$,}\\
 0 & \text{ otherwise,}
 \end{cases}
\end{equation}
which is valid for any integer $B\geq 0$ and any $\gamma\in \FF_q((T^{-1})).$
Here
we recall that 
 $\|\gamma\|=|\sum_{i\leq -1}b_iT^i|$ for any $\gamma=\sum_{i\leq N}b_i T^i \in \FF_q((T^{-1}))$.

Let $\g\in \FF_q[T]^n$ be a non-zero vector such that $f(\g)=0$.
The next result is the first step  towards defining the relevant set of major arcs for our problem.

\begin{lemma}\label{lem:1}
Suppose that $\beta=a/r+\theta$ for coprime polynomials $a,r\in \FF_q[T]$ such that $|a|<|r|\leq q^{e-\rho}$.
Assume that $|r\theta|<q^{-(d-1)(e-j)}$.
Then 
$$
S(\beta)=\begin{cases}
q^{n(e-\rho)} &\text{ if $r\mid \gcd(g_1,\dots,g_n)^{d-1}$ and $|\theta|<q^{\rho-e}/|\g|^{d-1}$,}\\
0 &\text{ otherwise}.
\end{cases}
$$
\end{lemma}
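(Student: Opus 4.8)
The plan is to evaluate $S(\beta)$ directly by separating variables. First I would write $\beta = a/r + \theta$ and split the sum over $\h$ with $|\h| < q^{e-\rho}$ according to residues modulo $r$: writing $\h = \a + r\b$ with $\a$ running over residues mod $r$ and $\b$ running over polynomials with $|\b| < q^{e-\rho}/|r|$ (this is a clean partition since $|r| \le q^{e-\rho}$). Then $\psi(\beta\, \h.\nabla f(\g))$ factors, because $\psi((a/r)(r\b).\nabla f(\g)) = \psi(a \b.\nabla f(\g)) = 1$ as $a\b.\nabla f(\g) \in \FF_q[T]$, so the $a/r$ part only sees $\a$ and the $\theta$ part splits into an $\a$-contribution and a $\b$-contribution. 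Concretely $S(\beta) = \big(\sum_{\a \bmod r} \psi((a/r)\a.\nabla f(\g))\, (\text{small }\theta\text{-phase in }\a)\big)\big(\sum_{|\b| < q^{e-\rho}/|r|} \psi(\theta (r\b).\nabla f(\g))\big)$.

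The key simplification comes from the hypothesis $|r\theta| < q^{-(d-1)(e-j)}$. Since $\nabla f(\g)$ has entries of degree at most $(d-1)(e-j)$ (each $\partial f/\partial x_i$ is degree $d-1$, evaluated at $\g$ with $|\g| = q^{e-j}$), we get $\|\theta\, \a.\nabla f(\g)\| < q^{-1} \cdot \frac{|r|}{|a|}$-type bounds forcing the $\theta$-phase attached to $\a$ to be trivial — more precisely, $|\theta\,\a.\nabla f(\g)| \le |\theta| |r| q^{(d-1)(e-j)} < 1$, so $\psi$ of it is $1$. So the $\a$-sum collapses to the pure Gauss-type sum $\sum_{\a \bmod r} \psi((a/r)\,\a.\nabla f(\g))$, which by the orthogonality relation \eqref{eq:orthog} applied coordinatewise equals $|r|^n$ if $r \mid \frac{\partial f}{\partial x_i}(\g)$ for every $i$, and $0$ otherwise. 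The condition ``$r \mid \partial f/\partial x_i(\g)$ for all $i$'' is where I expect the main work: I need to show, using that $f$ is nonsingular and $\gcd(g_1,\dots,g_n)$ controls things, that this is equivalent to $r \mid \gcd(g_1,\dots,g_n)^{d-1}$. The forward direction is Euler's identity $\sum x_i \frac{\partial f}{\partial x_i} = d f$ combined with the nonsingularity of $f$ (so the ideal generated by the partials contains a power of each variable, hence $\gcd(\g)^{D} \in (\partial_1 f(\g),\dots,\partial_n f(\g))$ for suitable $D$, and one checks $D = d-1$ works, perhaps up to adjusting — this is the routine but slightly delicate commutative-algebra point); the reverse uses that each $\partial_i f$ is homogeneous of degree $d-1$ so $\partial_i f(\g)$ is divisible by $\gcd(\g)^{d-1}$.

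For the $\b$-sum, $\sum_{|\b| < q^{e-\rho}/|r|} \psi(\theta (r\b).\nabla f(\g))$, I apply \eqref{eq:orthog} again in each coordinate: this equals $(q^{e-\rho}/|r|)^n$ if $\|\theta r\, \partial_i f(\g)\| < |r| q^{\rho - e}$ for all $i$, i.e. (since $\theta r \partial_i f(\g)$ has no constant-or-higher part already, given $|r\theta| < q^{-(d-1)(e-j)} \le 1/|\partial_i f(\g)|$) the condition is automatically about sizes: $|\theta||r| |\partial_i f(\g)| < |r| q^{\rho-e}$, i.e. $|\theta| < q^{\rho-e}/\max_i|\partial_i f(\g)|$. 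Here I use homogeneity once more: $\max_i |\partial_i f(\g)| = |\g|^{d-1}$ up to the subtlety that some partial might have smaller degree, but nonsingularity of $f$ guarantees the partials have no common projective zero, so on the support where $f(\g)=0$ at least one $\partial_i f(\g)$ attains degree exactly $(d-1)\deg(\g)$ — giving exactly the stated condition $|\theta| < q^{\rho-e}/|\g|^{d-1}$. Multiplying the two factors gives $|r|^n \cdot (q^{e-\rho}/|r|)^n = q^{n(e-\rho)}$ in the surviving case and $0$ otherwise, which is the claim. The main obstacle, as noted, is pinning down the exponent $d-1$ in the divisibility condition $r \mid \gcd(\g)^{d-1}$ — matching the trivial upper bound from homogeneity with a matching lower bound requires care with the Nullstellensatz applied to the nonsingular form $f$ and keeping track of the precise power; everything else is bookkeeping with \eqref{eq:orthog}.
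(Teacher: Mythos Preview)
Your approach is the paper's: split $\h=\a+r\b$, evaluate each factor via the orthogonality relation \eqref{eq:orthog}, and invoke nonsingularity of $f$ both for $|\nabla f(\g)|=|\g|^{d-1}$ and for the equivalence $r\mid\nabla f(\g)\Leftrightarrow r\mid\gcd(\g)^{d-1}$. Two minor corrections: the bound $|\theta\,\a.\nabla f(\g)|<1$ does \emph{not} force $\psi=1$ (you need the $T^{-1}$-coefficient to vanish, i.e.\ $<q^{-1}$), but since $|\a|<|r|$ means $|\a|\le|r|/q$ in this norm your estimate immediately sharpens to $<q^{-1}$ with no other change; and the divisibility equivalence you flag as the ``main obstacle'' is in fact one line rather than a Nullstellensatz argument --- for each prime $p$ write $\g=p^{m}\g'$ with $m=v_p(\gcd\g)$ and $p\nmid\gcd(\g')$, so $\partial_i f(\g)=p^{(d-1)m}\partial_i f(\g')$, and if $v_p(r)>(d-1)m$ then $\bar\g'\neq 0$ would be a common zero of all $\partial_i f$ over the residue field $\FF_q[T]/(p)$, contradicting nonsingularity.
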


\begin{proof}
We break the sum into residue classes modulo $r$, by writing 
$\h=\u+r\mathbf{v}$ for $|\u|<|r|$ and $|\mathbf{v}|<q^{e-\rho}/|r|$.
Then 
$$
S(\beta)=
\sum_{\substack{\u\in \FF_q[T]^n\\  |\u|< |r|}}
\psi(\beta\u.\nabla f(\g)) 
\sum_{\substack{\mathbf{v}\in \FF_q[T]^n\\  |\mathbf{v}|< q^{e-\rho}/|r|}}
\psi(r\theta \mathbf{v}.\nabla f(\g)) 
$$
Since 
$|r\theta|<q^{-(d-1)(e-j)}$ we have 
$|r\theta \nabla f(\g)|\leq |r\theta| q^{(d-1)(e-j)}<1$. Thus 
$\|r\theta \nabla f(\g)\| =|r\theta \nabla f(\g)|$ and
it follows from \eqref{eq:orthog} that 
$$
\sum_{\substack{\mathbf{v}\in \FF_q[T]^n\\  |\mathbf{v}|< q^{e-\rho}/|r|}}
\psi(r\theta \mathbf{v}.\nabla f(\g)) =
\begin{cases}
|r|^{-n}q^{n(e-\rho)} & \text{ if $| \theta \nabla f(\g)|<q^{\rho-e}$,}\\
0 & \text{ otherwise}.
\end{cases}
$$
We claim that 
$|\nabla f (\g)|=|\g|^{d-1}$.
To see this suppose that 
$|\g|=q^{m}$ for a non-negative integer $m$ and let 
 $\g^*\in \FF_q^n$ be the (non-zero) leading coefficient of $\g$.  
 In particular $f(\g^*)=0$ since $f(\g)=0$.
 Since  $f$ has degree $d$  it follows that 
 the coefficient of $T^{m(d-1)}$ in 
$\nabla f(\g)$ is 
 $\nabla f(\g^*)\neq \0$, since 
 $f$ is non-singular.
 
Our argument so far shows that 
$$
S(\beta)=
\begin{cases}
|r|^{-n}q^{n(e-\rho)}T(\beta) & \text{ if $| \theta |<q^{\rho-e}/|\g|^{d-1}$,}\\
0 & \text{ otherwise},
\end{cases}
$$
where
$$
T(\beta)=
\sum_{\substack{\u\in \FF_q[T]^n\\  |\u|< |r|}}
\psi(\beta\u.\nabla f(\g)).
$$
When $| \theta |<q^{\rho-e}/|\g|^{d-1}$ it follows that 
$$
|\theta \u.\nabla f(\g)| \leq q^{-1}|\theta r \nabla f(\g)|
\leq 
q^{-2+\rho-e} |r| \leq q^{-2},
$$
since $|r|\leq q^{e-\rho}$. Hence, since $a$ and $r$ are coprime, we deduce that
$$T(\beta)=
\sum_{\substack{\u\in \FF_q[T]^n\\  |\u|< |r|}}
\psi\left(\frac{a\u.\nabla f(\g)}{r}\right) =
\begin{cases}
|r|^n &\text{ if $r\mid \nabla f(\g)$,}\\
0 & \text{ otherwise},
\end{cases}
$$
Since $f$ is a non-singular form, the statement of the lemma follows on noting that 
$r\mid \nabla f(\g)$ if and only if $r\mid \gcd(g_1,\dots,g_n)^{d-1}$.
\end{proof}

\begin{lemma}\label{lem:disjoint'} Suppose  that $e\geq \rho$ and \[ \frac{a_1}{r_1} + \theta_1 = \frac{a_2}{r_2} + \theta_2 ,\] with $r_1,r_2\mid \gcd(g_1,\dots,g_n)^{d-1}$ and $|\theta_1|, |\theta_2|<q^{\rho-e}/|\g|^{d-1}.$
 Then in fact $\frac{a_1}{r_1} = \frac{a_2}{r_2}$ (and so $\theta_1=\theta_2$).
\end{lemma}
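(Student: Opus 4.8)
The plan is to argue by contradiction: suppose $\theta_1\neq\theta_2$, which by the relation $\frac{a_1}{r_1}+\theta_1=\frac{a_2}{r_2}+\theta_2$ is the same as $\frac{a_1}{r_1}\neq\frac{a_2}{r_2}$. Write $g=\gcd(g_1,\dots,g_n)$; this is a non-zero polynomial since $\g\neq\0$, and in particular $r_1,r_2\neq 0$ because they divide the non-zero polynomial $g^{d-1}$. Set $m=\operatorname{lcm}(r_1,r_2)$. The first observation is that since $r_1\mid g^{d-1}$ and $r_2\mid g^{d-1}$, their least common multiple $m$ also divides $g^{d-1}$; combined with $|g|\leq|\g|$ (valid because $g$ divides each coordinate $g_i$ and at least one coordinate is non-zero), this gives $|m|\leq|g|^{d-1}\leq|\g|^{d-1}$.

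Next I would clear denominators. Put $\delta=\theta_1-\theta_2=\frac{a_2}{r_2}-\frac{a_1}{r_1}$, so $\delta\neq 0$ by assumption. Multiplying by $m$ yields $m\delta=a_2\,(m/r_2)-a_1\,(m/r_1)\in\FF_q[T]$, a non-zero polynomial, hence $|m\delta|\geq 1$ and therefore $|\delta|\geq|m|^{-1}\geq|\g|^{-(d-1)}$. On the other hand, the ultrametric inequality on $\FF_q((T^{-1}))$ together with the hypothesis on $\theta_1,\theta_2$ gives $|\delta|\leq\max(|\theta_1|,|\theta_2|)<q^{\rho-e}/|\g|^{d-1}$, and here the assumption $e\geq\rho$ forces $q^{\rho-e}\leq 1$, so $|\delta|<|\g|^{-(d-1)}$. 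This contradicts the lower bound just obtained, so $\theta_1=\theta_2$; substituting back into the displayed identity gives $\frac{a_1}{r_1}=\frac{a_2}{r_2}$, as required.

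The argument is short, and the step I expect to require the most care is the choice to clear denominators by $m=\operatorname{lcm}(r_1,r_2)$ rather than by $r_1r_2$. Clearing by $r_1r_2$ would only give $|\delta|\geq|r_1r_2|^{-1}\geq|\g|^{-2(d-1)}$, which is too weak to contradict $|\delta|<|\g|^{-(d-1)}$ without an additional hypothesis such as $|\g|^{d-1}\leq q^{e-\rho}$. The point that $\operatorname{lcm}(r_1,r_2)$ still divides $g^{d-1}$, and hence has absolute value at most $|\g|^{d-1}$, is exactly what keeps the bound sharp enough to conclude using only $e\geq\rho$.
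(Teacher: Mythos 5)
Your proof is correct and is essentially the paper's argument: both clear denominators to a common denominator dividing $\gcd(g_1,\dots,g_n)^{d-1}$ (the paper uses $\gcd(g_1,\dots,g_n)^{d-1}$ itself, you use the lcm of $r_1,r_2$), and both conclude that the resulting non-zero polynomial would have absolute value $<1$ via $|\gcd(g_1,\dots,g_n)|\leq|\g|$ and $q^{\rho-e}\leq 1$. Your closing remark about why $r_1r_2$ would be too lossy is exactly the right observation, but the route is the same.
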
 

\begin{proof} By clearing denominators, we may assume $r_1 = r_2 = \gcd(g_1,\dots,g_n)^{d-1}$. Then 
$a_1 -a_2 =  \gcd(g_1,\dots,g_n)^{d-1} (\theta_2-\theta_1)$, so that 
$$
|a_1-a_2|  < q^{ \rho-e}  \frac{   \gcd(g_1,\dots,g_n)^{d-1} }{ |\g|^{d-1}} \leq q^{\rho-e} \leq 1. 
$$
This implies that  $a_1=a_2$, as required. 
\end{proof} 

We take as major arcs the union
\begin{equation}\label{eq:major-beta}
\mathfrak{N}_j=\bigcup_{\substack{r\in \FF_q[T] \text{ monic}\\ 
|r|\leq q^{e-\rho}
}}
\bigcup_{\substack{|a|<|r|\\ \gcd(a,r)=1}} \left\{\beta\in \FF_q((T^{-1})): \left|r\beta-a\right|<q^{-(d-1)(e-j)}\right\},
\end{equation}
for $j\geq 0$.
It follows from Lemma \ref{lem:1} that $S(\beta)$ is non-zero for $\beta\in\mathfrak{N}_j$ if and only if there is some pair $(a/r,\theta)$ such that $\beta = a/r+\theta$ and all the conditions 
$$
|r| \leq q^{e-\rho}, \quad  |\theta|< |r|^{-1}q^{-(d-1)(e-j)}, \quad  |a|<|r|, \quad \gcd(a,r)=1,$$
and 
$$ 
 r\mid \gcd(g_1,\dots,g_n)^{d-1},  \quad |\theta|<
q^{\rho-e}/|\g|^{d-1}
$$ 
are satisfied.  By Lemma \ref{lem:disjoint'},  pairs  satisfying these conditions
(or even the last three conditions) 
are unique. Hence we can rewrite the integral over the major arcs as 
\begin{align*}
\int_{\mathfrak{N}_j}S(\beta)\d \beta
&=
q^{n(e-\rho)} 
\hspace{-0.3cm}
\sum_{\substack{|r|\leq q^{e-\rho}\\ \text{$r$ monic}\\ r\mid \gcd(g_1,\dots,g_n)^{d-1}}} 
\sum_{\substack{ |a| < |r| \\ \gcd(a,r)=1 }}
\hspace{-0.3cm}
\int_{|\theta|<\min\{ 
q^{\rho-e}/|\g|^{d-1}, ~
|r|^{-1}q^{-(d-1)(e-j)}\}} \d \theta \\ 
&=
q^{n(e-\rho)} 
\hspace{-0.3cm}
\sum_{\substack{|r|\leq q^{e-\rho}\\ \text{$r$ monic}\\ r\mid \gcd(g_1,\dots,g_n)^{d-1}}} 
\hspace{-0.3cm}
\phi(r)
\int_{|\theta|<\min\{ 
q^{\rho-e}/|\g|^{d-1}, ~
|r|^{-1}q^{-(d-1)(e-j)}\}} \d \theta,
\end{align*}
for any non-zero vector $\g\in \FF_q[T]^n$ such that $f(\g)=0$,
where
$\phi(r)$ is the function field analogue of the Euler totient function.
We want to replace the integral over $\theta$ by 
$$
\int_{|\theta|<
q^{\rho-e}/|\g|^{d-1}} \d \theta=\frac{q^{\rho-e}}{|\g|^{d-1}}.
$$
The  error in doing this is at most this volume multiplied by the indicator function for the inequality
$$
|r|^{-1}q^{-(d-1)(e-j)}< q^{\rho-e}/|\g|^{d-1}.
$$
Since $r\mid \gcd(g_1,\dots,g_n)^{d-1}$   this inequality implies that 
\begin{equation}\label{eq:error-1}
q^{j+D+1}|\g|\leq |\gcd(g_1,\dots,g_n)|q^{e},
\end{equation}
where 
\begin{equation}\label{eq:D}
D=\left\lfloor\frac{e-\rho}{d-1}\right\rfloor.
\end{equation}
At this point we observe that 
$$
\sum_{\substack{\text{$r\in \FF_q[T]$ monic}\\ r\mid \gcd(g_1,\dots,g_n)^{d-1}}} 
\hspace{-0.3cm}
\phi(r)=|\gcd(g_1,\dots,g_n)|^{d-1},
$$
since $\g\neq \0$.
Note that when 
$r\mid \gcd(g_1,\dots,g_n)^{d-1}$ and $|r|>q^{e-\rho}$ we must have 
\begin{equation}\label{eq:error-2}
|\gcd(g_1,\dots,g_n)|\geq  q^{D+1},
\end{equation}
with $D$ as above.
Putting everything together it follows that
\begin{equation}\label{eq:major-estimate}
\int_{\mathfrak{N}_j}S(\beta)\d \beta
=
\frac{q^{(n-1)(e-\rho)} |\gcd(g_1,\dots,g_n)|^{d-1}}{|\g|^{d-1}}\left(1+\epsilon_j \mathbf{1}_j(\g)
\right)
\end{equation}
for $\epsilon_j\in [-1,1]$, 
where 
$$
\mathbf{1}_j(\g)=\begin{cases}
1 &\text{ if \eqref{eq:error-1} or \eqref{eq:error-2} hold,}\\
0 & \text{ otherwise}.
\end{cases}
$$

Let $N_{\rho}^{\text{major}}(q,e,f)$ denote the contribution 
to the right hand side 
of \eqref{eq:full-integral} from \eqref{eq:major-estimate} for each $j$.
We now see that 
\begin{align*}
N_{\rho}^{\text{major}}(q,e,f) &=
q^{(n-1)(e-\rho)}
\sum_{j\geq 0}c_j
\hspace{-0.3cm}
\sum_{\substack{
\g\in \FF_q[T]^n\\  
 0<|\g|<q^{e-j+1}\\
 f(\g)=0}}
 \frac{|\gcd(g_1,\dots,g_n)|^{d-1}}{|\g|^{d-1}}
 \left(1+\epsilon_j \mathbf{1}_j(\g)
\right).
\end{align*}
On noting that 
$(n-1)(e-\rho)-e(d-1)=e(n-d)-\rho(n-1)$,
the main term is seen to be 
\begin{align*}
q^{e(n-d)-\rho(n-1)}\left(
\tilde N(e)
-
q^{d} \tilde N(e-1)
\right),
\end{align*}
where for $u\geq 0$ we set
\begin{align*}\tilde N(u)
=\sum_{\substack{
\g\in \FF_q[T]^n\\  
 |\g|=q^{u}\\
 f(\g)=0}}
|\gcd(g_1,\dots,g_n)|^{d-1}
&=
\sum_{\substack{k\in \FF_q[T]\\
|k|\leq q^u\\
\text{$k$ monic}
}} |k|^{d-1}
N(q,u-\deg(k),f)\\
&=
\sum_{\ell=0}^u q^{d\ell }N(q,u-\ell,f),
\end{align*}
in the notation of Definition \ref{def:1}.
Hence
\begin{align*}
\tilde N(e)
-
q^{d} \tilde N(e-1)
	&=
\sum_{\ell=0}^e q^{d\ell }N(q,e-\ell,f)
-
\sum_{\ell=0}^{e-1} q^{d(\ell+1) }N(q,e-1-\ell,f)\\
&=N(q,e,f).
\end{align*}
\begin{remark} The cancellation here is not miraculous. The terms corresponding to $\g$ with $|\g|< q^e$ or $|\gcd(g_1,\dots,g_n)|>1$ disappear precisely because $c_j$ were the coefficients defined 
in \eqref{eq:cj}
to sieve out these terms in the first place. 
\end{remark} 

Turning to the error term we can combine \eqref{eq:error-1} and \eqref{eq:error-2}
to deduce that $\gcd(g_1,\dots,g_n)\geq q^{D+1}\min(1,q^{j-e}|\g|)=q^{D+1+j-e}|\g|$ whenever 
$\mathbf{1}_j(\g)=1$.
Hence
$$
N_{\rho}^{\text{major}}(q,e,f)-
q^{e(n-d)-\rho(n-1)}N(q,e,f)\leq  
q^{(n-1)(e-\rho)}
\sum_{j\geq 0}|c_j|
E_j,
$$
where
\begin{align*}
E_j
&=
\sum_{0\leq u\leq e-j} 
\sum_{\substack{k\in \FF_q[T] \text{ monic}\\
|k|\geq q^{D+1+j-e+u}}} \frac{|k|^{d-1}}{q^{u(d-1)}}
\#\left\{
\g\in \FF_q[T]^n: 
\begin{array}{l}
 |\g|=q^u, ~ f(\g)=0\\
k=\gcd(g_1\dots,g_n)
\end{array}
\right\}\\
&=
\sum_{0\leq u\leq e-j} 
\sum_{
\ell \geq D+1+j-e+u} \frac{q^\ell}{q^{(u-\ell)(d-1)}}
N(q,u-\ell,f).
\end{align*}
Invoking  \cite[Lemma 2.8]{BV},  we deduce that 
$
N(q,e,f)=O_f(q^{(e+1)(n-1)})
$ 
for any $n\geq 3$, where the implied constant depends at most on $f$. Hence,
since we may clearly assume that $n>d+1$, 
it follows that
\begin{equation}
\begin{split}
\label{eq:revisit}
E_j
&\ll_f
\sum_{0\leq u\leq e-j} q^{u(n-d)+n-1}
\sum_{
\ell \geq D+1+j-e+u} q^{-\ell(n-d-1)}\\
&\ll_f
q^{-(D+1)(n-d-1)}\sum_{0\leq u\leq e-j} \frac{q^{u(n-d)+n-1}}
{q^{(u-e+j)(n-d-1)}}\\
&\ll_f 
q^{
(e-j)(n-d)+n-1
-(D+1)(n-d-1)}.
\end{split}
\end{equation}
The implied constant in this estimate depends only on $f$ and not on $q$. 
Thus
$$
q^{(n-1)(e-\rho)}
\sum_{j\geq 0}|c_j|
E_j\ll_f
q^{2e(n-d)-\rho(n-1)+
de-e+n-1
-(D+1)(n-d-1)}.
$$
Putting everything together,  we may conclude as follows. 

\begin{lemma}\label{lem:pebble}
 Let $\rho\in \ZZ$ and assume that $e\geq \rho$. Then 
$$
N_{\rho}^{\text{major}}(q,e,f) 
=
q^{e(n-d)-\rho(n-1)} \left(N(q,e,f) +O_f(q^{(e+1)(n-1)-(D+1)(n-d-1)})\right),
$$
where
$D$ is given by \eqref{eq:D}.
\end{lemma}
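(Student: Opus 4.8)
The plan is to assemble the lemma directly from the computations carried out above; the proof is therefore a matter of careful bookkeeping. Recall that $N_{\rho}^{\text{major}}(q,e,f)$ is, by definition, the contribution to the right-hand side of \eqref{eq:full-integral} obtained by replacing each inner integral $\int_{\TT} S(\beta)\d\beta$ by its major-arc value $\int_{\mathfrak{N}_j}S(\beta)\d\beta$ as evaluated in \eqref{eq:major-estimate}. For each $j$ I would split this contribution into the main term coming from the ``$1$'' in $(1+\epsilon_j\mathbf{1}_j(\g))$ and the error term coming from $\epsilon_j\mathbf{1}_j(\g)$. Throughout, the hypothesis $e\geq\rho$ is used: it guarantees $D\geq 0$ and, via Lemma \ref{lem:disjoint'}, that the major-arc pairs $(a/r,\theta)$ are unique, so that \eqref{eq:major-estimate} is legitimate.

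For the main term, I would use the identity $\sum_{r\mid k^{d-1},\, r\text{ monic}}\phi(r)=|k|^{d-1}$ (valid since $\g\neq\0$) together with the numerical relation $(n-1)(e-\rho)-e(d-1)=e(n-d)-\rho(n-1)$ to rewrite it, after sorting the $\g$ by the degree of $\gcd(g_1,\dots,g_n)$ and invoking Definition \ref{def:1}, as $q^{e(n-d)-\rho(n-1)}\bigl(\tilde N(e)-q^d\tilde N(e-1)\bigr)$, where $\tilde N(u)=\sum_{\ell=0}^u q^{d\ell}N(q,u-\ell,f)$. The sum then telescopes to $\tilde N(e)-q^d\tilde N(e-1)=N(q,e,f)$, which is the claimed main term. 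As the remark following \eqref{eq:major-estimate} explains, this cancellation is forced by the very choice of the coefficients $c_j$ in \eqref{eq:cj}.

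For the error term, I would first note that \eqref{eq:error-1} and \eqref{eq:error-2} together force $|\gcd(g_1,\dots,g_n)|\geq q^{D+1+j-e}|\g|$ whenever $\mathbf{1}_j(\g)=1$, whence the error is at most $q^{(n-1)(e-\rho)}\sum_{j\geq 0}|c_j|E_j$ with $E_j$ the quantity displayed before \eqref{eq:revisit}, counting $\g$ with a large common factor. Here I would invoke \cite[Lemma 2.8]{BV}, which gives $N(q,u,f)=O_f(q^{(u+1)(n-1)})$ for $n\geq 3$ with an implied constant independent of $q$, and then sum the two resulting geometric series --- the one in $\ell$ converges precisely because $n>d+1$, which we may assume --- to obtain $E_j\ll_f q^{(e-j)(n-d)+n-1-(D+1)(n-d-1)}$. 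Since $|c_j|\leq q+1$, $c_j=0$ for $j>2$, and $n-d\geq 1$, the $j=0$ term dominates the sum over $j$, so the total error is $\ll_f q^{(n-1)(e-\rho)+e(n-d)+n-1-(D+1)(n-d-1)}$.

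Finally I would verify the exponent identity $(n-1)(e-\rho)+e(n-d)+n-1=\bigl(e(n-d)-\rho(n-1)\bigr)+(e+1)(n-1)$, which permits factoring $q^{e(n-d)-\rho(n-1)}$ out of both the main and error terms and reading off the statement. There is no real obstacle here; the only points requiring care --- and hence the ``hard part'', such as it is --- are keeping the convergence of the geometric series honest (this is where $n>d+1$ enters) and confirming that no implied constant ever acquires a dependence on $q$. Everything else is the arithmetic of exponents.
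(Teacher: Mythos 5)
Your proposal is correct and follows essentially the same route as the paper: the same telescoping of $\tilde N(e)-q^d\tilde N(e-1)$ for the main term, the same bound on $E_j$ via \cite[Lemma 2.8]{BV} and the two geometric series, and the same exponent bookkeeping at the end. Nothing to add.
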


\section{The circle method:  minor arcs}\label{s:minor}

It remains to study the quantity
\begin{equation}
\label{eq:minor-j}
N_{\rho}^{\text{minor}}(q,e,f) 
=
\sum_{j\geq 0}c_j
\sum_{\substack{
\g\in \FF_q[T]^n\\  
 0<|\g|<q^{e-j+1}\\
 f(\g)=0}}
 \int_{\mathfrak{n}_j} S(\beta) \d \beta,
\end{equation}
where $\mathfrak{n}_j$ is the complement 
in $\TT=\{\beta\in \FF_q((T^{-1})): |\beta|<1\}$
of the major arcs $\mathfrak{N}_j$ that we defined in \eqref{eq:major-beta}.
Indeed, in view of Proposition \ref{unfree-bound}(2), 
the following result is now a direct consequence of \eqref{eq:full-integral} and Lemma \ref{lem:pebble}.

\begin{lemma}
\label{lem:savepoint}
Assume that $d\geq 3$ and 
$e\geq \rho$.
Then 
$$
\#Z_\rho(\FF_q)\leq 
\frac{q^{\rho (n-1)   - e(n-d)} }{(q-1)^2(q^3-q)}
N_{\rho}^{\text{minor}}(q,e,f) 
+O_f(q^{(e+1)(n-1)-5-(D+1)(n-d-1)}),
$$
where $D$ is given by \eqref{eq:D}.
\end{lemma}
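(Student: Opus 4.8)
The plan is a straightforward assembly: combine the exact major/minor arc decomposition of $N_\rho(q,e,f)$ furnished by \eqref{eq:full-integral} with the evaluation of the major arc part carried out in Lemma~\ref{lem:pebble}, and then substitute into the counting inequality of Proposition~\ref{unfree-bound}(2).

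First I would observe that, since the inner sum over $\g$ in \eqref{eq:full-integral} is empty unless $e\geq j$, the major arcs $\mathfrak{N}_j$ of \eqref{eq:major-beta} are contained in $\TT$ for every $j$ that contributes, so that the splitting $\int_\TT S(\beta)\,\d\beta=\int_{\mathfrak{N}_j}S(\beta)\,\d\beta+\int_{\mathfrak{n}_j}S(\beta)\,\d\beta$ is legitimate. Summing over $j$ against the coefficients $c_j$ of \eqref{eq:cj}, and recalling that $N_\rho^{\mathrm{major}}(q,e,f)$ is the contribution of \eqref{eq:major-estimate} while $N_\rho^{\mathrm{minor}}(q,e,f)$ is defined in \eqref{eq:minor-j}, this yields the identity $N_\rho(q,e,f)=N_\rho^{\mathrm{major}}(q,e,f)+N_\rho^{\mathrm{minor}}(q,e,f)$.

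Since $d\geq 3$ and $e\geq\rho$, Lemma~\ref{lem:pebble} applies and gives
\[
N_\rho(q,e,f)\,q^{\rho(n-1)-e(n-d)}-N(q,e,f)
=q^{\rho(n-1)-e(n-d)}N_\rho^{\mathrm{minor}}(q,e,f)+O_f\!\left(q^{(e+1)(n-1)-(D+1)(n-d-1)}\right),
\]
where the implied constant depends only on $f$, not on $q$ --- this uniformity is exactly what was tracked throughout \S\ref{diophantine-exponential}, the ultimate input being the $q$-free bound $N(q,e,f)\ll_f q^{(e+1)(n-1)}$ from \cite[Lemma~2.8]{BV}. Dividing by $(q-1)^2(q^3-q)$ and inserting into Proposition~\ref{unfree-bound}(2), the error term lands on the larger side of the inequality and so only needs an upper estimate; using the uniform bound $(q-1)^2(q^3-q)=q(q+1)(q-1)^3\gg q^5$ for $q\geq 2$, its contribution is $O_f\!\left(q^{(e+1)(n-1)-5-(D+1)(n-d-1)}\right)$, which is precisely the stated error, while the main term $\tfrac{q^{\rho(n-1)-e(n-d)}N_\rho^{\mathrm{minor}}(q,e,f)}{(q-1)^2(q^3-q)}$ is carried forward untouched (it is bounded only later, in \S\ref{s:minor}).

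I do not anticipate any genuine obstacle here: all of the substantive work is already contained in Lemmas~\ref{lem:1}--\ref{lem:pebble}, and the only point calling for attention is keeping every implied constant independent of $q$ throughout the division by $(q-1)^2(q^3-q)$, which is automatic from the $q$-uniformity already established.
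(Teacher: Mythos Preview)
Your proposal is correct and follows the same route as the paper, which simply remarks that the lemma is a direct consequence of Proposition~\ref{unfree-bound}(2), \eqref{eq:full-integral}, and Lemma~\ref{lem:pebble}. You have spelled out the assembly accurately, including the $q$-uniform division by $(q-1)^2(q^3-q)\gg q^5$ that produces the $-5$ in the exponent.
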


We have 
\begin{equation}\label{eq:task}
\sum_{\substack{
\g\in \FF_q[T]^n\\  
 0<|\g|<q^{e-j+1}\\
 f(\g)=0 }}
 \int_{\mathfrak{n}_j} S(\beta) \d \beta
 =
\int_{\TT} \int_{\mathfrak{n}_j} 
(S(\alpha,\beta) - q^{n (e-\rho)}  )
\d \alpha\d \beta,
\end{equation}
where
\begin{equation}\label{def:Sab}
S(\alpha,\beta)=
\sum_{\substack{
\g\in \FF_q[T]^n\\  
 |\g|<q^{e-j+1}}}\sum_{\substack{\h\in \FF_q[T]^n\\  |\h|< q^{e-\rho}}}
\psi(\alpha f(\g)+\beta\h.\nabla f(\g)).
\end{equation}
Viewed as polynomials in the $2n$ variables $(\g,\h)$ the pair of polynomials 
$f(\g)$ and $\h.\nabla f(\g)$ are homogeneous of degree $d$. 
The obvious thing to do at this point is to apply Weyl differencing $d-1$ times in the spirit of Birch. This requires one to work with a simultaneous Diophantine approximation of $\alpha$ and $\beta$, which is somewhat
wasteful. It  bears fruit provided that 
$$2n-\dim V^*>3(d-1)2^d, 
$$
where $V^*$ is the (affine)  ``Birch singular locus''. In this setting $V^*$ is the locus of $(\g,\h)\in \AA^{2n}$ such that the pair of vectors $(\nabla f(\g), \0)$ and 
$(\h.\nabla^2 f(\g), \nabla f(\g))$ are proportional.
 Since $f$ is non-singular, it follows that $V^*$ is the set of $(\g,\h)\in \AA^{2n}$ such that $\g=\0$, so that $\dim V^*=n$. In this way we see that the standard approach would require 
$n>3(d-1)2^{d}$  variables overall, although there are additional difficulties associated to having lopsided boxes. 
In our work we shall exploit the special shape of our polynomials in such a way that our estimates are only sensitive to the Diophantine approximation properties of $\alpha$ or $\beta$ independently. 
This allows us to handle half the number of variables when dealing with the sum $S(\alpha,\beta).$

In what follows it will be convenient to define the monomials
$$
P_0(T)=T^{e-j}, \quad 
P(T)=T^{e-j+1} \quad  \text{ and }  \quad Q(T)=T^{e-\rho}.
$$
Let
\begin{equation}\label{eq:MM-J}
\mathfrak{M}(J)=\bigcup_{\substack{r\in \FF_q[T] \text{ monic}\\ 
|r|\leq q^J
}}
\bigcup_{\substack{|a|<|r|\\ \gcd(a,r)=1}} \left\{\alpha\in \FF_q((T^{-1})): \left|r\alpha-a\right|<q^J|P_0|^{-d}\right\},
\end{equation}
for any integer $J$.
Note that $\mathfrak{M}(-1)=\emptyset$. Let 
\begin{equation}\label{eq:MM}
M=\left\lceil \frac{d(e-j)}{2} \right\rceil.
\end{equation}
According to the function field version 
of Dirichlet's approximation theorem 
any element of  $\TT$ has a representation 
$a/r+\theta$ with $|a|<|r|\leq q^{M}$ and $|r\theta|<q^{-M}$. Hence we can cover $\TT$ by a union of arcs 
$\mathfrak{M}(J+1)\setminus \mathfrak{M}(J)$ for integers $J$ such that 
$-1\leq J\leq M-1$.

Next, let
\begin{equation}\label{eq:NN-K}
\mathfrak{N}(K)=\bigcup_{\substack{r\in \FF_q[T] \text{ monic}\\ 
|r|\leq q^K
}}
\bigcup_{\substack{|a|<|r|\\ \gcd(a,r)=1}} \left\{\beta\in \FF_q((T^{-1})): \left|r\beta-a\right|<\frac{q^K}{|P_0|^{d-1}|Q|}\right\},
\end{equation}
for any  integer  $K$. 
We note that $\mathfrak{N}(e-\rho)=\mathfrak{N}_j$, in the notation of \eqref{eq:major-beta}.
Let
\begin{equation}\label{eq:NN}
N=\left\lceil \frac{(e-j)(d-1)+e-\rho}{2} \right\rceil.
\end{equation}
It now follows from Dirichlet's approximation theorem that
the minor arcs 
$\mathfrak{n}_j$ can be covered by 
the  union of arcs 
$\mathfrak{N}(K+1)\setminus \mathfrak{N}(K)$ for integers $K$ such that 
$e-\rho\leq K\leq N-1$. 

Observe in particular that if any minor arcs exist then $e-\rho < N$ so \begin{equation}\label{eq:minor-arcs-exist}(d-1) (e-j) >  e-\rho.\end{equation} We may thus assume \eqref{eq:minor-arcs-exist} when dealing with the minor arcs. Keeping the assumptions $d \geq 3$ and $e \geq \rho$, we see in particular that \[|P|, |Q| \geq 1.\]

Our plan is  to produce two estimates for $S(\alpha,\beta)$: one for  when 
$\alpha$ belongs to $ \mathfrak{M}(J+1)\setminus \mathfrak{M}(J)$  and one for when
$\beta$ belongs to $ \mathfrak{N}(K+1)\setminus \mathfrak{N}(K)$.
Before proceeding further we note that 
\begin{equation}\label{eq:measure}
\meas\left( \mathfrak{M}(J)\right) \leq q^{2J}|P_0|^{-d}
\end{equation}
and 
\begin{equation}\label{eq:measure-N}
\meas\left( \mathfrak{N}(K)\right) \leq 
q^{2K}|P_0|^{-d+1}|Q|^{-1},
\end{equation}
for any integers $J,K\geq 0$. 

Suppose that 
$$
f(\x)
=\sum_{i_1,\dots,i_d=1}^n c_{i_1,\dots,i_d} 
x_{i_1}\dots x_{i_d},
$$
with symmetric coefficients $c_{i_1,\dots,i_d} \in \FF_q$.
Associated to $f$ are the multilinear forms
\begin{equation}\label{eq:multi}
\Psi_i(\x^{(1)},\dots,\x^{({d-1})})
=
d! 
\sum_{i_1,\dots,i_{d-1}=1}^n c_{i_1,\dots,i_{d-1},i} 
x_{i_1}^{(1)}\dots x_{i_{d-1}}^{(d-1)},
\end{equation}
for $1\leq i\leq n$.
Our first estimate for $S(\alpha,\beta)$ involves 
summing trivially over $\h$ and then applying  
Weyl differencing
$d-1$ times to the  sum over $\g$.  This eliminates the effect of the lower degree term $\beta \h.\nabla f(\g)$ and leads one to a family of linear exponential sums 
with phase vectors 
$(\alpha\Psi_1(\underline{\g}), \dots,\alpha\Psi_n(\underline\g))$, for 
 $\underline\g=(\g_1,\dots,\g_{d-1})\in \FF_q[T]^{(d-1)n}$.
This approach closely parallels 
 \cite{BV'}. 
 
 An alternative estimate for $S(\alpha,\beta)$ is obtained by applying Weyl differencing $d-2$ times to the sum over $\g$. After a further application of Cauchy--Schwarz one then brings the $\h$-sum inside, giving a  family of linear exponential sums with 
phase vectors $(\beta\Psi_1(\underline{\g}), \dots,\beta\Psi_n(\underline\g))$, for 
 $\underline\g\in \FF_q[T]^{(d-1)n}$. This brings the Diophantine properties of $\beta$ into play but extra difficulties arise from the fact that $P$ and $Q$ need not have the same degree.

\subsection{Geometry-of-numbers redux}

We shall need to begin by revisiting  a function field lattice point counting result that played a key role in \cite{BV'}.  
A
 lattice in $\FF_q((T^{-1}))^N$ is a set of points of the form ${\bf x} = \Lambda \bf u$ where $\Lambda$ is an $N \times N$ invertible matrix over $\FF_q((T^{-1}))$ and $ {\bf u}$ runs over elements of $\FF_q[T]^n$. Given a lattice $\Lambda$, the adjoint lattice is defined as the lattice associated to the inverse transpose matrix $\Lambda^{-T}$.
 
 \begin{remark} We can view lattices as vector bundles on $\mathbb P^1$ by viewing the matrix $\Lambda$ as giving gluing data for gluing the trivial vector bundle on $\mathbb A^1$ and the trivial vector bundle on a formal neighborhood of $\infty$, using the Beauville--Laszlo theorem. The adjoint lattice corresponds to the dual vector bundle, and the geometry-of-numbers computations in this section  could instead be stated  in this language. 
\end{remark}

Bearing our notation in mind we recall a
version of the ``shrinking lemma'' that is proved in \cite[Lemma 6.4]{sawin}.

\begin{lemma}\label{new-geometry} 
Let $\gamma$ be a symmetric $n \times n$ matrix with entries in $\FF_q((T^{-1}))$. 
Let $a, c,s\in \mathbb{Z}$ such that $c >0$ and $s \geq 0$.
Let $N_{\gamma,a,c}$ be the number of ${\bf x} \in \FF_q[T]^n$ such that $| {\bf x} | < q^a$ and $\| \gamma {\bf x} \| < q^{-c}$.  Then
\[ 
\frac{N_{\gamma,a,c}}{N_{\gamma,a-s,c+s}} \leq q^{ns + n  \max( \lfloor \frac{a-c}{2} \rfloor, 0 )}. \] \end{lemma}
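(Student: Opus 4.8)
The plan is to prove this by a geometry-of-numbers argument, comparing the "successive minima" structure of the relevant lattices attached to $\gamma$ at the two scales $(a,c)$ and $(a-s,c+s)$. First I would recast the counting problem: the condition $|\mathbf{x}| < q^a$ together with $\|\gamma\mathbf{x}\| < q^{-c}$ describes the lattice points of a certain lattice $\Lambda_{\gamma,a,c}$ in $\FF_q((T^{-1}))^{2n}$ (obtained by stacking $\mathbf{x}$ with $\gamma\mathbf{x}$ and rescaling the two blocks by $q^{-a}$ and $q^{c}$ respectively) that lie in the unit polydisc; equivalently, $N_{\gamma,a,c}$ counts global sections of the associated rank-$n$ vector bundle on $\mathbb{P}^1$, of which there are $q^{(\text{sum of nonnegative parts of the slopes})}$. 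So the ratio $N_{\gamma,a,c}/N_{\gamma,a-s,c+s}$ is governed purely by how the $n$ "jumps" (degrees $d_1 \geq \cdots \geq d_n$ of the Harder--Narasimhan summands, or equivalently the orders of the successive minima of the adjoint lattice) shift as we pass from one scale to the other.

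Next I would track this shift explicitly. Going from $(a,c)$ to $(a-s,c+s)$ corresponds to shrinking the box for $\mathbf{x}$ by $q^{-s}$ and shrinking the box for $\gamma\mathbf{x}$ by $q^{-s}$ as well — so every relevant degree drops by at least... here is where one must be careful: each summand degree changes by at most $2s$ in the worst case (both constraints tightening), but the count $N$ depends only on $\sum_i \max(d_i,0)$, and this sum decreases by at most $ns$ coming from the $|\mathbf{x}|$-side tightening plus a further bounded amount. This is essentially the content of the shrinking lemma \cite[Lemma 4.2]{BV'}, and the refinement I want is to keep the extra term $n\max(\lfloor\frac{a-c}{2}\rfloor,0)$ rather than absorb it into a constant. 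That extra term arises because the symmetry of $\gamma$ forces the slopes to come in a symmetric configuration: if $\mathbf{x}$ achieves $\|\gamma\mathbf{x}\|<q^{-c}$ with $|\mathbf{x}|$ small, then by symmetry of $\gamma$ there is a dual constraint, and the number of summands with slope in the "dangerous middle range" $(0, a-c)$ is bounded, each contributing at most $\lfloor\frac{a-c}{2}\rfloor$ to the discrepancy. Concretely I would argue: write the degrees $d_i$ at scale $(a,c)$; at scale $(a-s,c+s)$ the corresponding degrees are $d_i'$ with $d_i - 2s \leq d_i' \leq d_i$; then
\[
\sum_i \max(d_i,0) - \sum_i \max(d_i',0) \leq \sum_i \bigl(\max(d_i,0) - \max(d_i - 2s, 0)\bigr) \leq \#\{i : d_i > 0\}\cdot s + (\text{correction}),
\]
wait — one must do this more carefully using that the shift on the $\mathbf{x}$-side alone is $s$, not $2s$, on $n$ coordinates, and the shift on the $\gamma\mathbf{x}$-side contributes only through summands straddling zero, of which there are at most $n$, each contributing at most $\lfloor\frac{a-c}{2}\rfloor$. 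I would organize this as a clean inequality on the sum of truncated slopes.

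The main obstacle I expect is precisely this bookkeeping of the symmetric slope configuration: proving that the excess over the naive bound $q^{ns}$ is controlled by $q^{n\max(\lfloor\frac{(a-c)}{2}\rfloor,0)}$ and not something larger. This requires genuinely using the symmetry of $\gamma$ (a non-symmetric $\gamma$ would give a weaker bound), most likely via the self-duality relating the lattice of $\mathbf{x}$ with $\|\gamma\mathbf{x}\|$ small to its adjoint, so that the successive minima pair up $\mu_i \cdot \mu_{n+1-i} \approx q^{a-c}$. From that pairing, a summand contributes to the "middle range" for at most one of $i$ and $n+1-i$, and each such contributes at most $\lfloor\frac{a-c}{2}\rfloor$, giving the stated factor. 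The remainder of the argument — translating between lattice point counts and sums of truncated slopes, and the elementary inequality $\max(d,0)-\max(d-2s,0) \le s + \text{(straddle correction)}$ — is routine and I would not belabor it. I would also note the edge cases $a - c < 0$ (where the middle range is empty and the clean bound $q^{ns}$ holds) and $c > 0$, $s \geq 0$ (needed so that shrinking is well-defined and the adjoint lattice argument applies), both of which are hypotheses in the statement.
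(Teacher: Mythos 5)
Your proposal is correct and follows essentially the same route as the paper: form the $2n\times 2n$ lattice $\Lambda_{a,c}$, use the symmetry of $\gamma$ to show it is self-dual up to the scalar $t^{c-a}$ and a change of basis over $\FF_q$, deduce the pairing $\hat{R}_i\hat{R}_{2n+1-i}=q^{c-a}$ of successive minima (hence $\hat{R}_{n+1}\geq q^{\lceil (c-a)/2\rceil}$), and compare point counts via the product formula $N=\prod_i\max(1,\hat{R}_i^{-1})$ from Lee's thesis. The bookkeeping you worry about resolves itself, since $\Lambda_{a-s,c+s}=t^s\Lambda_{a,c}$ rescales all $2n$ minima by exactly $q^s$ (not "at most $2s$" per degree), so the term-by-term ratio is at most $q^s$ for every index and at most $q^{\max(\lfloor (a-c)/2\rfloor,0)}$ for the top $n$ indices, which is precisely the stated bound.
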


For any 
$\alpha\in \FF_q((T^{-1}))$ and any $r> 0$, we set
\begin{equation}\label{eq:Nar}
N(\alpha;r)=
\#\left\{ \underline{\g}\in \FF_q[T]^{(d-1)n}: 
\begin{array}{l}
|\g_1|,\dots,|\g_{d-1}|<|P|\\
\|\alpha \Psi_i(\underline{\g}) \|<q^{-r} ~(\forall   i\leq n)
\end{array}{}
\right\}.
\end{equation}
Furthermore, for an integer $s \geq 0$, we put
$$
N_{s}(\alpha;r)=
\#\left\{ \underline{\g}\in \FF_q[T]^{(d-1)n}: 
\begin{array}{l}
|\g_1|,\dots,|\g_{d-1}|<|P|/q^{s}\\
\|\alpha \Psi_i(\underline{\g}) \|<q^{-r-(d-1) s}  ~(\forall   i\leq n)
\end{array}{}
\right\}.$$
We can use the shrinking lemma to bound the ratio of these two quantities as follows.

\begin{lemma}\label{lem:shrink}

  For $r>0$ and 
 ${s}\geq \max(0, e-j+1-r)$, 
we have 
\[\frac{N(\alpha,r)}{N_{s}(\alpha,r)} \leq 
q^{ (d-1) n s + n \max(0,\lfloor \frac{ e-j+1-r}{2}\rfloor )} .
\]  

\end{lemma}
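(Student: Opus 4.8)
The plan is to obtain Lemma~\ref{lem:shrink} by iterating the shrinking lemma (Lemma~\ref{new-geometry}) once for each of the $d-1$ blocks $\g_1,\dots,\g_{d-1}$, peeling them off and shrinking them one at a time. The point that makes this work is multilinearity: for a fixed index $k$ and fixed values of the blocks $\g_\ell$ with $\ell\neq k$, the map $\g_k\mapsto(\Psi_1(\underline{\g}),\dots,\Psi_n(\underline{\g}))$ is linear, given by $\g_k\mapsto M_k\g_k$ for an $n\times n$ matrix $M_k$ with entries in $\FF_q[T]$ (the obvious multilinear expressions in the frozen blocks, read off from \eqref{eq:multi}). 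Crucially $M_k$ is \emph{symmetric}: this follows at once from the full symmetry of the coefficients $c_{i_1,\dots,i_d}$ of $f$ upon swapping the entries in slots $k$ and $d$. Hence $\alpha M_k$ is a symmetric matrix over $\FF_q((T^{-1}))$, which is precisely the kind of input Lemma~\ref{new-geometry} accepts.

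Next I would set up the induction. For $0\leq k\leq d-1$, let $A_k$ count the $\underline{\g}\in\FF_q[T]^{(d-1)n}$ with $|\g_\ell|<|P|/q^s$ for $\ell\leq k$, with $|\g_\ell|<|P|$ for $\ell>k$, and with $\|\alpha\Psi_i(\underline{\g})\|<q^{-r-ks}$ for all $i\leq n$. Since $|P|=q^{e-j+1}$, one has $A_0=N(\alpha,r)$ and $A_{d-1}=N_s(\alpha,r)$. To pass from $A_{k-1}$ to $A_k$, freeze the blocks $\g_\ell$ with $\ell\neq k$ throughout their prescribed ranges and apply Lemma~\ref{new-geometry} to the $\g_k$-sum with $\gamma=\alpha M_k$, with $a=e-j+1$, with $c=r+(k-1)s$ (which is positive because $r>0$ and $s\geq 0$), and with the given shrinking parameter $s$. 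This bounds the ratio of the two $\g_k$-counts by $q^{ns+n\max(\lfloor\frac{e-j+1-r-(k-1)s}{2}\rfloor,0)}$, a quantity independent of the frozen blocks; summing over those blocks therefore gives $A_{k-1}\leq q^{ns+n\max(\lfloor\frac{e-j+1-r-(k-1)s}{2}\rfloor,0)}\,A_k$.

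Multiplying the $d-1$ inequalities and telescoping yields
\[
\frac{N(\alpha,r)}{N_s(\alpha,r)}\leq q^{(d-1)ns}\prod_{k=1}^{d-1}q^{n\max(\lfloor\frac{e-j+1-r-(k-1)s}{2}\rfloor,0)}.
\]
The final step is the observation that the hypothesis $s\geq\max(0,e-j+1-r)$ forces $e-j+1-r-(k-1)s\leq 0$ for every $k\geq 2$ (using $s\geq 0$ to handle $k\geq 3$), so all of those factors equal $1$ and only the $k=1$ term $q^{n\max(\lfloor\frac{e-j+1-r}{2}\rfloor,0)}$ survives; this is exactly the asserted bound.

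I do not foresee a genuine obstacle: the argument is a mechanical iteration of Lemma~\ref{new-geometry}. The two places that want a little care are (i) verifying that each $M_k$ is symmetric, so that the shrinking lemma is actually applicable at every stage, and (ii) the exponent bookkeeping — tracking that the approximation exponent grows from $r$ to $r+(d-1)s$ across the iteration and that the stated condition on $s$ is precisely what kills the $\max$-contributions from steps $2,\dots,d-1$. Beyond that, the only mild nuisance is notational: choosing the interpolating quantities $A_k$ so that $A_0$ and $A_{d-1}$ land exactly on $N(\alpha,r)$ and $N_s(\alpha,r)$.
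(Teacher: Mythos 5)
Your proposal is correct and follows essentially the same route as the paper: the same interpolating counts (your $A_k$ are the paper's $N^{(v)}$), the same observation that freezing all but one block yields a symmetric matrix so that Lemma~\ref{new-geometry} applies with $a=e-j+1$ and $c=r+(k-1)s$, and the same telescoping with the hypothesis on $s$ killing all but the $k=1$ max-term.
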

\begin{proof}
For each $v\in \{0,\dots,d-1\}$, 
let  $N^{(v)}(\alpha,r)$  be the number of vectors $\underline{\g}\in \FF_q[T]^{(d-1)n}$
such that 
\begin{equation}\label{eq:lapel}
|\g_1|,\dots,|\g_{v}|<|P|/q^{s}, \quad 
|\g_{v+1}|,\dots,|\g_{d-1}|<|P| 
\end{equation}
and 
$
\|\alpha \Psi_i(\underline{\g}) \|<q^{-r- v },
$
for $1\leq i\leq n$.
Thus we have $N^{(0)}(\alpha,r)=N(\alpha,r)$ and $N^{(d-1)}(\alpha,r)=N_{s}(\alpha,r)$. 

Fix a choice of $v\in \{1,\dots,d-1\}$ and let  $\g_1,\dots,\g_{v-1},\g_{v+1},\dots,\g_{d-1}\in \FF_q[T]^n$ such that \eqref{eq:lapel} holds.
 We consider the linear forms
\begin{align*}
L_i(\g)
&=\alpha \Psi_i(\g_1,\dots,\g_{v-1},\g,\g_{v+1},\dots,\g_{d-1}),
\end{align*}
for $1\leq i\leq n$.
These form an $n \times n$ matrix. Because $\Psi_i$ is the dualization in one variable of a symmetric $d$-linear form, this $n\times n$ matrix is symmetric. The contribution to $N^{(v-1)}(\alpha,r)$ from tuples with the chosen $\g_1,\dots,\g_{v-1},\g_{v+1},\dots,\g_{d-1}\in \FF_q[T]^n$  is  $N_{\gamma, e-j+1, r + (v-1) s }$ while the contribution to 
$N^{(v)}(\alpha,r)$
from tuples of the same form is $N_{\gamma, e-j+1-s , r + v s }$. Note that $r + (v-1) s\geq r>0$ for $v\geq 1$ and so  Lemma~\ref{new-geometry} is applicable. We 
deduce that
\[ 
\frac{ N^{(v-1)}(\alpha,r)}{ N^{(v)}(\alpha,r)}  \leq   q^{ns + n   \max( \lfloor \frac{e - j+1-r  - (v-1) s }{2} \rfloor, 0 )}
\]
for $1\leq v\leq d-1$.

We take the product of this inequality over all $v$ from $1$ to $d-1$. The first term 
in the exponent 
contributes $(d-1) ns$. The second contributes $n\max( \lfloor \frac{ e -j+1-r }{2} \rfloor, 0)$ for $v=1$ and $0$ for all other values of $v$, on assuming  that $s\geq e-j+1-r$. Thus we get the  stated bound. \end{proof}

\subsection{Weyl differencing}

Our fundamental tool for estimating 
$S(\alpha,\beta)$ is Weyl differencing.  
We recall first that $|P|,|Q|\geq 1$ in this exponential sum. 
Appealing to \cite[Eq.~(5.2)]{BV'} first, Weyl differencing $d-1$ times gives
$$
|S(\alpha,\beta)|\leq 
|P|^n|Q|^n\left(
|P|^{-(d-1)n}N(\alpha, e-j+1)\right)^{1/2^{d-1}},
$$
in the notation of \eqref{eq:Nar}.  Note that as $N(\alpha,e-j+1) \geq 1$ and $2^{d-1} \geq (d-1) $, the right side is $\geq |Q|^n$. 
Thus we have
\begin{equation}\label{eq:weyl1'}
|S(\alpha,\beta) - q^{n(e-\rho)}  |\leq 2
|P|^n|Q|^n\left(
|P|^{-(d-1)n}N(\alpha, e-j+1)\right)^{1/2^{d-1}},
\end{equation}

We can also obtain an upper bound for $S(\alpha,\beta)$ that only uses information about 
$\beta$.
Let us put 
$$
T(\h)=\sum_{|\g|<|P|} 
\psi(\alpha f(\g)+\beta\h.\nabla f(\g)),
$$
so that 
$$
S(\alpha,\beta)=
\sum_{ |\h|< |Q|} T(\h),
$$
with $P,Q$ are as before.
It follows from Cauchy--Schwarz that
\begin{equation}\label{eq:S-cauchy}
|S(\alpha,\beta)|^{2^{d-2}}\leq 
|Q|^{(2^{d-2}-1)n}
\sum_{ |\h|< |Q|} |T(\h)|^{2^{d-2}}.
\end{equation}
After $d-3$ applications of Weyl differencing we obtain
$$
|T(\h)|^{2^{d-3}} \leq |P|^{(2^{d-3}-d+2)n} \sum_{\g_1,\dots,\g_{d-3}}\left|
\sum_{\g} \psi\left(D(\g)\right)\right|,
$$
where $D(\g)=D_{\g_1,\dots,\g_{d-3}}(\alpha f(\g)+\beta\h.\nabla f(\g))$ and 
$D_{\g_1,\dots,\g_{d-3}}$ is the usual differencing operator. Here $\g_1,\dots,\g_{d-3},\g$ each run over vectors in $\FF_q[T]^n$ formed from polynomials of degree less than $e-j+1$.
A further application of Cauchy--Schwarz now yields
$$
|T(\h)|^{2^{d-2}} \leq |P|^{(2^{d-2}-d+1)n} \sum_{\g_1,\dots,\g_{d-3}}\left|
\sum_{\g} \psi\left(D(\g)\right)\right|^2.
$$
Differencing once more therefore  leads to the expression
$$
\left|\sum_{\g} \psi\left(D(\g)\right)\right|^2=
\sum_{\g_{d-2},\g_{d-1}} \psi\left(D_{\g_1,\dots,\g_{d-2}}(\alpha f(\g_{d-1})+\beta\h.\nabla f(\g_{d-1})\right), 
$$
where
$$
D_{\g_1,\dots,\g_{d-2}}(\h.\nabla f(\g_{d-1})=\sum_{i=1}^n  h_i\Psi_i(\g_1,\dots,\g_{d-1}),
$$
in the notation of \eqref{eq:multi}.  
Returning to \eqref{eq:S-cauchy} 
we ignore the Diophantine approximation properties of $\alpha$ and instead execute the linear exponential  sum over $\h$. This leads to the expression
$$ 
|S(\alpha,\beta)|\leq |P|^n|Q|^n\left(|P|^{-(d-1)n}
 N(\beta,e-\rho)\right)^{1/2^{d-2}},
$$
in the notation of \eqref{eq:Nar}. Again, $N(\beta,e-\rho) \geq 1$ and $2^{d-2} \geq (d-1)$ so the right side is $\geq |Q|^n$, whence
\begin{equation}\label{eq:weyl2'}
|S(\alpha,\beta) - q^{n (e-\rho)} |\leq 
2 
|P|^n|Q|^n\left(|P|^{-(d-1)n}
 N(\beta,e-\rho)\right)^{1/2^{d-2}}.
\end{equation}

\begin{remark} \label{:P}
When $\mathrm{char}(\FF_q)\leq d$ the polynomials $\Psi_i$  are identically zero for $1\leq i\leq n$, so that 
\eqref{eq:weyl1'} and 
\eqref{eq:weyl2'} give  nothing beyond the trivial bound for the exponential sum $S(\alpha,\beta)$.
\end{remark}

Recall the definitions \eqref{eq:MM-J} and 
\eqref{eq:NN-K} of 
$\mathfrak{M}(J)$ and 
$\mathfrak{N}(K)$, respectively. 
We want to bound  the size of $S(\alpha,\beta)$ when $\alpha\not\in \mathfrak{M}(J)$
and  $\beta\not\in \mathfrak{N}(K)$. To do this it will be convenient to  introduce two parameters 
$s_1$ and $s_2$.
Associated to these are the quantities 
$$
l_1=e-j+1-s_1 \quad \text{ and } \quad
l_2=e-j+1-s_2.
$$
We can use our geometry-of-numbers shrinking result to  establishing the following pair of estimates.

\begin{lemma}\label{lem:N1} 
Let $\alpha\not\in \mathfrak{M}(J)$
and let 
$l_1\in \ZZ$ be such that 
$$
l_1\leq 1+\frac{J}{d-1} \quad \text{ and }
\quad 
l_1\leq e-j+1.
$$
Then 
there exists a constant $c_{d,n}>0$ such that. 
$$
N(\alpha, e-j+1)\leq 
c_{d,n} 
q^{-nl_1}|P|^{(d-1)n}.
$$
\end{lemma}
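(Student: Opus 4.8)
The plan is to prove this bound by a standard Weyl-inequality / geometry-of-numbers argument, using Lemma~\ref{lem:shrink} to do the ``shrinking'' and the hypothesis $\alpha\not\in\mathfrak{M}(J)$ to force a contradiction if too many tuples $\underline{\g}$ survive the approximation condition. First I would set $s=s_1:=e-j+1-l_1\geq 0$ (legitimate since $l_1\leq e-j+1$), so that the shrunken quantity $N_{s_1}(\alpha,e-j+1)$ counts tuples $\underline{\g}$ with $|\g_v|<|P|/q^{s_1}=q^{l_1-1}$ (i.e.\ $\deg\g_v<l_1$) and $\|\alpha\Psi_i(\underline{\g})\|<q^{-(e-j+1)-(d-1)s_1}=q^{-(d-1)l_1+(d-2)(e-j+1)-\ldots}$; I would track the exact exponent, but the point is that on this shrunken box the condition $\|\alpha\Psi_i(\underline{\g})\|$ is extremely small. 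Then Lemma~\ref{lem:shrink} (applied with $r=e-j+1$, which gives $\lfloor\frac{e-j+1-r}{2}\rfloor=0$ and the hypothesis $s_1\geq\max(0,e-j+1-r)=0$ is automatic) yields
\[
N(\alpha,e-j+1)\leq q^{(d-1)n s_1}\,N_{s_1}(\alpha,e-j+1)=q^{n(d-1)(e-j+1-l_1)}\,N_{s_1}(\alpha,e-j+1).
\]
So everything reduces to showing $N_{s_1}(\alpha,e-j+1)\leq c_{d,n}\,q^{-nl_1}|P|^{(d-1)n}\cdot q^{-n(d-1)(e-j+1-l_1)}=c_{d,n}\,q^{n(d-1)l_1-nl_1}\cdot\text{(powers)}$; after simplification this is a bound of the shape $N_{s_1}(\alpha,e-j+1)\leq c_{d,n}\,q^{n(d-2)l_1}$, i.e.\ it should be $O(1)$ times $(q^{l_1})^{(d-2)n}$, the ``expected'' count if the multilinear conditions were all independent on the box $\deg\g_v<l_1$.

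The heart of the matter, then, is bounding $N_{s_1}(\alpha,e-j+1)$. Here I would argue by contradiction: if this count is much larger than $q^{(d-2)nl_1}$ (the trivial count for $\g_1,\dots,\g_{d-2}$ ranging freely over degree-$<l_1$ polynomials, with $\g_{d-1}$ then forced), then by pigeonhole there are ``many'' tuples, and one runs a Birch-type descent — or more simply, one fixes $\g_1,\dots,\g_{d-2}$ and views $\underline{\g}\mapsto(\Psi_i(\underline{\g}))_{i}$ as a linear map in $\g_{d-1}$ whose matrix is nonsingular over $\FF_q((T^{-1}))$ because $f$ is nonsingular; then one feeds the many solutions into an iterated differencing/linear-algebra argument to produce a genuinely good rational approximation $a/r$ to $\alpha$ with $|r|\leq q^J$ and $|r\alpha-a|<q^J|P_0|^{-d}$, contradicting $\alpha\notin\mathfrak{M}(J)$. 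The key inequalities that make this work are the two hypotheses on $l_1$: the bound $l_1\leq 1+\frac{J}{d-1}$ is exactly what guarantees that the modulus $r$ produced has $|r|\leq q^J$, and $l_1\leq e-j+1$ keeps $s_1\geq0$ so the shrinking lemma applies. This is the same mechanism as in \cite[\S5]{BV'} and I would cite or adapt their argument rather than reprove it from scratch.

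The main obstacle I anticipate is getting the exponent bookkeeping exactly right — in particular matching the small-denominator condition $\|\alpha\Psi_i\|<q^{-(e-j+1)}$, the shrinkage $s_1$, and the defining inequality $|r\alpha-a|<q^J|P_0|^{-d}$ of $\mathfrak{M}(J)$ (recall $|P_0|=q^{e-j}$, so $q^J|P_0|^{-d}=q^{J-d(e-j)}$), and then verifying that the contradiction is reached precisely under $l_1\le 1+\frac J{d-1}$. The combinatorial/linear-algebra descent itself is routine given nonsingularity of $f$ (so that the $\Psi_i$ cut out only $\underline{\g}$ with some $\g_v$ of small degree, mirroring the computation $V^*=\{\g=\0\}$ from the excerpt), but one must be careful that the differencing variables $\g_1,\dots,\g_{d-3}$ and the Cauchy–Schwarz steps are arranged so that the resulting approximation is to $\alpha$ alone and not a simultaneous approximation to $(\alpha,\beta)$ — this is the whole ``50\% savings'' point of the paper, and it is already built into the fact that \eqref{eq:weyl1'} only involves $N(\alpha,\cdot)$. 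The constant $c_{d,n}$ absorbs the number of choices of $\g_1,\dots,\g_{d-2}$ in degenerate strata and depends only on $d,n$ (not on $q$, $e$, $j$, $\rho$), which is exactly the uniformity the later sections need.
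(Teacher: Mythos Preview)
Your setup is exactly the paper's: apply Lemma~\ref{lem:shrink} with $r=e-j+1$ and $s_1=e-j+1-l_1$ (so the extra $\max(0,\lfloor\cdot\rfloor)$ term vanishes), reducing the problem to
\[
N_{s_1}(\alpha,e-j+1)\leq c_{d,n}\,q^{(d-2)nl_1}.
\]
Minor slip: $|P|/q^{s_1}=q^{l_1}$, not $q^{l_1-1}$, so the shrunken box is $|\g_v|<q^{l_1}$, i.e.\ $\deg\g_v\leq l_1-1$.

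Where you diverge from the paper is in how you propose to bound $N_{s_1}$. There is no need for a pigeonhole argument on the size of the count, a Birch-type descent, or any ``iterated differencing/linear-algebra'' to manufacture an approximation. The paper's step here is a single direct observation: if some $\underline{\g}$ in the shrunken box has $\Psi_i(\underline{\g})\neq 0$ for some $i$, then set $r'=\Psi_i(\underline{\g})$ and let $a'$ be the polynomial part of $\alpha r'$; after dividing out common factors one obtains coprime $a,r$ with
\[
|r|\leq q^{(d-1)(l_1-1)}\quad\text{and}\quad |r\alpha-a|\leq\|\alpha\Psi_i(\underline{\g})\|<|P|^{-1}q^{-(d-1)s_1}=q^{(d-1)(l_1-1)-1}|P_0|^{-d}.
\]
The hypothesis $l_1\leq 1+\frac{J}{d-1}$ is exactly $(d-1)(l_1-1)\leq J$, so this forces $\alpha\in\mathfrak{M}(J)$, a contradiction. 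Hence \emph{every} tuple counted by $N_{s_1}$ satisfies $\Psi_i(\underline{\g})=0$ for all $i$. Since $f$ is non-singular, the affine variety $\{\Psi_i=0:1\leq i\leq n\}\subset\AA^{(d-1)n}$ has dimension $(d-2)n$, giving the bound $N_{s_1}\leq c_{d,n}q^{(d-2)nl_1}$ by pure dimension-counting.

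So your outline is correct, but the heart of the argument is one line, not a descent. Your remark that the linear map $\g_{d-1}\mapsto(\Psi_i(\underline{\g}))_i$ is ``nonsingular because $f$ is nonsingular'' is also not quite right: it fails for degenerate $\g_1,\dots,\g_{d-2}$, and in any case that nonsingularity is not what is used.
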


\begin{proof}
It follows from Lemma \ref{lem:shrink} that 
$N(\alpha,e-j+1)$ is at most 
$$
q^{(d-1)ns_1}
\#\left\{ \underline{\g}\in \FF_q[T]^{(d-1)n}: 
\begin{array}{l}
|\g_1|,\dots,|\g_{d-1}|<|P|/q^{s_1}\\
\|\alpha \Psi_i(\underline{\g}) \|<|P|^{-1}q^{-(d-1) s_1}  ~(\forall   i\leq n)
\end{array}{}
\right\},
$$
for any $s_1\geq 0$.
Note that  $|P| /q ^{s_1} = q^{l_1}$ and $q^{s_1} = |P_0| / q^{l_1 -1} $. 
Suppose that \[|\g_1|,\dots,|\g_{d-1}|<|P|/q^{s_1}\]
and 
$\|\alpha \Psi_i(\underline{\g}) \|<|P|^{-1}q^{-(d-1) s_1}$  
but $\Psi_i(\underline{\g})\neq 0$.   Let $r = \Psi_i(\underline{\g}) $ and let $a$ be the integer part of $\alpha \Psi_i(\underline{\g})$, each divided through by any common factors that they might share. 
Then $|r| \leq q^{(d-1) (l_1-1)}$ and $$
| r \alpha -a | <  |P|^{-1}  q^{ - (d-1) s_1}  = q^{  (d-1) (l_1-1)-1}   |P|_0^{-d} .
$$ 
This contradicts the assumption that   $\alpha \not \in \mathfrak{M}(J)$, 
if $J \geq  (d-1)(  l_1-1)$.
Hence,  if $ J \geq (d-1) (l_1-1)$ and $\alpha\not \in  \mathfrak M(J)$, 
we have 
$$
N(\alpha,e-j+1)\leq q^{(d-1)ns_1}
\#\left\{ \underline{\g}\in \FF_q[T]^{(d-1)n}: 
\begin{array}{l}
|\g_1|,\dots,|\g_{d-1}|<q^{l_1}\\
 \Psi_i(\underline{\g})=0~(\forall   i\leq n)
\end{array}{}
\right\}.
$$
The statement of the lemma follows on noting that 
the remaining cardinality is  $O(   q^{ (d-2) n  l_1})$ for dimensionality reasons,
where the implied constant depends only on $d$ and $n$. 
\end{proof}

\begin{lemma}\label{lem:N2}
Let $\beta\not\in \mathfrak{N}(K)$
and let  $l_2\in \ZZ$  be such that 
$$
l_2\leq 1+\frac{K}{d-1}\quad
\text{ and } \quad
l_2\leq e-j+1-\max(0,\rho-j+1).
$$
Then 
there exists a constant $c_{d,n}>0$ such that. 
$$
N(\beta, e-\rho)\leq 
c_{d,n}
q^{-nl_2 +
n\max(0,\lfloor \frac{\rho-j+1}{2}\rfloor)}
|P|^{(d-1)n}.
$$
\end{lemma}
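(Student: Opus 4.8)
The plan is to mimic the proof of Lemma \ref{lem:N1} essentially verbatim; the only structural novelty is that the relevant parameter in the shrinking estimate of Lemma \ref{lem:shrink} is now $e-\rho$ rather than $e-j+1$, so the term $n\max(0,\lfloor\tfrac{(e-j+1)-r}{2}\rfloor)$ no longer vanishes, becomes $n\max(0,\lfloor\tfrac{\rho-j+1}{2}\rfloor)$, and is carried all the way to the final bound. First, if $e=\rho$ then $N(\beta,e-\rho)$ counts all $\underline{\g}$ with $|\g_1|,\dots,|\g_{d-1}|<|P|$ (the condition $\|\cdot\|<1$ being vacuous), so $N(\beta,e-\rho)=|P|^{(d-1)n}$, and the claimed estimate is immediate since in this case the two displayed hypotheses force $l_2\le\max(0,\lfloor\tfrac{\rho-j+1}{2}\rfloor)$. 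So I may assume $e>\rho$, set $s_2=e-j+1-l_2$, and observe that the second displayed hypothesis is exactly the inequality $s_2\ge\max(0,\rho-j+1)=\max\bigl(0,(e-j+1)-(e-\rho)\bigr)$, which is precisely the condition under which Lemma \ref{lem:shrink} may be applied with $r=e-\rho$ and $s=s_2$.

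Applying Lemma \ref{lem:shrink} in this way gives
\[
N(\beta,e-\rho)\le q^{(d-1)ns_2+n\max(0,\lfloor\frac{\rho-j+1}{2}\rfloor)}\,N_{s_2}(\beta,e-\rho),
\]
so it remains to bound $N_{s_2}(\beta,e-\rho)$. By definition this counts $\underline{\g}$ with $|\g_1|,\dots,|\g_{d-1}|<|P|/q^{s_2}=q^{l_2}$ and $\|\beta\Psi_i(\underline{\g})\|<q^{-(e-\rho)-(d-1)s_2}$ for all $i\le n$. Since $(d-1)s_2=(d-1)(e-j)-(d-1)(l_2-1)$, the last bound rewrites as $\|\beta\Psi_i(\underline{\g})\|<q^{(d-1)(l_2-1)}/(|P_0|^{d-1}|Q|)$, which is exactly the shape of the defining inequality of $\mathfrak{N}(K)$ in \eqref{eq:NN-K} with $K$ replaced by $(d-1)(l_2-1)$. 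Following the argument of Lemma \ref{lem:N1}: if some $\underline{\g}$ counted by $N_{s_2}(\beta,e-\rho)$ had $\Psi_i(\underline{\g})\ne 0$, then writing $r'=\Psi_i(\underline{\g})$ and $a$ for the polynomial part of $r'\beta$ (both divided through by common factors), one has $|r'|\le q^{(d-1)(l_2-1)}$, because $\Psi_i$ is $(d-1)$-linear in arguments of degree $<l_2$, together with $|r'\beta-a|<q^{(d-1)(l_2-1)}/(|P_0|^{d-1}|Q|)$; hence $\beta\in\mathfrak{N}\bigl((d-1)(l_2-1)\bigr)\subseteq\mathfrak{N}(K)$, where the inclusion uses the first displayed hypothesis $l_2\le 1+K/(d-1)$, i.e.\ $(d-1)(l_2-1)\le K$. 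This contradicts $\beta\notin\mathfrak{N}(K)$. Therefore every $\underline{\g}$ counted by $N_{s_2}(\beta,e-\rho)$ satisfies $\Psi_i(\underline{\g})=0$ for all $i\le n$, and the number of such $\underline{\g}$ with $|\g_1|,\dots,|\g_{d-1}|<q^{l_2}$ is $O_{d,n}(q^{(d-2)nl_2})$ by the same dimension count used in Lemma \ref{lem:N1}.

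Combining the two estimates, $N(\beta,e-\rho)\le c_{d,n}q^{(d-1)ns_2+(d-2)nl_2+n\max(0,\lfloor\frac{\rho-j+1}{2}\rfloor)}$; since $(d-1)s_2+(d-2)l_2=(d-1)(e-j+1)-l_2$ and $|P|^{(d-1)n}=q^{(d-1)n(e-j+1)}$, this is exactly $c_{d,n}q^{-nl_2+n\max(0,\lfloor\frac{\rho-j+1}{2}\rfloor)}|P|^{(d-1)n}$, as required. I do not expect a serious obstacle: the two points that need care are (i) checking that the two hypotheses on $l_2$ map precisely onto the condition $s_2\ge\max(0,\rho-j+1)$ needed to invoke Lemma \ref{lem:shrink} and onto the inequality $(d-1)(l_2-1)\le K$ producing the contradiction with $\beta\notin\mathfrak{N}(K)$; and (ii) remembering that the radius of $\mathfrak{N}(K)$ involves $|P_0|^{d-1}|Q|$ rather than $|P_0|^d$, which is the sole reason the exponent bookkeeping differs from that in Lemma \ref{lem:N1} and is what ultimately produces the extra factor $q^{n\max(0,\lfloor\frac{\rho-j+1}{2}\rfloor)}$.
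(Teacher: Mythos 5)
Your proposal is correct and follows the paper's proof essentially verbatim: apply Lemma \ref{lem:shrink} with $r=e-\rho$ and $s_2=e-j+1-l_2$ (the hypotheses on $l_2$ being exactly what makes this and the subsequent contradiction with $\beta\notin\mathfrak{N}(K)$ work), conclude that $\Psi_i(\underline{\g})=0$ for all $i$, and finish with the $O(q^{(d-2)nl_2})$ dimension count. The only difference is that you spell out the exponent bookkeeping and the degenerate case $e=\rho$, which the paper leaves implicit.
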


\begin{proof}
This time we take  $r=e-\rho$ in  Lemma \ref{lem:shrink} and deduce that 
\begin{align*}
N(\beta,e-\rho)&\leq~
q^{(d-1)ns_2+n\max(0,\lfloor \frac{\rho-j+1}{2}\rfloor)}\\
&\times
\#\left\{ \underline{\g}\in \FF_q[T]^{(d-1)n}: 
\begin{array}{l}
|\g_1|,\dots,|\g_{d-1}|<q^{l_2}\\
\|\beta \Psi_i(\underline{\g}) \|<|Q|^{-1}q^{-(d-1) s_2}  ~(\forall   i\leq n)
\end{array}{}
\right\},
\end{align*} 
for any $s_2\geq \max(0,\rho-j+1)$.
Arguing as in the previous result it is simple to check that we must in fact have 
$\Psi_i(\underline{\g})=0$ for all $1\leq i\leq n$ whenever 
$\beta\not \in \mathfrak{N}(K)$ and $K\geq (d-1)(l_2-1)$. 
But then there are 
$O(   q^{ (d-2) n  l_2})$ possible vectors 
$\underline{\g}\in \FF_q[T]^{(d-1)n}$ that contribute. The statement of the lemma follows.
\end{proof}

In our work we shall take 
\begin{equation}\label{eq:donut}
l_1=
1+\left\lfloor\frac{J}{d-1}\right \rfloor, \quad
l_2=
1+\left\lfloor\frac{K}{d-1}\right \rfloor.
\end{equation}
We need to check that the remaining conditions on $l_1$ and $l_2$  are satisfied in Lemmas \ref{lem:N1} and \ref{lem:N2}. 
To begin with we note that 
$$
J\leq 
\left\lceil \frac{d(e-j)}{2} \right\rceil-1\leq 
\frac{d(e-j)}{2} -\frac{1}{2}
$$
Hence for 
Lemma \ref{lem:N1} to be applicable it suffices to have 
$$
d(e-j) -1
 \leq 2(d-1)(e-j).
$$
But this is equivalent to $0\leq 1+(d-2)(e-j)$ which follows from \eqref{eq:minor-arcs-exist}. 
Next, we note that 
\[K\leq \left\lceil \frac{(e-j)(d-1)+e-\rho}{2} \right\rceil-1  \leq \frac{(e-j)(d-1)+e-\rho}{2} -\frac{1}{2},\] 
so that Lemma \ref{lem:N2} is applicable if 
$$
e-\rho-1\leq (d-1)(e-j-2\max(0,\rho-j+1)).
$$
Thus it suffices to have
\begin{equation}\label{eq:e2}
e-\rho-1\leq (d-1)(e-j)
\end{equation}
and 
\begin{equation}\label{eq:e3}
e-\rho-1\leq (d-1)(e+j-2\rho-2).
\end{equation}
However, \eqref{eq:e2} follows from \eqref{eq:minor-arcs-exist}, so it suffices to assume that  \eqref{eq:e3} holds. 

Inserting
  Lemmas \ref{lem:N1} and \ref{lem:N2} into our  Weyl differencing bounds 
\eqref{eq:weyl1'} and \eqref{eq:weyl2'}, we deduce that  there exists a constant $c_{d,n}>0$ such that
\begin{align*}
|S(\alpha,\beta) - q^{n(e-\rho)} |
&\leq c_{d,n} |P|^n|Q|^n\min\left(
q^{-nl_1/2^{d-1}}, 
q^{   - n(l_2-
n \max(0, \lfloor \frac{\rho-j+1}{2} \rfloor ))/2^{d-2}}
\right)\\
&= c_{d,n} |P|^n|Q|^n/\max\left(
q^{l_1}, 
q^{ 2l_2-
2\max(0, \lfloor \frac{\rho-j+1}{2} \rfloor)} 
\right)^{n/2^{d-1}},
\end{align*}
whenever $(\alpha,\beta)\in \mathfrak{M}(J+1)\setminus \mathfrak{M}(J)\times  
\mathfrak{N}(K+1)\setminus \mathfrak{N}(K)$ and \eqref{eq:e3} holds.
We shall proceed under the assumption that the parameter $l_2$ satisfies
\begin{equation}\label{eq:lK}
l_2-\max\left(0, \left\lfloor \frac{\rho-j+1}{2} \right\rfloor\right)\geq 0.
\end{equation}
This is precisely the circumstance under which our $\beta$-treatment is non-trivial.
Assume that $n>(d-1)2^d$, so that
$2^{d}(d-1)/n<1$. If \eqref{eq:lK} holds we can  invoke the inequality
$\max(A,B)\geq A^{2^{d}(d-1)/n}B^{1-2^{d}(d-1)/n}$, which is valid for any $A,B\geq 1.$ 
Thus it follows that 
\begin{align*}
|S(\alpha,\beta) - q^{n (e-\rho)} |
&\leq c_{d,n} \frac{q^{2(d-1)(2l_2-l_1)
-4(d-1)\max(0, \lfloor \frac{\rho-j+1}{2} \rfloor)}
|P|^n|Q|^n}{
q^{ (l_2-\max(0,\lfloor \frac{\rho-j+1}{2}\rfloor))n/2^{d-2}}}.
\end{align*}

Returning to \eqref{eq:task} we see that
\[\sum_{\substack{
\g\in \FF_q[T]^n\\  
 0<|\g|<q^{e-j+1}\\
 f(\g)=0}}
 \int_{\mathfrak{n}_j} S(\beta) \d \beta
 \leq 
 \sum_{J=-1}^{M-1}
 \sum_{K=e-\rho}^{N-1}
E(J,K)\]
where we recall from  \eqref{eq:MM} and  \eqref{eq:NN} that
$$
M=\left\lceil \frac{d(e-j)}{2} \right\rceil, \quad 
N=\left\lceil \frac{(e-j)(d-1)+e-\rho}{2} \right\rceil,
$$
and 
\[
E(J,K)
=\int_{\mathfrak{M}(J+1)\setminus \mathfrak{M}(J)} \int_{
\mathfrak{N}(K+1)\setminus \mathfrak{N}(K)} 
|S(\alpha,\beta) - q^{n (e-\rho)} |
\d \alpha\d \beta.\]
The measure of all $(\alpha,\beta)$ in the integral is at most $q^{4+2J+2K}|P_0|^{-2d+1}|Q|^{-1}$,
by \eqref{eq:measure} and \eqref{eq:measure-N}.
Let us consider the total contribution 
 \[E_{l_1, l_2} = \sum_{ J= \max( (d-1)  (l_1-1), -1 ) }^{(d-1) l_1 - 1}  \sum_{K = \max ( (d-1) (l_2-1),  e-\rho)}^{ (d-1) l_2 - 1} E_{J,K},\]  
from $J,K$ associated to integers $l_1\geq 0$ and $l_2\geq 1$ via \eqref{eq:donut}. 
Then  
\begin{align*}
E_{l_1, l_2} 
&\ll   \frac{q^{  6(d-1) l_2 } |P|^n |Q|^{n-1}|P_0|^{-2d+1}  
q^{-4(d-1)\max(0, \lfloor \frac{\rho-j+1}{2} \rfloor)}}{
q^{ (l_2-\max(0,\lfloor \frac{\rho-j+1}{2}\rfloor))n/2^{d-2}}}\\
&=q^{\Delta_j-l_2(n/2^{d-2}-6(d-1))+
\max(0,\lfloor \frac{\rho-j+1}{2}\rfloor)\left(n/2^{d-2}-4(d-1)\right)
}, 
\end{align*} 
where we have put
$$
\Delta_j=(e-j)(n-2d+1)+(e-\rho)(n-1)+n.
$$
Because $K \geq e-\rho$, we have 
$$
l_2\geq 1 + \left\lfloor \frac{e-\rho }{d-1} \right\rfloor.
$$
In particular our condition \eqref{eq:lK} is satisfied when 
\begin{equation}\label{eq:e5}
1 + \left\lfloor \frac{e-\rho }{d-1} \right\rfloor\geq 
\max\left(0, \left\lfloor \frac{\rho-j+1}{2} \right\rfloor\right).
\end{equation}
Furthermore, assuming $n > 3(d-1)   2^{d-1}$, the bound is decreasing in $l_2$, so the dominant contribution occurs when $$ l_2 = 1 + \left\lfloor \frac{e-\rho }{d-1} \right\rfloor.$$

Since there are $O(e)$ choices for $l_1$, 
our work has therefore shown that 
\begin{align*}
\sum_{\substack{
\g\in \FF_q[T]^n\\  
0< |\g|<q^{e-j+1}\\
 f(\g)=0}}
 \int_{\mathfrak{n}_j} S(\beta) \d \beta
&\ll e q^{\Delta_j-\Gamma_j},
\end{align*}
where
\begin{align*}
\Gamma_j
=~&
\left(\frac{n}{2^{d-2}}-6(d-1)\right)\left(1+
\left\lfloor \frac{e-\rho }{d-1} \right\rfloor\right) \\
&-\left(
\frac{n}{2^{d-2}}
-
4(d-1)\right)
\max\left(0,\left\lfloor \frac{\rho-j+1}{2}\right\rfloor\right)\\
=~&
\left(\frac{n}{2^{d-2}}-6(d-1)\right)\left(1+
\left\lfloor \frac{e-\rho }{d-1} \right\rfloor-
\max\left(0,\left\lfloor \frac{\rho-j+1}{2}\right\rfloor\right)
\right)\\
& -2(d-1)\max\left(0,\left\lfloor \frac{\rho-j+1}{2}\right\rfloor\right).
\end{align*}
Thus we certainly require \eqref{eq:e5} to hold in order to expect any saving in our minor arc estimate. 

We summarise our argument in the following result. 

\begin{lemma}\label{lem:minor}
Let  $d\geq 3$ and $n>3(d-1)2^{d-1}$.
Assume that $\rho\geq -1$  and 
\begin{equation}\label{eq:final-e}
e\geq 
\max\left( \rho + (d-1)  \left\lfloor \frac{\rho+1}{2}\right\rfloor,  (\rho+1)\left(2+ \frac{1}{d-2} \right) \right).
\end{equation}
Then
$$
N_{\rho}^{\text{minor}}(q,e,f) 
\ll e
 q^{\Delta_0-\Gamma_0}
$$
where
$$
\Delta_0=2e(n-d)-\rho(n-1)+n
$$
and 
$$
\Gamma_0=
\left(\frac{n}{2^{d-2}}-6(d-1)\right)\left(1+
\left\lfloor \frac{e-\rho }{d-1} \right\rfloor-
\left\lfloor \frac{\rho+1}{2}\right\rfloor\right)
 -2(d-1)\left\lfloor \frac{\rho+1}{2}\right\rfloor.
$$
\end{lemma}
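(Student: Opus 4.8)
\emph{Proof of Lemma \ref{lem:minor} — proposal.}

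The plan is to reduce the sum over $j$ in \eqref{eq:minor-j} to the finitely many values $j\in\{0,1,2\}$, bound each resulting term with the per-$j$ minor arc estimate obtained in the paragraphs immediately preceding the lemma, and then observe that the $j=1,2$ contributions are negligible against the $j=0$ one. Concretely, by \eqref{eq:cj} we have $c_j=0$ for $j>2$, so $N_\rho^{\text{minor}}(q,e,f)=\sum_{j=0}^{2}c_j I_j$, where $I_j=\sum_{\g}\int_{\mathfrak{n}_j}S(\beta)\,\d\beta$ (over the usual range $0<|\g|<q^{e-j+1}$, $f(\g)=0$) and $|c_0|=1$, $|c_1|,|c_2|\le q+1$. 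The argument preceding the lemma shows that, whenever $d\ge3$, $n>3(d-1)2^{d-1}$ and both \eqref{eq:e3} and \eqref{eq:e5} hold for the given $j$, one has $|I_j|\ll e\,q^{\Delta_j-\Gamma_j}$, with $\Delta_j=(e-j)(n-2d+1)+(e-\rho)(n-1)+n$ and $\Gamma_j$ as displayed just above the lemma; the implied constant depends only on $d$ and $n$ (it passes through $\int\!\int|S(\alpha,\beta)-q^{n(e-\rho)}|$, so it bounds $|I_j|$). If $\mathfrak{n}_j=\emptyset$ then $I_j=0$ and there is nothing to prove for that $j$.

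First I would verify \eqref{eq:e3} and \eqref{eq:e5} for $j\in\{0,1,2\}$. The right-hand side of \eqref{eq:e3} is increasing in $j$ while that of \eqref{eq:e5} is non-increasing in $j$, so it suffices to treat $j=0$. For $j=0$, using $\rho\ge-1$, condition \eqref{eq:e5} reads $1+\lfloor\tfrac{e-\rho}{d-1}\rfloor\ge\lfloor\tfrac{\rho+1}{2}\rfloor$, which is implied by $e\ge\rho+(d-1)\lfloor\tfrac{\rho+1}{2}\rfloor$; and, clearing denominators and using $d\ge3$, condition \eqref{eq:e3} at $j=0$ is equivalent to $(d-2)e\ge(2d-3)(\rho+1)$, i.e. to $e\ge(\rho+1)\bigl(2+\tfrac1{d-2}\bigr)$. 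Both are supplied by the hypothesis \eqref{eq:final-e}.

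Next I would compare the exponents. Expanding gives $\Delta_j-\Delta_0=-j(n-2d+1)$. Writing $m_j=\max(0,\lfloor\tfrac{\rho-j+1}{2}\rfloor)$ and $C=\tfrac{n}{2^{d-2}}-6(d-1)$, the displayed formula for $\Gamma_j$ rearranges to $\Gamma_j=C\bigl(1+\lfloor\tfrac{e-\rho}{d-1}\rfloor\bigr)-\bigl(C+2(d-1)\bigr)m_j$, so that $\Gamma_j-\Gamma_0=\bigl(\tfrac{n}{2^{d-2}}-4(d-1)\bigr)(m_0-m_j)$. Since $\rho+1\ge\rho\ge\rho-1$ forces $m_0\ge m_1\ge m_2\ge0$, and $n>3(d-1)2^{d-1}$ gives $\tfrac{n}{2^{d-2}}>6(d-1)>4(d-1)$, we conclude $\Gamma_j\ge\Gamma_0$ for $j=1,2$. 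Hence, using $n\ge2d+1$,
\[ \Delta_j-\Gamma_j+1-(\Delta_0-\Gamma_0)\ \le\ -j(n-2d+1)+1\ \le\ -(n-2d)\ \le\ -1\qquad(j=1,2), \]
and therefore
\[ |N_\rho^{\text{minor}}(q,e,f)|\ \le\ |I_0|+(q+1)\bigl(|I_1|+|I_2|\bigr)\ \ll\ e\,q^{\Delta_0-\Gamma_0}+e\,q^{\Delta_0-\Gamma_0-1}\ \ll\ e\,q^{\Delta_0-\Gamma_0}. \]
A direct expansion shows $\Delta_0=2e(n-d)-\rho(n-1)+n$, and $\Gamma_0$ coincides with the expression in the statement since $m_0=\lfloor\tfrac{\rho+1}{2}\rfloor$ when $\rho\ge-1$; this would complete the proof.

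The analytic content — the two Weyl differencing bounds \eqref{eq:weyl1'} and \eqref{eq:weyl2'}, the shrinking estimate of Lemma \ref{new-geometry}, and the resulting per-$j$ minor arc bound — is already in place, so what remains is essentially bookkeeping. The only step needing care is the exponent comparison above: it succeeds precisely because $n$ is large enough that $\tfrac{n}{2^{d-2}}-4(d-1)>0$, so the larger values $m_j$ occurring for small $j$ cause no loss through $\Gamma_j$, and because $n-2d+1\ge2$, so that the extra factor $q$ carried by $c_1$ and $c_2$ is absorbed by the gain $q^{-j(n-2d+1)}$ coming from $\Delta_j$.
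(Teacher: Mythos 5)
Your proposal is correct and follows essentially the same route as the paper's own (much terser) proof: reduce to $j\in\{0,1,2\}$ via \eqref{eq:cj}, check \eqref{eq:e3} and \eqref{eq:e5} from \eqref{eq:final-e}, and absorb the $j=1,2$ terms using $\Delta_j=\Delta_0-j(n-2d+1)$ together with $\Gamma_j\ge\Gamma_0$. Your explicit verification that $\Gamma_j-\Gamma_0=\bigl(\tfrac{n}{2^{d-2}}-4(d-1)\bigr)(m_0-m_j)\ge 0$ is a detail the paper leaves implicit, and it is accurate.
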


\begin{proof}
Recall  \eqref{eq:minor-j} and note that 
$\Delta_j=\Delta_0-j(n-2d+1)$. Hence for the range of $n$ in which we are interested  we deduce from \eqref{eq:cj}  that 
$$
|c_j|q^{\Delta_j-\Gamma_j}\ll q^{\Delta_0-\Gamma_0},
$$
for all $j\geq 0$. Moreover, $\Gamma_0$ takes the value recorded in the statement of the lemma when 
 $\rho\geq -1$ and the condition 
 \eqref{eq:final-e} on $e$ is enough to ensure that
  \eqref{eq:e3} 
  and  \eqref{eq:e5}
  both hold 
   for every $j\in \{0,1,2\}$. (For  \eqref{eq:e3} we note that  it suffices to have 
$
e(d-2)\geq (\rho+1)(2d-3).$)  
This  completes the proof.
\end{proof}

\subsection{Deduction of Theorem \ref{t:BV}}
We assume that 
 $n>(2d-1) 2^{d-1}$.  
We revisit the argument deployed in \cite{BV} to establish the irreducibility 
 and dimension of $\mathcal{M}_{0,0}(X,e)$. 
 This is based on a counting argument over a finite field $\FF_q$ whose characteristic is greater than the degree $d$ of the non-singular form $f\in \FF_q[x_1,\dots,x_n]$ that defines $X$.
 According to \cite[Eq.~(3.3)]{BV},
 in order to deduce that
 $\mathcal{M}_{0,0}(X,e)$ is irreducible and of the expected dimension it suffices to show that 
 \begin{equation}\label{eq:goat}
 \lim_{q\to \infty}  q^{-(n-d)e-n+1} \hat N(q,e,f)\leq 1,
 \end{equation}
 where $\hat N(q,e,f)$ is the number of $\g\in \FF_q[T]^n$ such that 
 $|\g|<q^{e+1}$ and $f(\g)=0$. 

 We have 
 $$
 \hat N(q,e,f)=\int_{\TT}S_{\text{BV}}(\alpha) \d\alpha,
 $$
 where
$$
S_{\text{BV}}(\alpha)
=\sum_{\substack{\g\in \FF_q[T]^n\\ |\g|<q^{e+1}}} \psi(\alpha f(\g))
 =
 q^{-n(e-\rho)} S(\alpha,0),
 $$
 in the notation of \eqref{def:Sab}, with $j=0$. 
Take $j=0$ in the major arcs  
$\mathfrak{M}(J)$ 
that were 
defined in 
\eqref{eq:MM-J}.  A straightforward calculation shows that the contribution from 
the major arc around $0$ is 
$$
\int_{\mathfrak{M}(0)}S_{\text{BV}}(\alpha) \d\alpha=
\sum_{\substack{\g\in \FF_q[T]^n\\ |\g|<q^{e+1}}} 
\int_{|\theta|<q^{-de}}
\psi(\theta f(\g)) 
 \d\theta = q^{ne-de}\left(q^{n-1}+O(q^{n/2})\right).
$$
In order to complete the proof of \eqref{eq:goat} it therefore suffices to show that 
$$
\lim_{q\to \infty} q^{-(n-d)e-n+1}\sum_{J=0}^{M-1} \int_{\mathfrak{M}(J+1)\setminus \mathfrak{M}(J)} |S_{\text{BV}}(\alpha)|\d\alpha <1
$$
where 
$M=\lceil \frac{de}{2}\rceil$
is given by \eqref{eq:MM}. 
To do this we may 
apply our previous work.
Thus it follows from
\eqref{eq:weyl1'} and 
 Lemma \ref{lem:N1}
that
$$
S_{\text{BV}}(\alpha)\ll |P|^n q^{-nl_1/2^{d-1}},
$$
if $\alpha\not\in \mathfrak{M}(J)$ and  $l_1$ is any integer such that 
$l_1\leq 1+J/(d-1)$ and 
$l_1\leq e+1$.
 The choice $l_1=1+\lfloor J/(d-1)\rfloor$
 is acceptable since $J\leq \lceil \frac{de}{2}\rceil-1\leq \frac{de-1}{2}
 $, whence
$$
l_1\leq 1+\frac{de-1}{2(d-1)
}
=1+e,
$$
for   $d\geq 3$. Since $J\geq 0$ we are clearly only interested in integers $l_1\geq 1$.  
Appealing to \eqref{eq:measure} to estimate the volume of $\mathfrak{M}(J+1)$, we deduce that for given $l_1\geq 1$ the  total 
associated 
contribution is
\begin{align*}
\sum_{J=(d-1)(l_1-1)}^{(d-1)l_1-1} \int_{\mathfrak{M}(J+1)\setminus \mathfrak{M}(J)} |S_{\text{BV}}(\alpha)|\d\alpha 
&\ll 
\sum_{J=(d-1)(l_1-1)}^{(d-1)l_1-1} q^{2J+2-de} . 
|P|^n q^{-nl_1/2^{d-1}}\\
&\ll 
q^{-de+n(e+1) +(2(d-1)-n/2^{d-1})l_1}.
\end{align*}
 This is decreasing with $l_1$
 if  $n> (d-1) 2^{d}$ 
  and we may therefore sum over $l_1\geq 1$ to finally deduce that 
 $$
 q^{-(n-d)e-n+1}\sum_{J=0}^{M-1} \int_{\mathfrak{M}(J+1)\setminus \mathfrak{M}(J)} |S_{\text{BV}}(\alpha)|\d\alpha
 \ll  q^{1+2(d-1)-n/2^{d-1}}.
$$ 
The exponent of $q$ is negative if 
 $n> (2d-1) 2^{d-1}$, which thereby concludes the proof  of \eqref{eq:goat}, whence 
 $\mathcal M_{0,0}(X,e)$ is indeed irreducible and of the expected dimension. It follows from the same method used in \cite[p.~2] {HRS} that  $\mathcal M_{0,0}(X,e)$ is locally a complete intersection. Indeed, since  $\mathcal M_{0,0}(X,e)$ is locally the intersection of $de+1$ equations in $\mathcal {M}_{0,0}(\PP^{n-1},e)$, a smooth stack of dimension $ne -4$, it is a locally complete intersection if and only if its dimension is $(n-d)e + n-5$.

\subsection{Deduction of Theorem \ref{t:1}}

Assume that $d\geq 3$, $n>3(d-1)2^{d-1}$,  $\rho\geq -1$, and $e \geq (\rho+1)\left(2+ \frac{1}{d-2} \right)$. In particular, this implies that $e \geq \rho$, which is needed for Lemma \ref{lem:savepoint}.  In view of Theorem \ref{t:BV}, the stated bound is trivial unless  $1 + \lfloor \frac{e-\rho} {d-1} \rfloor -  \left\lfloor \frac{\rho+1}{2}\right\rfloor> 0 $, so we may assume that 
$ \lfloor \frac{e-\rho} {d-1} \rfloor -  \left\lfloor \frac{\rho+1}{2}\right\rfloor \geq 0 $ 
and thus $e \geq \rho + (d-1) \left\lfloor \frac{\rho+1}{2}\right\rfloor $. Hence we may assume that \eqref{eq:final-e} holds. 
 
Combining 
Lemmas \ref{lem:savepoint} and 
\ref{lem:minor} we deduce that 
\begin{equation}\label{eq:count}
\#Z_\rho(\FF_q)\ll
 e
q^{e(n-d)+n-5-\min(\mu_1(n),\mu_2(n))},
\end{equation}
with 
\begin{align*} 
\mu_1(n)=\left(\frac{n}{2^{d-2}}-6(d-1)\right)
\left(1+D
- \left\lfloor \frac{\rho+1}{2}\right\rfloor\right)
-2(d-1)\left\lfloor \frac{\rho+1}{2}\right\rfloor
\end{align*}
and 
$$
\mu_2(n)=(1+D)(n-d-1)-de+e+1.
$$
 Here we recall that  $D$ is given by \eqref{eq:D} as $\lfloor \frac{e-\rho} {d-1} \rfloor$.  
 
 We claim that $\mu_1(n)\leq \mu_2(n)$. They are both increasing affine functions of $n$, with $\mu_1(n)$ of lesser slope than $\mu_2(n)$. Hence to check that $\mu_1(n)$ is the minimum, it suffices to check that $\mu_2(n) \geq 0$  and $\mu_1(n) \leq 0$ when $n = 3(d-1) 2^{d-1}$. In other words, we must show that
  \[ 
  3 (d-1) 2^{d-1} \geq  d+1 + \frac{ e (d-1) -1 } {1 + D }. 
  \] 
 To do this, observe that because $e \geq  \rho + (d-1) \left\lfloor \frac{\rho+1}{2}\right \rfloor$ we have $e \geq \frac{d+1}{2} \rho$,   so that 
 \[ 1 + D \geq   \frac{e +1 - \rho }{d-1}   \geq \frac{ e + 1 - \frac{2}{d+1} e } {d-1} \geq \frac{e}{d+1} .\]
  Thus 
 \[  d+1 + \frac{ e (d-1) -1 } {1 + D } \leq  d+ 1 + \frac{ e(d-1)}{e/(d+1)} = d (d+1), \]
 so it suffices to check \[ 3(d-1) 2^{d-1}  \geq d(d+1).\] 
 But it is clear that 
 this 
  holds for all $d\geq 3$, whence  $\mu_2(n) \geq  \mu_1(n)$.

By Lang--Weil \cite{LW}, it now follows from \eqref{eq:count}  that 
$$\dim Z_{\rho}\leq  e(n-d)+n-5-\mu_1(n)$$ 
for any smooth hypersurface defined over a finite field. For a general hypersurface, we can spread it out to a family defined over a ring finitely-generated over $\mathbb Z$. The dimension of $Z_\rho$ in this family is manifestly constant on some open subset of the spectrum of this ring, which must contain a finite-field valued point, so  $\dim Z_{\rho}$ is at most $e(n-d)+n-5-\mu_1(n)$ for the generic point and thus for the original hypersurface. This completes the proof of Theorem~\ref{t:1}.

\subsection{Deduction of Theorem \ref{t:starr}}

We consider the effect of taking  $\rho=-1$ in Theorem \ref{t:1}. 
Clearly \eqref{eq:e-condition} is equivalent to $e\geq 0$ and can be ignored.  
Note that $Z_{-1}$ contains the singular locus of $\mathcal{M}_{0,0}(X,e)$
by \cite[Thm.~2.6]{Debarre}. Thus the codimension of the singular locus is at least $\dim \mathcal M_{0,0}(X,e)- \dim Z_{-1}$. Theorem~\ref{t:starr} therefore follows from applying Theorem \ref{t:BV} to calculate  $\dim \mathcal M_{0,0}(X,e)$ and Theorem \ref{t:1} to bound  $\dim Z_{-1}$.

Because the lower bound for the codimension of the singular locus is strictly positive, the moduli space is generically smooth. Any generically smooth locally complete intersection scheme is reduced, which thereby completes the proof of Theorem \ref{t:BV}.

\section{Peyre's freedom counting function}\label{s:peyre}

In this section we prove the asymptotic formula in Theorem \ref{t:P}
for the counting function \eqref{eq:NX},
by piecing together our work above and 
the main results in Lee's thesis \cite{lee}.    We have 
\begin{equation}\label{free-not}
N_X^{\ve\text{-free}}(B)=N_X(B)-E_\ve(B),
\end{equation}
where $E_\ve(B)$ counts the number of $x\in X(\FF_q(T))$ with $H_{\omega_V^{-1}}(x)\leq q^B$
such that $\ell(x)<\ve$.

Let us begin by studying $N_X(B)$. As usual we suppose that $X$ is defined by a non-singular form 
$f\in \FF_q[x_1,\dots,x_n]$ of degree $d\geq 3$.
It follows from the proof of part (1) of Proposition~\ref{unfree-bound} that 
$$
N_X(B)
=\frac{1}{q-1} \#\left\{
\g\in \FF_q[T]^n: 
\begin{array}{l}
\gcd(g_1,\dots,g_n)=1\\
|\g|^{n-d}<q^{B+1}, ~f(\g)=0
\end{array}
\right\}.
$$
Using  the M\"obius function to detect the coprimality condition we obtain 
\begin{align*}
N_X(B)
&=\frac{1}{q-1}
\sum_{\substack{k\in \FF_q[T]\\ 
\text{$k$ monic}}} \mu(k) \#\left\{
\g\in \FF_q[T]^n: 0< |k \g|^{n-d}<q^{B+1}, ~ f(\g)=0\right\}\\
&=\frac{1}{q-1}
\sum_{j\geq  0}
\sum_{\substack{k\in \FF_q[T]\\ |k|=q^j\\
\text{$k$ monic}}} \mu(k) 
\#\left\{
\g\in \FF_q[T]^n:  
\begin{array}{l}
0<|\g|^{n-d}<q^{B+1-j(n-d)}\\ f(\g)=0
\end{array}
\right\}.
\end{align*}
Put
$
m=n-(d-1)2^d$ and assume that $m>0$. 
Then, on 
appealing to Lee's thesis  \cite[Thm.~4.1.1]{lee}, 
it follows that 
\begin{equation}
\label{eq:lee}
\begin{split}
\#\big\{\g\in \FF_q[T]^n: &~ 0<|\g|^{n-d}<q^{R+1}, ~f(\g)=0\big\}
\\&=
q^{R}\left(c_f +O(q^{-mR/(2^{d+1}(d-1)(n-d))})\right),
\end{split}
\end{equation}
for any $R>0$, 
where $c_f$ is the usual product of singular series and singular integral.
Using  \eqref{eq:elementary} to handle the sum over $j$ and $k$, it now follows from \eqref{eq:lee} 
that there exists $\delta>0$ such that 
\begin{align*}
N_X(B)
&=\frac{c_f}{(q-1)\zeta_{\FF_q(T)}(n-d)}
q^B
+O\left(q^{(1-\delta)B}\right),
\end{align*}
where 
$\zeta_{\FF_q(T)}(s)=(1-q^{1-s})^{-1}$ is the rational zeta function. 
Arguing along standard lines (as in Peyre \cite[\S 5.4]{peyre-duke}, for example), one readily confirms that this agrees with the Batyrev--Manin--Peyre prediction for the hypersurface $X$.

It remains to produce an upper bound for the quantity $E_\ve(B)$ in \eqref{free-not}.
Let $x\in X(\FF_q(T))$ and suppose that it defines 
a map $c:\PP^1\to X$ of degree $e$.
Then it follows from 
 \cite[Notation 5.7]{peyre-freedom} that 
$$
\ell(x)=\frac{(n-1)\rho}{e(n-d)}
$$
if and only if $c$  is $\rho$-free but not $(\rho+1)$-free. 
(In particular,  Remark \ref{rem:1.4} implies that $\ell(x)\in [0,1]$.)
We deduce that  $E_\ve(B)$ is at most the number of rational maps from $\PP^1\to X$ with degree at most $B/(n-d)$
which are not $\rho$-free, with 
\begin{equation}\label{eq:rho-eps}
\rho=\left\lfloor \frac{\ve B}{n-1}\right \rfloor +2.
\end{equation}
We may therefore appeal to the proof of Proposition~\ref{unfree-bound}(2) to estimate this quantity, finding that
$$
E_\ve(B)\leq \frac{N_\rho(q,B/(n-d),f)q^{\rho(n-1)-B} -N(q,B/(n-d),f)}{(q-1)^2},
$$
with $\rho$ given by \eqref{eq:rho-eps}.
In what follows 
it will be convenient to set $e=B/(n-d)$ and to assume that $e\in \NN$.
All of the implied constants that follow are allowed to depend on $q$ and $f$, but not on $e$ or $\rho$.
We seek conditions on $n$ and $\rho$ under which we can deduce that there exists $\delta>0$ such that 
$E_\ve(B)=O(q^{(1-\delta)e(n-d)})$.

First we improve our treatment of Lemma \ref{lem:pebble} slightly by revisiting the 
argument \eqref{eq:revisit}. Since we no longer care about a dependence on the finite field, rather than invoking a trivial bound  we may apply \eqref{eq:lee} to deduce that 
$
N(q,u-\ell,f)\ll q^{(u-\ell)(n-d)}$
if $n>(d-1)2^d$. But then \eqref{eq:revisit} can be replaced by the bound 
\begin{align*}
E_j
&\ll_f
\sum_{0\leq u\leq e-j} q^{u(n-2d+1)}
\sum_{
\ell \geq D+1+j-e+u} q^{-\ell(n-2d)}\\
&\ll_f 
q^{
(e-j)(n-2d+1)
-(D+1)(n-2d)},
\end{align*}
where $D$ is given by \eqref{eq:D},
whence
\begin{align*}
q^{(n-1)(e-\rho)}
\sum_{j\geq 0}|c_j|
E_j
&\ll_f
q^{2e(n-d)-\rho(n-1)-(D+1)(n-2d)}\\
&\ll_f 
q^{2e(n-d)-\rho(n-1)-(e-\rho)(n-2d)/(d-1)}. 
\end{align*}
It now  follows from \eqref{eq:full-integral} and our modified version of Lemma \ref{lem:pebble} that 
$$
E_\ve(B)\ll q^{e(n-d)-(e-\rho)(n-2d)/(d-1)}+
q^{-e(n-d)+\rho(n-1)}N_{\rho}^{\text{minor}}(q,e,f),
$$
provided that 
$e\geq \rho$. 
Note that $\Gamma_0=
\gamma_0+O_{d,n}(1)
$,
with 
$$
\gamma_0=\left(\frac{n}{2^{d-2}}-6(d-1)\right)\left(\frac{e-\rho}{d-1}-\frac{\rho}{2}\right)
-(d-1)\rho.
$$
Appealing now to Lemma \ref{lem:minor} we therefore  deduce that 
$$
E_\ve(B)\ll 
q^{e(n-d)-(e-\rho)(n-2d)/(d-1)}+
e q^{e(n-d) -\gamma_0}
$$
if  \eqref{eq:final-e} holds.

Recall that $n>3(d-1)2^{d-1}$. 
Then $n/2^{d-2}-6(d-1)\geq 2^{-d+2}$ and we can ensure that 
$\gamma_0\geq \delta e$ for a small parameter $\delta>0$ (that depends only on $d$)
provided that 
\begin{equation}\label{eq:final-hyp}
e\geq (d-1)^2 2^{d-1}\rho.
\end{equation}
This is also enough to ensure 
that $(e-\rho)(n-2d)/(d-1)\geq \delta e$.
This inequality is clearly much stronger than \eqref{eq:final-e}.
The statement of Theorem~\ref{t:P} now follows on taking  $e=B/(n-d)$ and noting that the hypothesis on $\ve$ in the theorem is enough to ensure that 
\eqref{eq:final-hyp} holds when $\rho$
is given by
\eqref{eq:rho-eps} and $B$ is sufficiently large.


\begin{thebibliography}{999} 

\bibitem{sawin}
T.D. Browning and W. Sawin,
A geometric version of the circle method (with W. Sawin)
{\em Annals of Math.} {\bf 191} (2020), 893--948.

\bibitem{sawin3}
T.D. Browning and W. Sawin,
Free rational points on smooth hypersurfaces.
{\em Comment.\ Math.\ Helv.} {\bf 95} (2020), 635--659.


\bibitem{BV}
T.D. Browning and P. Vishe,
Cubic hypersurfaces over $\FF_q(t)$. {\em Geom.\ Funct.\ Anal.} 
{\bf 25} (2015), 671--732.

\bibitem{BV'}
T.D. Browning and P. Vishe,
Rational curves on smooth hypersurfaces of low degree.
{\em Algebra \& Number Theory} {\bf 11} (2017), 1657--1675. 

\bibitem{campana}
F. Campana,
Connexit\'e rationnelle des vari\'et\'es de Fano. 
{\em Ann. Sci. \'Ecole Norm. Sup.} {\bf 25} (1992), 539--545.

\bibitem{Debarre}
O. Debarre,
{\em Higher-dimensional algebraic geometry}.
Springer-Verlag, 2001.

\bibitem{3264}
D. Eisenbud and J. Harris,
\textit{$3264$ and all that}.
Cambridge University Press, 2016.

\bibitem{FMT89}
{J. Franke, Y.I. Manin and Y. Tschinkel}, {Rational
points of bounded height on Fano varieties}. {\em Invent. Math.} {\bf95} (1989), 421--435.


\bibitem{HRS}
J. Harris, M. Roth and J. Starr, Rational curves on hypersurfaces of low degree. {\em J.\ reine angew.\ Math.} {\bf 571} (2004), 73--106.

\bibitem{hart}
{R. Hartshorne}, {\em Algebraic geometry}.
Springer-Verlag, 1977.

\bibitem{kollar}
J. Koll\'ar, {\em Rational curves on rational varieties}. Springer-Verlag, Berlin, 1996.


\bibitem{KMM}
J. Koll\'ar, Y. Miyaoka and S. Mori, Rationally connected varieties. {\em J. Algebraic Geom.} {\bf 1} (1992),  429--448.

\bibitem{KMM'}
J. Koll\'ar, Y. Miyaoka and S. Mori,
Rational connectedness and boundedness of Fano manifolds.
{\em J.\ Differential Geom.} {\bf 36} (1992), 765--779.


\bibitem{LW} {S. Lang and A. Weil},
{Number of points of varieties in finite fields}.
{\em Amer. J. Math.} {\bf76} (1954), 819--827.


\bibitem{lee}
S.A. Lee, 
{\em On the applications of the circle method to function fields, and related topics. }
Ph.D. thesis, University of Bristol, 2013.


\bibitem{Mustata}
M. Mustat\v{a}, Jet schemes of locally complete intersection
canonical singularities. {\em Invent.\ Math.} {\bf 145} (2001), 397--424.

\bibitem{peyre-duke}
E. Peyre,
Hauteurs et nombres de Tamagawa sur les vari\'et\'es de Fano.
{\em Duke Math. J.} {\bf 79} (1995), 101--218.


\bibitem{peyre-freedom}
E. Peyre, 
Libert\'e et accumulation. {\em 
Documenta Math.} {\bf 22} (2017) 1615--1659.

\bibitem{RY}
E. Riedl and D. Yang,  Kontsevich spaces of rational curves on Fano hypersurfaces. {\em J. reine angew. Math.} {\bf 748} (2019), 207--225.


\bibitem{Starr}
J. Starr, The Kodaira dimension of spaces of rational curves on low degree hypersurfaces. 
\texttt{(arXiv:math/0305432)}


\end{thebibliography}
\end{document}